\theoremstyle{plain}
\newtheorem{thm}{Theorem}
\newtheorem{cor}[thm]{Corollary}
\newtheorem{prop}[thm]{{\bf Proposition}}
\newtheorem{lem}[thm]{{\bf Lemma}}
\numberwithin{thm}{section}
\newcounter{hyp_counter}
\theoremstyle{definition}
\newtheorem{definition}[thm]{Definition}
\theoremstyle{remark}
\newtheorem{rem}{Remark}
\newcommand{\Aut}{\operatorname{Aut}}
\newcommand{\circled}[1]{\raisebox{.5pt}{\textcircled{\raisebox{-.9pt} {#1}}}}
\newcommand{\diag}{\operatorname{diag}}
\newcommand{\E}{\mathbb{E}}
\newcommand{\EP}{h}
\newcommand{\EPIA}{h_\text{FA}}
\newcommand{\grass}{\mathcal G}
\newcommand{\Id}{\operatorname{Id}}
\newcommand{\Diff}{\operatorname{Diff}}
\newcommand{\dKL}{d_\text{KL}}
\newcommand{\GL}{\operatorname{GL}}
\newcommand{\Gr}{\operatorname{Gr}}
\newcommand{\Homeo}{\operatorname{Homeo}}
\newcommand{\Jac}{\operatorname{Jac}}
\newcommand{\lin}{\operatorname{lin}}
\newcommand{\MF}{\operatorname{MF}}
\newcommand{\N}{\mathbb{N}}
\newcommand{\PF}{\operatorname{PF}}
\newcommand{\proj}{\operatorname{proj}}
\newcommand{\PW}{\text{PW}}
\newcommand{\R}{\mathbb{R}}
\newcommand{\RP}{\mathbb{R}\operatorname{P}}
\newcommand{\SL}{\operatorname{SL}}
\newcommand{\SLpm}{\operatorname{SL}^\pm}
\newcommand{\SO}{\operatorname{SO}}
\newcommand{\supp}{\operatorname{supp}}
\newcommand{\sval}{s}
\newcommand{\Svals}{S}
\newcommand{\V}{\mathbf V}
\newcommand{\vol}{\operatorname{vol}}
\newcommand{\var}{\operatorname{Var}}
\newcommand{\wt}[1]{\widetilde{#1}}
\newcommand{\abs}[1]{\left| #1\right|}
\newcommand{\mc}[1]{\mathcal{#1}}
\def\blfootnote{\xdef\@thefnmark{}\@footnotetext}
\title[Effective Gaps Between Singular Values]{Effective Gaps between singular 
values of non-stationary matrix 
products subject to non-degenerate noise}
\author{Sam Bednarski}
\address{Department of Mathematics and Statistics, University of Victoria, Victoria, Canada}
\email{bednarskisam(a)gmail.com}
\author{Jonathan DeWitt}
\address{Department of Mathematics, The Pennsylvania State University, State College, PA, USA}
\email{dewitt@psu.edu}
\author{Anthony Quas}
\address{Department of Mathematics and Statistics, University of Victoria, Victoria, Canada}
\email{aquas(a)uvic.ca}
\date{\today}
\begin{document}

\begin{abstract}
We study the singular values and Lyapunov exponents of non-stationary 
random matrix products subject to small, absolutely continuous, additive 
noise. Consider a fixed sequence of matrices of bounded norm. Independently 
perturb the matrices by additive noise distributed according to Lebesgue 
measure on matrices with norm less than $\epsilon$. Then the gaps between 
the logarithms of the singular values of the random product of $n$ of these 
matrices are all of order at least $\epsilon^2n$, both in expectation; and almost surely 
for large $n$.

To prove this, we develop recent work of Gorodetski and Kleptsyn \cite{gorodetski2023nonstationary}.
That paper gives a very flexible method, based on relative entropy,
for showing that a non-stationary product of matrices in $\SL(d,\R)$ has a 
strictly positive Lyapunov exponent. We extend their work in two ways, firstly
by making the estimates quantitative in the context of absolutely continuous distributions,
giving the universal estimates described above; and secondly by developing a fibered version 
of their methods, working on flag bundles instead of the projective space to estimate 
gaps between arbitrary consecutive exponents. Our methods retain much of the flexibility
of those of Gorodetski and Kleptsyn, and we hope that they will find application in other related
problems.
\end{abstract}

\maketitle

\tableofcontents

\section{Introduction}
In this paper, we study non-stationary products of matrices subject to small, 
robust noise. Our main result says, informally, that if the noise is of size 
$\epsilon$, and we multiply $n$ matrices together, then the logarithm of the gaps 
between the singular values of 
the matrices will be of order at least $n\epsilon^2$. 
Our main result is the following. Write $s_k(A)$ for the $k$th largest singular value of the matrix $A$.

\begin{thm}\label{thm:main_thm_simple}
Let $\mu$ be an absolutely continuous probability measure on 
$M_{d\times d}(\R)$, the space of $d$ by $d$ matrices, with continuous 
density $\phi$ such that for some $C>0$, 
$\phi(A)\le C/\|A\|^{d^2+1}$ for all $A\in M_{d\times d}(\R)$. 
There exists $C_{\phi}$ with the following property.
For any $M\ge 1$, any $\epsilon\le 1$ and any sequence of matrices $(A_n)$ 
with $\|A_n\|\le M$, let $(E_n)$ be a sequence of independent identically distributed 
matrix random variables, with distribution $\mu$. If $B^n$ is the matrix
random variable $(A_n+\epsilon E_n)\cdots (A_1+\epsilon E_1)$, then
almost surely
\begin{equation}
\liminf_{n\to \infty} \tfrac 1n\big(\log \sval_k(B^n)-
\log \sval_{k+1}(B^n)\big)\ge \epsilon^2 C_\phi M^{-2}.
\end{equation}
\end{thm}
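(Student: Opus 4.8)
The plan is to reduce the almost sure statement to a single effective one-step estimate for the conditional drift of the gap, and then run a strong law of large numbers for martingale differences. \textbf{Reductions:} since $\log\sval_k(B^n)-\log\sval_{k+1}(B^n)$ is unchanged when every matrix in the product is rescaled, replacing $A_m$ by $A_m/M$ (of norm $\le 1$) and $\epsilon$ by $\epsilon/M$ reduces us to $M=1$ at the cost of proving the bound $C_\phi\epsilon^2$. Next, since the singular values of $\Lambda^k B$ are the decreasing rearrangement of the products $\sval_{i_1}(B)\cdots\sval_{i_k}(B)$, one has $\sval_k(B)/\sval_{k+1}(B)=\sval_1(\Lambda^kB)/\sval_2(\Lambda^kB)$, equivalently $\log\sval_k-\log\sval_{k+1}=2\log\|\Lambda^kB^n\|-\log\|\Lambda^{k-1}B^n\|-\log\|\Lambda^{k+1}B^n\|$ (with $\|\Lambda^0B^n\|=1$), so it suffices to control the top gap of the cocycle $m\mapsto\Lambda^k(A_m+\epsilon E_m)$. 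As this cocycle is not literally ``bounded plus small absolutely continuous noise,'' I would isolate the properties actually used — a uniformly bounded deterministic part, a noise law with density bounded below on a ball of definite radius and with controlled tails, and enough integrability of the relevant logarithms — into an abstract hypothesis, and verify that the $\Lambda^k$ push-forward satisfies it with constants depending only on $\phi$ and $d$; the tail bound $\phi(A)\le C\|A\|^{-(d^2+1)}$ is exactly what makes that integrability survive the exterior power. Finally, writing the singular value decomposition $B^n=U_nD_nV_n^T$ and using that singular values are unchanged by right multiplication by an orthogonal matrix (an operation preserving the noise class), the conditional law of the gap of $B^{n+1}$ given the past depends only on $A_{n+1}$, on $D_n$, and on $U_n$ through the law of $E_{n+1}U_n$.

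\textbf{Martingale reduction and the drift estimate.} Set $\Delta_n:=\log\sval_1(\Lambda^kB^n)-\log\sval_2(\Lambda^kB^n)\ge 0$. The goal is to produce a bounded $\mathcal F_n$-measurable corrector $\Psi_n$ (a deterministic function of $U_n$ and of the fixed tail $(A_m)_{m>n}$) such that $\Phi_n:=\Delta_n+\Psi_n$ has conditional drift $\E[\Phi_{n+1}-\Phi_n\mid\mathcal F_n]\ge C_\phi\epsilon^2$ for every $n$. Granting this, decompose $\Phi_n=\Phi_0+\sum_{m=1}^n\E[\Phi_m-\Phi_{m-1}\mid\mathcal F_{m-1}]+M_n$ with $M_n$ a martingale; the tail bound on $\phi$ gives a uniform bound on the conditional second moments of the increments (the logarithms of the singular values are large only on sets of small, controlled probability), so $M_n/n\to0$ almost surely by the $L^2$ strong law for martingale differences, and since $\Psi_n$ is bounded, $\liminf_n\tfrac1n\Delta_n=\liminf_n\tfrac1n\Phi_n\ge C_\phi\epsilon^2$, which is the theorem. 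The drift estimate comes from a second-order Taylor expansion in $\epsilon$ of $\E_E[\log\sval_1((A+\epsilon E)D)-\log\sval_2((A+\epsilon E)D)]$ for fixed $A$ of norm $\le1$ and fixed positive diagonal $D$: the $\epsilon^0$ term is the gap of $AD$; the $\epsilon^1$ term is a linear functional of $\E[E]$ of indefinite sign — this is exactly what the corrector $\Psi_n$ is designed to cancel, $\Psi_n$ being the bounded solution of a Poisson equation for the frame chain $(U_n)$, which has a spectral gap of size $\Theta(\epsilon^2)$ on the compact frame manifold supplied by the noise; and the $\epsilon^2$ term, obtained by integrating the Hessian of $\log\sval_1-\log\sval_2$ against $\mu$, is a strictly positive quadratic form in the covariance of $\mu$, bounded below by a positive multiple of $\epsilon^2$. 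It is here that non-degeneracy enters: a measure supported on a proper algebraic subset could make this $\epsilon^2$ coefficient vanish.

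\textbf{Main obstacle.} The crux is making the $\epsilon^2$ lower bound on the drift uniform over all configurations, especially the degenerate ones: when $D_n$ is extremely ill-conditioned (a large pre-existing gap) or when $A_{n+1}$ is nearly singular and happens to be aligned with the random top singular direction $U_ne_1$, a single step can shrink $\Delta_n$ by an amount of order $\log(1/\epsilon)$, or even by a full $\Delta_n$. The reason this does not destroy the lower bound — and the reason the corrector can be controlled — is that the last increment of noise makes $U_ne_1$ absolutely continuous on the sphere with density bounded in terms of $\phi$ and $\epsilon$, so the fixed matrix $A_{n+1}$ can be badly aligned with $U_ne_1$ only on an event whose probability decays exponentially in $\Delta_n$; quantifying this, and showing the frame chain leaves bad configurations quickly, is the bulk of the work. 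Keeping every constant dependent only on $\phi$ and $d$, uniform over the adversarial sequence $(A_n)$, over $D$, and simultaneously over the three exterior powers $\Lambda^{k-1},\Lambda^k,\Lambda^{k+1}$, is the remaining technical burden; a secondary obstacle is the quantitative singular value and singular subspace perturbation theory for $(A+\epsilon E)D$ versus $AD$ with errors that stay integrable against the heavy-tailed $\mu$ — which is what the exponent $d^2+1$ is engineered to allow.
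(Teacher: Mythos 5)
Your proposal takes a genuinely different route from the paper (drift of the gap of the top exterior-power cocycle, cancelled by a Poisson-equation corrector, plus a martingale SLLN, versus the paper's fiberwise Furstenberg entropy, its superadditivity under convolution, and an average over the whole flag manifold), but as written it has a genuine gap: the one-step conditional drift bound $\E[\Phi_{n+1}-\Phi_n\mid\mathcal F_n]\ge C_\phi\epsilon^2$ is asserted rather than proved, and your own closing paragraph concedes that establishing it uniformly over degenerate configurations ``is the bulk of the work.'' That bound \emph{is} the theorem; everything else in your outline (the reduction to $M=1$, the identity $\log s_k-\log s_{k+1}=2\log\|\Lambda^kB\|-\log\|\Lambda^{k-1}B\|-\log\|\Lambda^{k+1}B\|$, the $L^2$ SLLN for martingale differences) is routine by comparison. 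Two specific points make me doubt the drift estimate can be closed along the lines you sketch. First, the Taylor expansion of $\E_E[\log s_1((A+\epsilon E)D)-\log s_2((A+\epsilon E)D)]$ in $\epsilon$ is invalid exactly in the regimes that matter: $\log s_1-\log s_2$ is not differentiable where the top two singular values of $AD$ coincide, and since only $\|A\|\le M$ is assumed, $A$ may be singular and may nearly annihilate the current top singular direction $U_ne_1$, in which case a single step loses $\Theta(\log(1/\epsilon))$. To absorb that into a $+C\epsilon^2$ drift you need the bad alignment to have conditional probability $o(\epsilon^2/\log(1/\epsilon))$; but the density of $U_ne_1$ produced by one step of size-$\epsilon$ noise is only bounded by a negative power of $\epsilon$, and the alignment event has measure a positive power of $\epsilon$, so the product is not obviously small enough --- this needs an actual computation, not the qualitative statement you give. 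Second, the corrector: the frame chain is time-inhomogeneous with an adversarial deterministic part, so there is no stationary measure; a bounded solution of the Poisson equation for a source of size $\Theta(\epsilon)$ on a chain whose mixing you only claim at rate $\Theta(\epsilon^2)$ would generically have size $\Theta(\epsilon^{-1})$, and its own one-step increments re-enter the drift computation and must be shown not to swamp the $\epsilon^2$ gain; none of this is carried out.

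It is worth seeing how the paper avoids both problems. Instead of tracking the actual singular directions, Proposition \ref{prop:gapest} shows that \emph{averaging} the miniflag volume distortion over Haar measure on $\PF(k-1,k+1)$ reproduces $\log(s_k/s_{k+1})$ up to an additive constant $C_d$ independent of the matrix; the degenerate alignments that derail your pointwise drift contribute only through the integrable functions $\log\alpha_j$ of Lemma \ref{lem:integrable} and are paid for once, not per step. And instead of a drift that needs a first-order cancellation, the paper uses the fiber-averaged push-forward entropy, which is superadditive under convolution (Proposition \ref{prop:additivity_of_pushforward_entropy}) with no corrector, and whose per-step lower bound $C_\phi\epsilon^2/M^2$ is a clean variance computation along lines in matrix space (Lemma \ref{lem:convexity_xlogx} and Proposition \ref{prop:fiberwise_entropy_invertible_case}). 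The passage from the expectation bound to the almost-sure statement is then delegated to the Gorodetski--Kleptsyn theorem applied to the exterior powers rather than to a martingale argument. If you want to salvage your approach, the averaging over flags is the idea you are missing: without it, the uniform-in-configuration drift bound is precisely the hard open step of your plan.
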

Results of \cite{bednarski2025lyapunov} show that this bound is optimal
up to a multiplicative constant by considering a sequence
of perturbations of the identity. See also the appendix for related results. 
If the matrix is not the identity then it may be possible to obtain gaps
between exponents that are larger than $\epsilon^2$ although we do not study this here. 
Theorem \ref{thm:main_thm_simple} also improves on the earlier result in \cite{atnip2023universal}, which 
gives an explicit, but weaker, bound for $\liminf \frac 1n\log (s_1(B^n)/s_d(B^n))$; and
also showed the existence of non-explicit strictly positive lower bounds for
$\liminf \frac 1n\log(s_k(B^n)/s_{k+1}(B^n))$.
Later in the 
paper we will state some generalizations of Theorem \ref{thm:main_thm_simple};
in particular, it is not 
important for our technique that the same noise be applied at each step. 
In Theorem \ref{thm:main_thm_abstract} we give a more abstract statement 
from which the theorem above follows, which also gives that 
\begin{equation}
\mathbb{E}\left[\log \sval_k(B^n)-
\log \sval_{k+1}(B^n)\right]\ge n\epsilon^2C_{\phi}M^{-2}.
\end{equation}

One major motivation for this work is to develop a technique for studying ergodic 
and statistical properties of perturbations of diffeomorphisms under robust noise 
\cite{kifer1988random}. The work of Young emphasizes the important role of random 
perturbations of random dynamical systems. This perspective is quite natural from the 
point of view of smooth ergodic theory as zero noise limits give rise to 
SRB measures \cite{cowieson2005SRB}, a phenomenon known as stochastic stability 
\cite{young1986stochastic}. This has been studied in many settings, for example, 
random perturbations of Henon maps \cite{benedicks2006random}.
See also \cite{young1986random} \cite{young2008chaotic}. For other works related 
to random perturbations of diffeomorphisms see 
\cite{blumenthal2017lyapunov,blumenthal2018lyapunov,blumenthal2022positive} 
producing non-zero Lyapunov exponents for perturbations of the standard map. 

One important thing we should observe is that Theorem \ref{thm:main_thm_simple} 
has limited content when the matrices $A_n$ already have some amount of hyperbolicity 
to begin with. In such a case, the Lyapunov exponents will survive under 
perturbations. See for example, Ruelle \cite{ruelle1979ergodic}, Peres \cite{peres1991analytic}. 

\subsection{Related Results}
Quantitative estimates 
on Lyapunov exponents for small perturbations of stochastic differential
equations have a long history. Some SDE of interest are perturbations of 
non-stochastic differential equations by noise, and hence it is quite natural to study the 
Lyapunov exponents of the perturbation.
In \cite{auslender1982asymptotic,pinksy1988lyapunov}, an exact formula is given 
for the top Lyapunov exponent for random perturbations of a nilpotent system 
and it is obtained that the exponent grows like $\epsilon^{2/3}$ for noise of size $\epsilon$.
For comparison, the paper \cite{mohammed1997lyapunov} obtains a related quantitative 
$\epsilon^2$ bound. For other types of systems, other types of expansions have also been  
obtained, see for example  
\cite{baxendale2001lyapunov,baxendale2022lyapunov,deville2011stability} 
where perturbations of Hamiltonian systems are also considered.
See also \cite{sowers2001tangent,baxendale2001lyapunov}.
Estimate on the size of Lyapunov exponents for a SDE play a role in a number 
of recent works such as 
\cite{bedrossian2024chaos,bedrossian2022regularity,bedrossian2024quantitative,chemnitz2023positive}. 
Many of these works obtain a  detailed study of the Lyapunov exponents by means of 
Furstenberg's formula. 
As mentioned above the papers \cite{bednarski2025lyapunov} and \cite{atnip2023universal} 
also give explicit bounds in the context of random matrix products.
 
The most closely related non-quantitative works to this paper are  \cite{goldsheid2022exponential} and 
\cite{gorodetski2023nonstationary}. 
In those works independent proofs of a non-stationary version of Furstenberg's 
theorem are obtained. Our approach builds on the framework of the second paper. 
In \cite{gorodetski2023nonstationary}, the authors introduce a framework 
for showing the effective growth of Lyapunov exponents. However, as they are 
working in much greater generality, they do not obtain the type of quantitative  
estimates on the growth rates that appear in Theorem \ref{thm:main_thm_simple}. 
However, their framework is so flexible that by developing a fibered version we are 
able to obtain such results. 

The study of Lyapunov exponents has a long history. We will focus on the setting 
of IID matrix products, to which this work bears the greatest similarity. 
The main theory follows the path laid out by Furstenberg \cite{furstenberg1963noncommuting}, who proved 
the most fundamental results on nontriviality of the Lyapunov exponents. 
Later to study simple spectrum, Guivarc'h and Raugi \cite{guivarch1984random} gave appropriate 
conditions for $\lambda_1>\lambda_2$. Later Gol'dsheid and Margulis gave a 
different argument for simplicity of Lyapunov spectrum that identified Zariski 
density of the measure $\mu$ as a key property \cite{goldsheid1989lyapunov}. 
The book \cite{benoist2016random} gives an overview of this theory and gives 
many additional references.
Obtaining effective, finite time estimates on Lyapunov exponents is not well 
understood, and the actual computation of Lyapunov exponents is non-trivial. 
See, for example the paper \cite{pollicott2010maximal}, which describes an 
algorithm for computing the Lyapunov exponents of an IID matrix product.

As far as the authors are aware, while there are many approaches to non-triviality
of Lyapunov exponents, there are not as many approaches to showing the simplicity 
of the Lyapunov spectrum. Most of the approaches to showing simplicity of Lyapunov 
spectrum essentially involve the construction of candidates for the Osceledec 
subspaces followed by a verification of their properties. See for example, 
\cite{viana2014lectures}, \cite{avila2007simplicity}, \cite{goldsheid1989lyapunov}. 
In order to construct these subspaces one first studies stationary measures on 
appropriate flag bundles as these stationary measures are essentially the 
distribution of the stable and unstable flags.

An important technique for showing non-triviality of Lyapunov exponents, but 
not necessarily simplicity, is the Invariance Principle in its various forms. 
See \cite{viana2014lectures} for an overview and \cite{furstenberg1963noncommuting}, 
\cite{ledrappier1986positivity}, \cite{baxendale1989lyapunov}, \cite{viana2008almost}, 
\cite{avila2010extremal}, \cite{viana2008almost}, where  various versions of 
the Invariance Principle appear. Informally, the Invariance Principle says 
that if the Lyapunov exponents of a measure $\mu$ along the fibers of a skew 
product over a hyperbolic base are all equal to zero, then the disintegration 
of $\mu$ along the fibers of the skew product are holonomy invariant. 
Consequently, if there is no holonomy invariant measure, then there must be 
non-trivial Lyapunov exponents along the fiber. In some sense, the approach of 
\cite{gorodetski2023nonstationary} fits with these works, but adds some extra 
flexibility by using that it  only takes one iterate of the dynamics to observe 
invariance. One can think of the approach of \cite{gorodetski2023nonstationary} 
as a type of effective Invariance Principle, because it shows that when there 
is not an invariant stationary measure for random dynamics, then every measure 
has uniformly positive entropy. See also \cite{gorodetski2024central}.

One of the difficulties with using the Invariance Principle to study 
simplicity of Lyapunov spectrum is that, directly, it only gives that the 
top and bottom Lyapunov exponents are different. Once the Lyapunov exponents 
are non-zero it stops giving new information. Of course, when studying 
$\SL(2,\R)$ cocycles, the invariance principle is enough for simplicity.

In some sense, the approach we develop here is an attempt to adapt the 
Invariance Principle approach in order to study simplicity of Lyapunov
exponents. In order to be able to apply this type of Invariance Principle 
argument, we set up an auxiliary problem where we study an induced action 
of the random dynamics on a bundle where every fiber is a circle. 
(This is the bundle of miniflags defined below). The linear dynamics acts 
on the circles. What we prove is that the amount that the average circle 
gets distorted will estimate quite precisely the gap between two singular values.
In this sense, some of the most technically similar works are those of 
Ledrappier and Lessa \cite{ledrappier2024exact},\cite{ledrappier2023exact}, 
\cite{lessa2021entropy}, who also make use of bundles with $1$-dimensional 
fibers that are well adapted to the problem they are studying and also make 
use of fibered Invariance Principles.  

\subsection{Outline of the Approach}
The approach in this paper owes a great debt to the approach of Gorodetski 
and Kleptsyn to the non-stationary Furstenberg theorem \cite{gorodetski2023nonstationary}, 
and in the case of absolutely continuous noise, we are able to extend their 
approach so that it can effectively treat all Lyapunov exponents. 

In order to explain our contribution, let us first explain why the argument in 
\cite{gorodetski2023nonstationary} does not immediately extend to getting this 
type of effective estimate. The main structure of the argument in that paper 
has two steps. First, they develop an enhanced theory of the Furstenberg entropy, 
which says entropy is additive under convolution and hence increases under 
the hypotheses of their main result. 
Then they show that positive entropy implies that volume on $\RP^{d-1}$ under the linear  
random walk is distorted at least as much as the 
entropy predicts. If a matrix $A$ distorts volume, this implies there is a gap 
between $\sigma_1(A)$ and $\sigma_d(A)$. Estimating how large this gap is thus 
requires estimating the entropy of certain random dynamics from below. 
However, having solved that problem it does not 
obviously give a way to control the gap between $\sigma_1(A)$ and $\sigma_2(A)$.

In order to deal with this issue, we find a different space to let a matrix 
act on other than projective space. The space we choose is the space of partial 
flags of the type $(k-1,k,k+1)$, which we call \emph{miniflags} (Subsec.~\ref{subsec:miniflags}). 
These miniflags form a bundle over the space partial flags of type $(k-1,k+1)$ 
and we consider the action induced on the fibers of this bundle. A key 
technical result is the fact that if $A$ acts on the bundle of miniflags and we consider the 
distortion of volume on the miniflags, and average this distortion over all miniflags, then 
this closely approximates $\sigma_{k}/\sigma_{k+1}$. This is proved in Proposition \ref{prop:gapest}. 

The next task is to show that a typical random word in fact distorts 
volume along miniflags. To do this, in Section \ref{sec:entropy_of_bundle_maps} 
we introduce a generalization of the 
Furstenberg entropy that is adapted to fibered systems. This averages 
the entropy over all the different images of a given fiber.
We then develop an analogous entropy theory in this setting showing, for example, 
additivity of the entropy under appropriate hypotheses. Ultimately, this shows 
that if this fibered entropy on $(k-1,k+1)$ miniflags is large, 
then so too will be the gap between $\sigma_k$ and $\sigma_{k+1}$ due to Proposition \ref{prop:gapest}.

The final thing we must do to conclude is obtain quantitative bounds on the fiberwise 
entropy, showing \emph{uniform fiberwise push-forward entropy}. For this we 
introduce something called the pointwise entropy (Def.~\ref{defn:pointwise_entropy}). 
The pointwise entropy measures how much the Jacobians of different maps between 
two points vary. We then show in Proposition \ref{prop:fubini_argument_for_bundles} 
that one can use the pointwise 
entropy to obtain a lower bound on the entropy for the fiber bundle. Then in Section
\ref{sec:pointwise_entropy_est} we compute a lower bound on the pointwise entropy. 
In Section \ref{sec:conclusion} we then conclude the proof.

\smallskip
\textbf{Acknowledgments}. The first author was supported by the 
National Science Foundation under Award No.~DMS-2202967. The second author was supported
by a Discovery Grant from NSERC. The authors are also 
grateful to Agnieszka Zelerowicz and Zhenghe Zhang for organizing the special 
session on dynamical systems where the authors met and first undertook the study of this problem. 
We thank Reuben Drogin for helpful comments, and for pointing out an oversight in the computation of the densities of the conditional measures in an earlier version.

\section{Preliminaries}
In this section we introduce all of the basic structures that we will use 
throughout the rest of the paper.

\subsection{Miniflags}\label{subsec:miniflags}
In this paper we will study the induced action of a linear map $A\in \GL(d,\R)$ 
on certain space of flags. 
We will write $\Gr(k,d)$ for the Grassmannian of $k$-planes in $\R^d$. 
For $1\le k\le d-1$, we will write $\PF(k-1,k+1)$ for the space of partial flags 
of the form $0\subseteq V_{k-1}\subset V_{k+1}\subseteq \R^d$ where $\dim V_{i}=i$. 
Associated to a partial flag $(V_{k-1},V_{k+1})$ is the \emph{miniflag}, $\mc F(V_{k-1},V_{k+1})$,
of all subspaces $V_k$ satisfying $V_{k-1}\subset V_k\subset V_{k+1}$. 
We will say that the \emph{core dimension} of
$\mc F(V_{k-1},V_{k+1})$ is $k-1$. 
Note that the miniflags are the fibers of the obvious map $\PF(k-1,k,k+1)\to \PF(k-1,k+1)$ 
where we forget about the $k$-dimensional subspace. So, thinking of 
$\PF(k-1,k,k+1)$ as a fiber bundle over $\PF(k-1,k+1)$, the fibers are precisely the miniflags, and the 
miniflags are indexed by $\PF(k-1,k+1)$.
Note also that there is a natural correspondence between 
the miniflag $\mc F(V_{k-1},V_{k+1})$ and the (one-dimensional)
projective space of $V_{k+1}\ominus V_{k-1}$,
where $V_{k+1}\ominus V_{k-1}$ denotes $V_{k+1}\cap V_{k-1}^\perp$. 

\subsection{Basic Facts Concerning Entropy}
Below we will define a notion of entropy and introduce its properties.
Given a measure $\mu$ on $\Homeo(M,N)$, its entropy quantifies the smallest extent to which 
the measures $f_*\nu$ vary for $f$ distributed according to $\mu$. 
If there is a measure $\nu$ on $M$ such that $f_*\nu$ is 
the same for $\mu$-a.e.\ $f$, the entropy of the action of $\mu$ will be zero.
The definitions below appear in \cite[Sec.~2]{gorodetski2023nonstationary}. 

\begin{definition}\label{defn:KL}
Suppose that $M$ is a standard Borel space and that $\nu$ and $\wt{\nu}$ 
are Borel probability measures on $M$. We define the 
\emph{Kullback-Leibler divergence} of $\nu$ with respect $\wt{\nu}$ by
\[
\dKL(\nu\vert\wt{\nu})=\begin{cases}
\int_M \log \frac{d\nu}{d\wt{\nu}}\left(\frac{d\nu}{d\wt{\nu}}\right)\,d\wt{\nu}=
\int_M\log(\frac{d\nu}{d\wt{\nu}})\,d\nu &\text{ if }\nu\ll\wt{\nu};\\
+\infty & \text{ else}.
\end{cases}
\]
This is also known as the \emph{relative entropy}.
\end{definition}

Suppose that $M$ and $N$ are closed manifolds and that $\mu$ is a measure 
on $\Homeo(M,N)$. Given measures $\nu$ on 
$M$ and $\nu'$ on $N$, we can define the \emph{mean relative entropy} 
$\Phi_{\mu}(\nu\vert \nu')$ of these measures by
\begin{equation}
\Phi_{\mu}(\nu\vert\nu')=\int \dKL(f_*\nu\vert \nu')\,d\mu(f). 
\end{equation}

If $\mu$ is a probability measure on $\Homeo(M,N)$ and $\nu$ is a probability measure on $M$,
we define the convolution of $\mu$ with $\nu$ by:
\begin{equation}
\mu*\nu=\int_{\Homeo(M,N)} f_*\nu\,d\mu(f).
\end{equation}
As is often done, we also use the notation $\mu_2*\mu_1$ to refer to the 
push-forward of $\mu_2\otimes\mu_1$
under the map $(f_2,f_1)\mapsto f_2\circ f_1$.

A claim we shall use below is the following.
\begin{lem}\label{lem:min_Phi_nu_nu_prime}
\cite[Lem.~2.8]{gorodetski2023nonstationary} 
Suppose $\mu$ is a compactly supported probability on $\Homeo(M,N)$. 
For a fixed probability measure $\nu$ on $M$,
\[
\inf_{\nu'}\,\Phi_{\mu}(\nu\vert\nu')=\Phi_{\mu}(\nu\vert\mu*\nu),
\]
where the infimum is taken over probability measures on $N$.
That is, the mean relative entropy is minimized by the ``mean push-forward", $\nu'=\mu*\nu$ 
\end{lem}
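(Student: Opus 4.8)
The plan is to prove that $\nu' \mapsto \Phi_\mu(\nu \mid \nu')$ is minimized at $\nu' = \mu * \nu$ by exploiting the convexity of the function $t \mapsto t \log t$ together with Jensen's inequality, in the guise of the nonnegativity of Kullback--Leibler divergence. First I would reduce to the case where $\Phi_\mu(\nu \mid \mu*\nu) < \infty$, since otherwise there is nothing to prove; in particular this forces $f_*\nu \ll \mu*\nu$ for $\mu$-a.e.\ $f$, and we may also assume the candidate minimizer $\nu'$ satisfies $f_*\nu \ll \nu'$ for $\mu$-a.e.\ $f$ (else $\Phi_\mu(\nu \mid \nu') = +\infty$ and the inequality is trivial). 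Note that $\mu*\nu \ll \nu'$ as well, because $\mu*\nu = \int f_*\nu \, d\mu(f)$ and each $f_*\nu$ is absolutely continuous with respect to $\nu'$.

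Next I would carry out the key algebraic identity. For $\mu$-a.e.\ $f$, write the chain-rule decomposition of Radon--Nikodym derivatives
\[
\frac{d(f_*\nu)}{d\nu'} = \frac{d(f_*\nu)}{d(\mu*\nu)} \cdot \frac{d(\mu*\nu)}{d\nu'},
\]
valid $(\mu*\nu)$-a.e., hence $f_*\nu$-a.e.\ and $\nu'$-a.e.\ on the relevant supports. Taking logarithms and integrating against $f_*\nu$, then integrating over $f$ with respect to $\mu$, gives
\[
\Phi_\mu(\nu \mid \nu') = \Phi_\mu(\nu \mid \mu*\nu) + \int \int \log \frac{d(\mu*\nu)}{d\nu'}(y) \, d(f_*\nu)(y) \, d\mu(f).
\]
The inner-then-outer double integral can be rearranged using Fubini: $\int \left( \int \log\frac{d(\mu*\nu)}{d\nu'} \, d(f_*\nu) \right) d\mu(f) = \int \log\frac{d(\mu*\nu)}{d\nu'} \, d(\mu*\nu)$, precisely by the definition of the convolution $\mu*\nu = \int f_*\nu \, d\mu(f)$. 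This last quantity is $\dKL(\mu*\nu \mid \nu') \ge 0$ by Definition~\ref{defn:KL} and the standard nonnegativity of relative entropy (Gibbs' inequality, from Jensen applied to $-\log$). Therefore $\Phi_\mu(\nu \mid \nu') \ge \Phi_\mu(\nu \mid \mu*\nu)$ for every $\nu'$, with equality when $\nu' = \mu*\nu$, which yields the claimed infimum.

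The main obstacle I anticipate is justifying the interchange of integrals and the manipulations of Radon--Nikodym derivatives rigorously: one must check that the integrand $\log \frac{d(\mu*\nu)}{d\nu'}$ is genuinely integrable (not merely that the formal identity holds), handling the possibility of the positive and negative parts separately, and one must be careful that the chain rule for Radon--Nikodym derivatives holds on the correct null sets when chaining through three measures that need not be mutually absolutely continuous. A clean way around the integrability subtlety is to first establish the inequality $\Phi_\mu(\nu \mid \nu') \ge \Phi_\mu(\nu \mid \mu*\nu)$ directly via Jensen's inequality applied fiberwise --- for each fixed $y$, the map $f \mapsto \frac{d(f_*\nu)}{d\nu'}(y)$ averages (against $\mu$) to $\frac{d(\mu*\nu)}{d\nu'}(y)$, and $t \mapsto t\log t$ is convex --- thereby getting the bound without ever needing to know the cross term is finite. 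Since this is exactly \cite[Lem.~2.8]{gorodetski2023nonstationary}, I would present the argument in this streamlined convexity form and simply cite the original for the measure-theoretic bookkeeping.
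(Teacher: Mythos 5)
The paper itself offers no proof here --- the lemma is quoted verbatim from \cite[Lem.~2.8]{gorodetski2023nonstationary} --- so the comparison is with the standard argument, which is exactly your main line: the chain rule for Radon--Nikodym derivatives yields the compensation identity $\Phi_{\mu}(\nu\vert\nu')=\Phi_{\mu}(\nu\vert\mu*\nu)+\dKL(\mu*\nu\vert\nu')$, and the second term is nonnegative by Gibbs/Jensen. That part of your proposal is correct, and your observation that the cross term integrates (over $f$) to $\dKL(\mu*\nu\vert\nu')$ via the definition of $\mu*\nu$ is the right mechanism.

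Two caveats. First, your opening reduction is backwards: if $\Phi_{\mu}(\nu\vert\mu*\nu)=+\infty$ there \emph{is} something to prove, namely that $\Phi_{\mu}(\nu\vert\nu')=+\infty$ for every $\nu'$. The correct reduction is the one you also state: fix $\nu'$; if $\Phi_{\mu}(\nu\vert\nu')=+\infty$ the inequality is trivial, and otherwise $f_*\nu\ll\nu'$ for $\mu$-a.e.\ $f$, whence $\mu*\nu\ll\nu'$ and also $f_*\nu\ll\mu*\nu$ for $\mu$-a.e.\ $f$. This last absolute continuity is \emph{not} automatic in general (e.g.\ point masses $\delta_{t(f)}$ can average to Lebesgue measure without any of them being absolutely continuous with respect to the average), but it does follow once the densities $g_f=d(f_*\nu)/d\nu'$ exist, since $\int_{\{g=0\}}g_f\,d\nu'$ integrates to zero over $f$, where $g=\int g_f\,d\mu(f)$; you should say this rather than assert the chain rule holds ``on the relevant supports.'' Second, the ``clean'' fiberwise-Jensen shortcut you propose at the end does not give the desired bound: applying Jensen to $t\mapsto t\log t$ at each $y$ and integrating against $\nu'$ yields $\Phi_{\mu}(\nu\vert\nu')\ge \dKL(\mu*\nu\vert\nu')$, not $\Phi_{\mu}(\nu\vert\nu')\ge\Phi_{\mu}(\nu\vert\mu*\nu)$, so it cannot replace the compensation identity. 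The integrability of the cross term is better handled directly: since $t\log t\ge -e^{-1}$, the negative part of $\log\bigl(d(\mu*\nu)/d\nu'\bigr)$ has $(\mu*\nu)$-integral at most $e^{-1}$, which legitimizes the Tonelli/Fubini interchange and shows the decomposition is valid in the extended sense for $\mu$-a.e.\ $f$.
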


\noindent This leads us to define the Furstenberg entropy as follows. 

\begin{definition}\label{defn:furstenberg_entropy}\cite[Def.~2.3]{gorodetski2023nonstationary}
Suppose that $M$ and $N$ are two closed manifolds and that $\mu$ is a probability 
measure on $\Homeo(M,N)$. We define the \emph{Furstenberg entropy} of $\nu$ by:
\[
\Phi_{\mu}(\nu)= \Phi_\mu(\nu|\mu*\nu)=\int \dKL(f_*\nu\vert \mu*\nu)\,d\mu(f).
\]
\end{definition}
This is a measure of the extent to which the push-forwards $f_*\nu$ 
vary as $f$ runs over the support of $\mu$.
In this notation, Lemma \ref{lem:min_Phi_nu_nu_prime} states $\Phi_\mu(\nu|\nu')\ge
\Phi_\mu(\nu)$ with equality if and
only if $\nu'=\mu*\nu$. 

\begin{definition}\label{defn:pushforward_entropy}
Suppose that $\mu$ is a probability measure on the space of homeomorphisms $\Homeo(M,N)$. 
Then we define the \emph{push-forward entropy} of $\mu$ (when
maps distributed as $\mu$ act on $M$) by
\[
\EP(\mu)=\inf_\nu\, \Phi_\mu(\nu),
\]
where the infimum is taken over probability measures on $M$.
\end{definition}
The push-forward entropy only vanishes if there is a probability measure $\nu$ on $M$ such that
$f_*\nu=\mu*\nu$ for $\mu$-a.e.\ $f$\cite[Lem.~2.5]{gorodetski2023nonstationary}. 

\begin{lem}[Concavity of entropy]\label{lem:convexity_entropy}
Suppose that $M$ and $N$ are closed manifolds and that $\mu$ is a 
compactly supported measure on $\Homeo(M,N)$. 
Suppose we represent $\mu$ as a convex combination of a family of probability 
measures $\{\mu_{\lambda}\}_{\lambda\in\Lambda}$ 
indexed by a probability space $(\Lambda,\eta)$ where each $\mu_{\lambda}$ is a 
probability measure on $\Homeo(M,N)$. 
In other words, $\mu=\int_{\Lambda} \mu_{\lambda}\,d\eta(\lambda)$. Then
\begin{equation}
\EP(\mu)\ge \int_{\Lambda} \EP(\mu_{\lambda})\,d\nu(\lambda). 
\end{equation}
\end{lem}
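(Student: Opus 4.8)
The plan is to reduce the statement to the fact that Kullback--Leibler divergence is jointly convex. First I would unwind the definitions: for a fixed probability measure $\nu$ on $M$, and an arbitrary target measure $\nu'$ on $N$, we have $\Phi_\mu(\nu\vert\nu') = \int \dKL(f_*\nu\vert\nu')\,d\mu(f)$, and since $\mu = \int_\Lambda \mu_\lambda\,d\eta(\lambda)$, Fubini gives $\Phi_\mu(\nu\vert\nu') = \int_\Lambda \Phi_{\mu_\lambda}(\nu\vert\nu')\,d\eta(\lambda)$. Now the key point is that by Lemma \ref{lem:min_Phi_nu_nu_prime}, for each $\lambda$ we may bound $\Phi_{\mu_\lambda}(\nu\vert\nu') \ge \Phi_{\mu_\lambda}(\nu) \ge \EP(\mu_\lambda)$, the last inequality being the definition of push-forward entropy. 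Integrating over $\lambda$ yields $\Phi_\mu(\nu\vert\nu') \ge \int_\Lambda \EP(\mu_\lambda)\,d\eta(\lambda)$.

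The above holds for every choice of $\nu'$; in particular, choosing $\nu' = \mu*\nu$ gives $\Phi_\mu(\nu) = \Phi_\mu(\nu\vert\mu*\nu) \ge \int_\Lambda \EP(\mu_\lambda)\,d\eta(\lambda)$. Since this lower bound is independent of $\nu$, taking the infimum over all probability measures $\nu$ on $M$ on the left-hand side gives exactly $\EP(\mu) = \inf_\nu \Phi_\mu(\nu) \ge \int_\Lambda \EP(\mu_\lambda)\,d\eta(\lambda)$, which is the claimed inequality (note the statement writes $d\nu(\lambda)$ but means $d\eta(\lambda)$).

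I would expect the main technical obstacle to be measurability and integrability bookkeeping rather than any genuine analytic difficulty: one must check that $\lambda \mapsto \Phi_{\mu_\lambda}(\nu\vert\nu')$ is measurable so that the Fubini step is legitimate, that $\lambda\mapsto \EP(\mu_\lambda)$ is measurable so the right-hand integral makes sense, and that the decomposition $\mu = \int \mu_\lambda\,d\eta$ is compatible with pushing forward under $f\mapsto f_*\nu$. Compact support of $\mu$ (hence of each $\mu_\lambda$, up to an $\eta$-null set) ensures all the divergences involved are finite on the relevant measures and that Lemma \ref{lem:min_Phi_nu_nu_prime} applies. One subtlety worth a remark: if $\int_\Lambda \EP(\mu_\lambda)\,d\eta(\lambda) = +\infty$ the inequality still holds vacuously once we observe $\EP(\mu)$ would then also have to be infinite, but in the compactly supported setting all these quantities are finite, so this case does not arise.
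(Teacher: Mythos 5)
Your proof is correct and follows essentially the same route as the paper's: decompose the mean relative entropy via Fubini, apply Lemma \ref{lem:min_Phi_nu_nu_prime} fiberwise to bound $\Phi_{\mu_\lambda}(\nu\vert\nu')$ below by $\EP(\mu_\lambda)$, and then take the infimum over $\nu$ (the paper exchanges the infimum and the integral mid-argument, but since your lower bound is independent of $\nu$ this is the same step). You are also right that $d\nu(\lambda)$ in the statement is a typo for $d\eta(\lambda)$.
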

\begin{proof}
This follows from the inequalities stated above. In particular, we will repeatedly 
use that $\Phi_{\mu}(\nu\vert \nu')$ is minimized when $\nu'=\mu*\nu$ 
(Lemma \ref{lem:min_Phi_nu_nu_prime}). 
\begin{align*}
\EP(\mu)=\inf_\nu\,\Phi_{\mu}(\nu)&=\inf_{\nu}\,\Phi_{\mu}(\nu\vert \mu*\nu)\\
&=\inf_{\nu}\, \int_{\Homeo(M,N)} \dKL(f_*\nu\vert \mu*\nu)\,d\mu(f)\\
&=\inf_{\nu}\, \int_{\Lambda} \int_{f} \dKL(f_*\nu\vert \mu*\nu)\,d\mu_{\lambda}(f)\,d\eta(\lambda)\\
&=\inf_{\nu}\,\int_{\Lambda} \Phi_{\mu_{\lambda}}(\nu\vert \mu*\nu)\,d\eta(\lambda)\\
&\ge \int_{\Lambda} \inf_{\nu}\, \Phi_{\mu_{\lambda}}(\nu\vert \mu*\nu)\,d\eta(\lambda)\\
&\ge \int_{\Lambda} \inf_{\nu}\,\Phi_{\mu_{\lambda}}(\nu\vert \mu_{\lambda}* \nu)\,d\eta(\lambda)\\
&= \int_{\Lambda} \inf_{\nu}\, \Phi_{\mu_{\lambda}}(\nu)\,d\eta(\lambda)\\
&=\int_{\Lambda} \EP(\mu_{\lambda})\,d\eta(\lambda).
\end{align*}
\end{proof}

The following lemma is one of the key tools of \cite{gorodetski2023nonstationary}: 
the push-forward entropy of a convolution is superadditive. 

\begin{lem}[Superadditivity of entropy]\label{lem:additivity_of_entropy_gap}
Suppose that $\mu_1$ is a probability measure on $\Homeo(M_1,M_2)$ and that 
$\mu_2$ is a probability measure on $\Homeo(M_2,M_3)$. Then 
\[
\EP(\mu_2*\mu_1)\ge \EP(\mu_1)+\EP(\mu_2).
\]
\end{lem}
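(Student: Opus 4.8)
The plan is to unwind the definitions and exploit the minimization property of Lemma \ref{lem:min_Phi_nu_nu_prime} twice, together with the chain-rule / additivity property of Kullback--Leibler divergence under composition of maps. First I would fix a probability measure $\nu$ on $M_1$ that is (nearly) optimal for $\EP(\mu_2*\mu_1)$, so that $\EP(\mu_2*\mu_1)$ is within $\delta$ of $\Phi_{\mu_2*\mu_1}(\nu)$. Writing a sample from $\mu_2*\mu_1$ as $f_2\circ f_1$ with $(f_2,f_1)$ distributed as $\mu_2\otimes\mu_1$, I would use the identity $(f_2\circ f_1)_*\nu=(f_2)_*((f_1)_*\nu)$ to decompose the relevant divergence. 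The key measure-theoretic fact is the following additivity of the KL divergence: for any fixed $f_1$, and any reference measure $\nu_3$ on $M_3$,
\[
\int \dKL\big((f_2\circ f_1)_*\nu\,\big\vert\,\nu_3\big)\,d\mu_2(f_2)
=\int \dKL\big((f_2)_*((f_1)_*\nu)\,\big\vert\,\nu_3\big)\,d\mu_2(f_2),
\]
which, by Lemma \ref{lem:min_Phi_nu_nu_prime} applied on $\Homeo(M_2,M_3)$ with the measure $(f_1)_*\nu$ in place of $\nu$, is bounded below by $\Phi_{\mu_2}\big((f_1)_*\nu\big)\ge \EP(\mu_2)$, uniformly in $f_1$.

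Next I would integrate over $f_1\sim\mu_1$. The step that requires care is to recover a full $\Phi_{\mu_1}(\nu)$ term rather than losing it. The mechanism is to choose the reference measure on $M_3$ to be $\mu_2*(\mu_1*\nu)=(\mu_2*\mu_1)*\nu$ and to use the decomposition of relative entropy: for a pushforward through a composition one has, heuristically, $\dKL\big((f_2\circ f_1)_*\nu\,\vert\,(\mu_2*\mu_1)*\nu\big)$ splits as a ``coarse'' part measuring how $(f_1)_*\nu$ is placed relative to $\mu_1*\nu$ after applying $f_2$, plus a ``fine'' part measuring the spread of $(f_2)_*$ applied to $\mu_1*\nu$. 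Concretely, I would write
\[
\Phi_{\mu_2*\mu_1}(\nu)
=\int\!\!\int \dKL\big((f_2)_*(f_1)_*\nu\,\big\vert\,(\mu_2*\mu_1)*\nu\big)\,d\mu_2(f_2)\,d\mu_1(f_1),
\]
and for each $f_1$ invoke the pointwise identity
\[
\dKL\big((f_2)_*(f_1)_*\nu\,\big\vert\,(f_2)_*(\mu_1*\nu)\big)
=\dKL\big((f_1)_*\nu\,\big\vert\,\mu_1*\nu\big),
\]
valid because $f_2$ is a homeomorphism and KL divergence is invariant under a common invertible change of variables. Combining these via the chain rule for KL divergence (adding and subtracting $\log\frac{d(f_2)_*(\mu_1*\nu)}{d(\mu_2*\mu_1)*\nu}$) yields
\[
\Phi_{\mu_2*\mu_1}(\nu)=\Phi_{\mu_1}(\nu)+\int \dKL\big((f_2)_*(\mu_1*\nu)\,\big\vert\,(\mu_2*\mu_1)*\nu\big)\,d\mu_2(f_2),
\]
and the last integral is exactly $\Phi_{\mu_2}(\mu_1*\nu)\ge \EP(\mu_2)$ by Definition \ref{defn:furstenberg_entropy}. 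Since $\Phi_{\mu_1}(\nu)\ge \EP(\mu_1)$ as well, we get $\Phi_{\mu_2*\mu_1}(\nu)\ge \EP(\mu_1)+\EP(\mu_2)$; taking the infimum over $\nu$ and letting $\delta\to 0$ finishes the proof.

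The main obstacle I expect is justifying the chain-rule manipulation rigorously when the relevant measures are not mutually absolutely continuous — in particular making sense of the cancellation $\dKL((f_2)_*(f_1)_*\nu\,\vert\,(f_2)_*(\mu_1*\nu))=\dKL((f_1)_*\nu\,\vert\,\mu_1*\nu)$ and the additive split, handling the $+\infty$ cases. Since each $f_i$ is a homeomorphism the change-of-variables identity for Radon--Nikodym derivatives is clean, and for the split one checks that $(f_1)_*\nu\ll\mu_1*\nu$ and $(f_2)_*(\mu_1*\nu)\ll(\mu_2*\mu_1)*\nu$ hold $\mu$-a.e.\ whenever the left side is finite (otherwise both sides are $+\infty$ and there is nothing to prove); then the Radon--Nikodym derivative of the composite factors as a product and the logarithm turns it into the asserted sum, with Fubini--Tonelli (using compact support, or the finiteness just established) justifying the interchange of integrals. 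This is precisely the computation carried out in \cite[Sec.~2]{gorodetski2023nonstationary}, so I would follow that template.
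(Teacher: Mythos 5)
Your proposal is correct and follows essentially the same route as the paper: the paper simply cites the cocycle identity $\Phi_{\mu_2*\mu_1}(\nu)=\Phi_{\mu_1}(\nu)+\Phi_{\mu_2}(\mu_1*\nu)$ from \cite[Prop.~2.10]{gorodetski2023nonstationary} and takes infima, whereas you re-derive that identity via the KL chain rule and invariance under the homeomorphism $f_2$ before taking the same infima. The only cosmetic difference is that your initial $\delta$-optimal choice of $\nu$ is unnecessary, since the inequality holds for every $\nu$.
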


\begin{proof}
Proposition 2.10 in \cite{gorodetski2023nonstationary} gives that 
\[
\Phi_{\mu_2*\mu_1}(\nu)=\Phi_{\mu_1}(\nu)+\Phi_{\mu_2}(\mu_1*\nu).
\]
In particular, we see that 
\[
\EP(\mu_2*\mu_1)=\inf_{\nu}\,\Phi_{\mu_2*\mu_1}(\nu)\ge \inf_{\nu}\, 
\Phi_{\mu_1}(\nu)+\inf_{\nu'}\,\Phi_{\mu_2}(\nu')=\EP(\mu_1)+\EP(\mu_2).
\]
\end{proof}

\subsection{The relationship between entropy and volume distortion}

\begin{lem}[Entropy causes volume distortion \cite{gorodetski2023nonstationary}]
\label{lem:entropy_volume_distortion} 
Suppose that $M_1$ and $M_2$ 
are two Riemannian manifolds with unit volume, and that $\mu$ 
is a measure on $\Diff^1(M_1,M_2)$. Then 
\[
\int \log \mc{N}(f)\,d\mu(f)\ge \Phi_{\mu}(\vol)\ge \EP(\mu),
\]
where 
\[
\mc{N}(f)=\max_{x\in M_1} \abs{\Jac(f)_x}^{-1}
\]
is the maximum of the volume distortion of $f$.

\end{lem}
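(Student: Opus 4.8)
The plan is to prove Lemma \ref{lem:entropy_volume_distortion} by establishing the two inequalities separately. The second inequality, $\Phi_\mu(\vol)\ge \EP(\mu)$, is immediate from the definitions: $\EP(\mu)=\inf_\nu \Phi_\mu(\nu)$ is an infimum over all probability measures $\nu$ on $M_1$, and $\vol$ is one such measure (here we use that $M_1$ has unit volume, so $\vol$ is a probability measure). Hence the only real work is in the first inequality, $\int \log \mc N(f)\,d\mu(f)\ge \Phi_\mu(\vol)$.

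For the first inequality, I would expand $\Phi_\mu(\vol)=\int \dKL(f_*\vol\,\vert\,\mu*\vol)\,d\mu(f)$ using Lemma \ref{lem:min_Phi_nu_nu_prime}, which tells us $\mu*\vol$ is the minimizer: $\dKL(f_*\vol\,\vert\,\mu*\vol)\le \dKL(f_*\vol\,\vert\,\nu')$ for any probability measure $\nu'$ on $M_2$. Taking $\nu'=\vol_{M_2}$ (again a probability measure since $M_2$ has unit volume), it suffices to show $\int \dKL(f_*\vol\,\vert\,\vol_{M_2})\,d\mu(f)\le \int \log \mc N(f)\,d\mu(f)$, which will follow from the pointwise bound $\dKL(f_*\vol\,\vert\,\vol_{M_2})\le \log \mc N(f)$ for each fixed $f\in\Diff^1(M_1,M_2)$. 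To see this, note that for a diffeomorphism $f$, the density of $f_*\vol$ with respect to $\vol_{M_2}$ at a point $y=f(x)$ is $\abs{\Jac(f)_x}^{-1}$; therefore $\dKL(f_*\vol\,\vert\,\vol_{M_2})=\int_{M_1}\log\big(\abs{\Jac(f)_x}^{-1}\big)\,d\vol(x)$, by the change of variables that rewrites the integral over $M_2$ against $d(f_*\vol)$ as an integral over $M_1$ against $d\vol$. Since $\abs{\Jac(f)_x}^{-1}\le \max_{x'}\abs{\Jac(f)_{x'}}^{-1}=\mc N(f)$ for every $x$, integrating gives $\dKL(f_*\vol\,\vert\,\vol_{M_2})\le \log\mc N(f)$, and then integrating in $f$ against $\mu$ finishes the argument.

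The main obstacle is a matter of care rather than depth: I must make sure $f_*\vol$ is genuinely absolutely continuous with respect to $\vol_{M_2}$ with a well-defined density (so that the $\dKL$ is not automatically $+\infty$), which holds because $f$ is a $C^1$ diffeomorphism with $C^1$ inverse so the Jacobian is bounded away from $0$ on the compact manifold $M_1$; and I should confirm the change-of-variables identity $\int_{M_2} \log\frac{d(f_*\vol)}{d\vol_{M_2}}\,d(f_*\vol) = \int_{M_1}\log\big(\abs{\Jac(f)_x}^{-1}\big)\,d\vol(x)$ is the standard one. One should also note that $\log\mc N(f)\ge 0$ whenever $f$ maps a unit-volume manifold to a unit-volume manifold, since $\int_{M_1}\abs{\Jac(f)_x}\,d\vol(x)=\vol(M_2)=1$ forces $\abs{\Jac(f)_x}\le 1$ somewhere, hence $\mc N(f)\ge 1$; this makes the statement consistent but is not strictly needed for the inequality chain. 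Measurability of $f\mapsto \log\mc N(f)$ and of $f\mapsto \dKL(f_*\vol\,\vert\,\vol_{M_2})$ in $f$ is needed for the integrals to make sense, and follows from continuity of these quantities in the $C^1$ topology on $\Diff^1(M_1,M_2)$.
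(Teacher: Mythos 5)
Your proof is correct and follows essentially the same route as the paper: the pointwise bound $\dKL(f_*\vol\,\vert\,\vol_{M_2})\le\log\mc N(f)$ via the Jacobian density, combined with Lemma \ref{lem:min_Phi_nu_nu_prime} and the definition of $\EP$ as an infimum. One small caution: Lemma \ref{lem:min_Phi_nu_nu_prime} gives the minimality of $\mu*\vol$ only for the $\mu$-averaged KL divergence, not pointwise in $f$ as you phrase it, but since you only invoke the averaged inequality the argument stands.
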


\begin{proof}
An identical argument appears in the proof of \cite[Thm.~1.6]{gorodetski2023nonstationary}. 
Write $m_1$ for the volume on $M_1$ and $m_2$ for the volume on $M_2$. Let 
\[
\mc{N}(f)=\max_{x\in M_1} \abs{\Jac(f)_x}^{-1}=\max_{y\in M_2} \frac{d(f_*m_1)}{dm_2}(y). 
\] 
Hence from the definition of $d_{KL}$ (Def.~\ref{defn:KL}),
\[
\log \mc{N}(f)\ge d_{KL}(f_*m_1\vert m_2). 
\]
Thus taking expectations over $f$, which is distributed according to $\mu$:
\[
\E_{\mu}\left[\log \mc{N}(f)\right]\ge \E_{\mu}\left[d_{KL}(f_*m_1\vert m_2)\right]
\ge \E_{\mu}\left[d_{KL}(f_*m_1\vert \mu*m_1)\right]\ge \Phi_{\mu}(m_1)\ge \EP(\mu),
\]
where we have used that $\mu*m_1$ minimized the mean of the Kullback-Leibler 
divergences in the second inequality. 
\end{proof}
In fact, we will only use the above formula for maps of one-dimensional projective spaces. 

\subsection{Miniflag volume distortion and singular values}
Let $\mc{F}(V_{i-1},V_{i+1})$ be a miniflag. The entropy of a map controls the mean volume
distortion. If $M_1$ and $M_2$ are two Riemannian manifolds, and 
$f\colon M_1\to M_2$ is a diffeomorphism, we will 
write $(J(f)^{-1})_{\max}$ for $\max_{x\in M_1} \abs{\det D_xf}^{-1}$; as usual 
$\abs{\det D_xf} = \|(D_xf)_* \vol_x\|/\|\vol_{f(x)}\|$.  

We begin with the following elementary lemma. 

\begin{lem}\label{lem:volume_distortion_RP1}
If $A\in \GL(2,\R)$ and $A_*$ is the induced map $\RP^1\to \RP^1$, then
\[
(J(A_*)^{-1})_{\max}=\sval_1(A)/\sval_2(A).
\]
\end{lem}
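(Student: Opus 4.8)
The plan is to compute the Jacobian of the induced map $A_*\colon \RP^1\to\RP^1$ directly and identify its extrema with the ratio of singular values. First I would reduce to a normal form: by the singular value decomposition, write $A = U\Sigma V^T$ with $U,V\in\mathrm{O}(2)$ and $\Sigma=\diag(\sval_1,\sval_2)$ with $\sval_1\ge\sval_2>0$. Since elements of $\mathrm{O}(2)$ act as isometries of $\RP^1$ (they preserve the round metric up to sign), the induced maps $U_*$ and $V_*$ have Jacobian identically equal to $1$ in absolute value. By the chain rule, $\abs{\det D_x(A_*)} = \abs{\det D_{V_*x}(\Sigma_*)}$ composed with unit-Jacobian maps, so $(J(A_*)^{-1})_{\max} = (J(\Sigma_*)^{-1})_{\max}$. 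Thus it suffices to treat the diagonal case $A=\Sigma=\diag(a,b)$ with $a=\sval_1\ge b=\sval_2>0$.

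Next I would parametrize $\RP^1$ by the angle $\theta\in[0,\pi)$ of the line $\ell_\theta = \R\cdot(\cos\theta,\sin\theta)$, using arclength on the circle of directions as the Riemannian metric (this is the standard metric making $\RP^1$ have total length $\pi$; the normalization is irrelevant since the Jacobian is a ratio). The map $\Sigma$ sends $(\cos\theta,\sin\theta)$ to $(a\cos\theta, b\sin\theta)$, whose direction angle $\psi(\theta)$ satisfies $\tan\psi = (b/a)\tan\theta$. Differentiating, $\sec^2\psi\,\frac{d\psi}{d\theta} = (b/a)\sec^2\theta$, which together with the identity $\sec^2\psi = 1 + (b/a)^2\tan^2\theta$ gives, after simplification,
\[
\frac{d\psi}{d\theta} = \frac{ab}{a^2\sin^2\theta + b^2\cos^2\theta}.
\]
This is exactly $\abs{\det D_{\ell_\theta}(\Sigma_*)}$. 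Its reciprocal is $(a^2\sin^2\theta + b^2\cos^2\theta)/(ab)$, whose maximum over $\theta$ is attained at $\theta=\pi/2$ (the direction of the top singular vector), where it equals $a^2/(ab) = a/b = \sval_1(A)/\sval_2(A)$. Hence $(J(A_*)^{-1})_{\max} = \sval_1(A)/\sval_2(A)$, as claimed.

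There is no serious obstacle here; the lemma is genuinely elementary. The only points requiring a little care are: confirming that the orthogonal factors in the SVD act by (metric) isometries on $\RP^1$ so their Jacobians drop out cleanly under the chain rule; and carrying out the trigonometric differentiation to get a closed form for $d\psi/d\theta$ without sign errors. One could alternatively phrase the whole computation more invariantly by using the identification in Subsection~\ref{subsec:miniflags} of a line $\ell$ with a point of projective space and noting that $A_*$ expands the induced metric at $\ell$ by the ratio $\|Av\|^2/\|v\|^2$ appropriately interpreted — but the explicit angle computation is the most transparent route, and the maximum being achieved at the dominant singular direction is then immediate.
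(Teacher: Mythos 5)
Your proof is correct and follows essentially the same route as the paper: reduce to a diagonal matrix via the singular value decomposition (the orthogonal factors act isometrically on $\RP^1$) and then do a chart computation; the paper phrases that computation as an upper bound $J_{\max}\le \sval_1/\sval_2$ obtained by factoring $B_*$ through the radial projection onto $S^1$, together with a witness point where the bound is attained, rather than writing the Jacobian in closed form, but the content is the same. One small slip: differentiating $\tan\psi=(b/a)\tan\theta$ gives $d\psi/d\theta = ab/(a^2\cos^2\theta+b^2\sin^2\theta)$, with $\cos$ and $\sin$ interchanged relative to what you wrote, so the maximum of the inverse Jacobian is attained at $\theta=0$, i.e.\ at the line spanned by the top singular vector $e_1$ (consistent with your parenthetical remark but not with the angle $\pi/2$ you named); the maximal value $a/b=\sval_1(A)/\sval_2(A)$ is unaffected.
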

\begin{proof}
We show that for any $B\in\GL(d,\R)$, $\max_{x\in\RP^1}J(B_*)=\sval_1(B)/\sval_2(B)$. Applying this to 
$B=A^{-1}$, using $B_*=A_*^{-1}$, we obtain 
\begin{align*}
(J(A_*)^{-1})_\text{max}&=((J(A_*))_\text{min})^{-1}=
(J(B_*))_{\text{max}}\\
&=\sval_1(B)/\sval_2(B)=\sval_2(A)^{-1}/\sval_1(A)^{-1}=\sval_1(A)/\sval_2(A).
\end{align*}
Let $B\in\GL(d,\R)$. By the singular value decomposition, it suffices to consider 
the case of a diagonal matrix. 
Now consider the unit circle $S^1$ in $\R^2$. We may regard $B_*$ as the composition of two maps $B$ 
and the projection $P$ of $\R^2\setminus \{0\}$ to $S^1$. Thus it suffices to compute the norm of 
the action on a tangent vector to $S^1$. The norm of $B$ is $\sval_1(B)$. The norm of $P$ restricted 
to vectors $v\in \R^2$ with $\|v\|\ge \delta$ is $\delta^{-1}$. Thus for the composition 
\[
B_*: S^1\to B(S^1)\to BA(S^1)=S^1,
\]
one has $J_{\max}\le \sval_1(B)\sval_2(B)^{-1}$. A straightforward computation using the 
chart $\theta\mapsto [\theta,1]$ shows that this value, $\sval_1(B)\sval_2(B)^{-1}$ is realized as 
the differential at the point $[0,1]$. Thus we have that $J_{\max}(B_*)=\sval_1(B)\sval_2^{-1}(B)$,
as required. 
\end{proof}

We will need to estimate the volume distortion on the miniflags. For this we have the following lemma.

\begin{prop}
Suppose that $V$ is an inner product space. Then 
\[
\log \frac {\sval_1(B|_{V_{k+1}\perp V_{k-1}})}{\sval_2(B|_{V_{k+1}\perp V_{k-1}})} 
=\log J_{\max} (B\vert \mc{F}(V_{k-1},V_{k+1})).
\]
\end{prop}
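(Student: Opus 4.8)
The plan is to reduce the statement about miniflag volume distortion directly to the one-dimensional case already handled in Lemma~\ref{lem:volume_distortion_RP1}. The key observation, which was flagged in Subsection~\ref{subsec:miniflags}, is that the miniflag $\mc F(V_{k-1},V_{k+1})$ is canonically identified with the projective line $\RP^1$ of the two-dimensional space $W := V_{k+1}\ominus V_{k-1} = V_{k+1}\cap V_{k-1}^\perp$: a $k$-dimensional subspace $V_k$ with $V_{k-1}\subset V_k\subset V_{k+1}$ corresponds to the line $V_k\cap W = V_k\ominus V_{k-1}$. So first I would make this identification precise, and check that under it the induced action of $B$ on the miniflag $\mc F(V_{k-1},V_{k+1})$ corresponds to the induced action on $\RP^1$ of a specific linear map $\R^2\cong W\to W'\cong\R^2$, where $W' = (BV_{k+1})\ominus(BV_{k-1})$.

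Second, I would identify that linear map. The subtle point is that the natural map $W\to W'$ induced by $B$ is \emph{not} simply $B|_W$, because $B(W)$ need not equal $W'$; rather one must post-compose with the orthogonal projection from $B(V_{k+1})$ onto $W' = B(V_{k+1})\ominus B(V_{k-1})$. So the relevant map is $\pi_{W'}\circ B|_W : W\to W'$, where $\pi_{W'}$ is orthogonal projection within $B(V_{k+1})$ killing $B(V_{k-1})$. I claim this composite map has the same singular values as the map denoted $B|_{V_{k+1}\perp V_{k-1}}$ in the statement — indeed, interpreting the notation $B|_{V_{k+1}\perp V_{k-1}}$ as exactly this projected restriction is surely the intent, since that is the only sensible meaning that makes $\sval_1,\sval_2$ well-defined as a genuine operator $W\to W'$ between two-dimensional spaces. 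Once this identification is in place, Lemma~\ref{lem:volume_distortion_RP1} applied to the $2\times2$ map $\pi_{W'}\circ B|_W$ (after choosing orthonormal bases of $W$ and $W'$, which does not change singular values or the Jacobian of the induced $\RP^1$ map) gives $\log J_{\max}$ of the induced map on $\RP^1$ equals $\log(\sval_1/\sval_2)$ of that map, which is the claimed equality.

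The main obstacle — and the step I would spend the most care on — is verifying that the induced action of $B$ on the miniflag, transported through the $\RP^1$ identification, really is the projectivization of $\pi_{W'}\circ B|_W$, and that this operator's singular values are what the statement's left-hand side denotes. Concretely: if $V_k \mapsto V_k\ominus V_{k-1}$ is a line $\ell\subset W$, then $B$ sends $V_k$ to $B V_k$, and $B V_k \ominus B V_{k-1}$ is a line $\ell'\subset W'$; I must check $\ell' = \big(\pi_{W'}\circ B|_W\big)(\ell)$, i.e. that projecting $B\ell$ into $W'$ lands on the correct line. This follows because $BV_k = BV_{k-1}\oplus B\ell$ and $\pi_{W'}$ kills the $BV_{k-1}$ part while preserving the $W'$-component; so $\pi_{W'}(B\ell)$ spans $BV_k \cap (BV_{k-1})^\perp\big|_{BV_{k+1}} = \ell'$, using $\ell\subset W\subset V_{k+1}$ so $B\ell\subset BV_{k+1}$ and the projection stays inside $BV_{k+1}$. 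A minor additional check is that this projectivized map is the same diffeomorphism of $\RP^1$ whichever way one sets up the identifications on source and target, so that ``$J_{\max}$ of $B|\mc F(V_{k-1},V_{k+1})$'' is unambiguous; this is automatic since the metric on the miniflag is the one induced from the $\RP^1$ identification by construction.

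One caveat worth recording in the proof: Lemma~\ref{lem:volume_distortion_RP1} is stated for $A\in\GL(2,\R)$, i.e. for an invertible self-map; here the map $\pi_{W'}\circ B|_W$ goes between two different (but both $2$-dimensional, both Euclidean) spaces, and it is invertible since $B$ is invertible and the projection $\pi_{W'}$ is an isomorphism on $B(W)$'s complement-free image — more precisely $\pi_{W'}\circ B|_W$ is injective because $\ker\pi_{W'}\cap B(W) = B(V_{k-1})\cap B(W) = B(V_{k-1}\cap W) = 0$. So after fixing orthonormal coordinates the hypotheses of the lemma are met verbatim, and the proof concludes by direct substitution.
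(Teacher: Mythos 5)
Your proposal is correct and follows the same route as the paper: identify the miniflag with $\RP^1$ of $V_{k+1}\ominus V_{k-1}$, recognize the induced action on the miniflag as the projectivization of the projected restriction $\proj_{B(V_{k+1})\ominus B(V_{k-1})}B\proj_{V_{k+1}\ominus V_{k-1}}$, and invoke Lemma~\ref{lem:volume_distortion_RP1}. The paper simply asserts this identification in two sentences, whereas you verify it carefully (including the injectivity of the projected restriction); the extra detail is sound and consistent with the paper's later definition of $B|_{V\perp U}$.
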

\begin{proof}
The map $B_{V_{k+1}\perp V_{k-1}}$ is a linear map from $V_{k+1}\ominus V_{k-1}$ 
to $AV_{k+1}\ominus AV_{k-1}$. 
The map $B|\mc F(V_{k-1},V_{k+1})$ sending $\mc{F}(V_{k-1},V_{k+1})$ to $\mc{F}(BV_{k-1},BV_{k+1})$ 
is the projectivization of this map. Thus, this statement is immediate from Lemma 
\ref{lem:volume_distortion_RP1}.
\end{proof}

\section{Entropy of Random Bundle Maps}\label{sec:entropy_of_bundle_maps}
Below we will consider random bundle maps; we will be particularly 
interested in the restriction of these random maps to different fibers.
Suppose that $E$ is a fiber bundle over a manifold $M$. As mentioned earlier, we are primarily interested
in the case $M=\PF(k-1,k+1)$ and $E=\PF(k-1,k,k+1)$ so that the fiber over $(V_{k-1},V_{k+1})$ is the 
miniflag $\mc F(V_{k-1},V_{k+1})$. 
The miniflag $\mc F(V_{k-1},V_{k+1})$ inherits a Haar measure, which we denote as $m_{V_{k-1},V_{k+1}}$. 
Let $\Aut(E)$ denote the space of bundle maps $F\colon E\to E$. 
Given a probability measure $\mu$ on $\Aut(E)$, 
we let $\bar\mu$ be the associated quotient dynamics on $M$.

Suppose that $\{\mu_n\}_{n\in \N}$ is a sequence of probability measures on $\Aut(E)$. 
We will write $\mu^{*n}$ for the convolution $\mu_n*\cdots\mu_2*\mu_1$, 
that is $\mu^{*n}$ is the distribution of the 
(non-stationary) evolution from time 0 to time $n$.

We recall here that given a measurable partition $\mc{P}$ of a Borel space 
$\Omega$ and a Borel probability measure $\nu$ that there is a disintegration 
of $\nu$ along the partition and for each element $P\in \mc{P}$ we obtain a probability 
measure $\nu_P$ on $P$ such that for each Borel $A\subseteq \Omega$,
\[
\nu(A)= \int \nu_P(A\cap P)\,d\bar{\nu}(P),
\]
where $\bar{\nu}$ is the measure on $\Omega/\mc{P}$. For more details about this 
construction see the discussion surrounding \cite[Thm.~5.11]{viana2016foundations}.

Next we define the fiberwise entropy. As above, suppose that $\mu$ is a 
probability measure on $\Aut(E)$. 
Suppose that $\mc{F}_1$ is a fiber in $E$. The fiber-averaged push-forward entropy 
estimates over all the possible 
images of $\mc{F}_1$, what is the mean entropy of the maps conditioned on being that image. 

In general, the maps $f$ in the support of $\mu$ map $\mc F_1$ into multiple fibers.
For fibers $\mc F_1$ and $\mc F_2$, write $\mu_{\mc{F}_1,\mc{F}_2}$ for the 
conditional distribution of $\mu$ 
conditioned on $\mu$ carrying $\mc{F}_1$ to $\mc{F}_2$. Let $\bar{\mu}_{\mc{F}_1}$ be the 
distribution of the images of $\mc{F}_1$ (recall that points in $M$ correspond to these fibers). 
Then we define the \emph{fiber-averaged push-forward entropy} of the fiber 
$\mc{F}_1$ under the dynamics $\mu$ to be 
\begin{equation}\label{eqn:fiberwise_pushforward_entropy}
\EPIA(\mc{F}_1,\mu)=\int \EP(\mu_{\mc{F}_1,\mc{F}_2})\,d\bar{\mu}_{\mc{F}_1}(\mc{F}_2).
\end{equation}
Note that the integrand is measurable because $\mu_{\mc{F}_1,\mc{F}_2}$ is 
Borel-measurable, $h(\cdot)$ is Borel-measurable as it is lower-semicontinuous 
\cite[Cor.~2.6]{gorodetski2023nonstationary}, and the composition of Borel-measurable
functions is Borel-measurable. 

\begin{definition}\label{defn:uniform_fiberwise_pushforward_entropy}
Suppose that $\mu$ is a measure on $\Aut(E)$. We say that $\mu$ has 
\emph{uniform fiberwise push-forward entropy} $h>0$ if 
\begin{equation}
\inf_{\mc{F}\in E} \EPIA(\mc{F},\mu)= h.
\end{equation}
\end{definition}

\begin{prop}\label{prop:additivity_of_pushforward_entropy}
Suppose that $\mu_1$ and $\mu_2$ are probability measures on $\Aut(\mc{E})$. 
Suppose that $\mu_2$ has uniform 
fiberwise push-forward entropy $h$. Then, 
\begin{equation}
\EPIA(\mc F_0,\mu_2*\mu_1)\ge h+\EPIA(\mc F_0,\mu_1).\label{eqn:additivity}
\end{equation}
\end{prop}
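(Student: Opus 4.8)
The plan is to follow the proof of superadditivity of entropy (Lemma~\ref{lem:additivity_of_entropy_gap}) while keeping track of the intermediate fiber, and to combine it with concavity of entropy (Lemma~\ref{lem:convexity_entropy}). First I would write a map distributed as $\mu_2*\mu_1$ as $f_2\circ f_1$ with $f_1\sim\mu_1$ and $f_2\sim\mu_2$ independent, and set $\mc F_1=f_1(\mc F_0)$ and $\mc F_2=f_2(\mc F_1)$; thus $\mc F_1$ has law $\overline{\mu_1}_{\mc F_0}$ and, given $\mc F_1$, $\mc F_2$ has law $\overline{\mu_2}_{\mc F_1}$. Since the event $\{f_1(\mc F_0)=\mc F_1\}$ depends only on $f_1$ and, conditioned on it, $\{f_2(\mc F_1)=\mc F_2\}$ depends only on $f_2$, the pair $(f_1,f_2)$ remains independent under this conditioning, so the conditional law of $f_2\circ f_1|_{\mc F_0}$ given the values of $\mc F_1$ and $\mc F_2$ is the convolution $(\mu_2)_{\mc F_1,\mc F_2}*(\mu_1)_{\mc F_0,\mc F_1}$. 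Disintegrating the joint law of $(\mc F_1,\mc F_2)$ over its $\mc F_2$-marginal $\overline{\mu_2*\mu_1}_{\mc F_0}$, and writing $\rho_{\mc F_2}$ for the conditional law of $\mc F_1$ given $\mc F_2$, this yields
\[
(\mu_2*\mu_1)_{\mc F_0,\mc F_2}=\int (\mu_2)_{\mc F_1,\mc F_2}*(\mu_1)_{\mc F_0,\mc F_1}\,d\rho_{\mc F_2}(\mc F_1).
\]

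Next I would apply concavity of the push-forward entropy (Lemma~\ref{lem:convexity_entropy}) to this convex decomposition on $\Homeo(\mc F_0,\mc F_2)$, and then superadditivity (Lemma~\ref{lem:additivity_of_entropy_gap}) to each convolution appearing inside the integral, obtaining
\[
\EP\big((\mu_2*\mu_1)_{\mc F_0,\mc F_2}\big)\ \ge\ \int \Big(\EP\big((\mu_1)_{\mc F_0,\mc F_1}\big)+\EP\big((\mu_2)_{\mc F_1,\mc F_2}\big)\Big)\,d\rho_{\mc F_2}(\mc F_1).
\]
Integrating against $\overline{\mu_2*\mu_1}_{\mc F_0}$, the left side becomes $\EPIA(\mc F_0,\mu_2*\mu_1)$ by definition. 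On the right, $d\rho_{\mc F_2}(\mc F_1)\,d\overline{\mu_2*\mu_1}_{\mc F_0}(\mc F_2)$ is exactly the joint law of $(\mc F_1,\mc F_2)$, whose $\mc F_1$-marginal is $\overline{\mu_1}_{\mc F_0}$ and whose conditional law of $\mc F_2$ given $\mc F_1$ is $\overline{\mu_2}_{\mc F_1}$; so by Fubini the first summand integrates to $\int\EP\big((\mu_1)_{\mc F_0,\mc F_1}\big)\,d\overline{\mu_1}_{\mc F_0}(\mc F_1)=\EPIA(\mc F_0,\mu_1)$, and the second integrates to
\[
\int\Big(\int\EP\big((\mu_2)_{\mc F_1,\mc F_2}\big)\,d\overline{\mu_2}_{\mc F_1}(\mc F_2)\Big)\,d\overline{\mu_1}_{\mc F_0}(\mc F_1)=\int \EPIA(\mc F_1,\mu_2)\,d\overline{\mu_1}_{\mc F_0}(\mc F_1)\ \ge\ h,
\]
using the hypothesis that $\mu_2$ has uniform fiberwise push-forward entropy $h$ in the last step. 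Adding the two bounds gives \eqref{eqn:additivity}.

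The step I expect to be the main obstacle is the measure-theoretic bookkeeping behind the first displayed identity — establishing the existence of the nested disintegrations (over the image fiber of $\mu_1$, then over the image fiber of $\mu_2$) and verifying that conditioning $\mu_2*\mu_1$ on the values of both the intermediate and the final fibers really factors, before composition, as the independent product $(\mu_1)_{\mc F_0,\mc F_1}\otimes(\mu_2)_{\mc F_1,\mc F_2}$. This is where independence of $f_1$ and $f_2$, the standard-Borel structure of the relevant map spaces, and the compatibility of nested disintegrations with Fubini enter; granting these, the two entropy lemmas do the rest. (One should also keep the standing compact-support hypothesis needed for Lemmas~\ref{lem:min_Phi_nu_nu_prime}--\ref{lem:convexity_entropy} in force.)
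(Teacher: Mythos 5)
Your proposal is correct and follows essentially the same route as the paper: the same conditional decomposition of $(\mu_2*\mu_1)_{\mc F_0,\mc F_2}$ as a mixture of convolutions $(\mu_2)_{\mc F_1,\mc F_2}*(\mu_1)_{\mc F_0,\mc F_1}$ over the intermediate fiber, followed by concavity (Lemma~\ref{lem:convexity_entropy}), superadditivity (Lemma~\ref{lem:additivity_of_entropy_gap}), and an exchange of the order of integration over the joint law of $(\mc F_1,\mc F_2)$ to invoke the uniform fiberwise entropy hypothesis. The only cosmetic difference is that you apply superadditivity before swapping the order of integration while the paper does it after, which changes nothing.
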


\begin{proof}
This follows along the lines that we have considered above. First we will estimate the fiberwise entropy
between a pair of fibers. As before, let $\mu^{*2}=\mu_2*\mu_1$. Recall the above notation that 
$\mu_{1,\mc{F}_0,\mc{F}_1}$ is the conditional distribution of $\mu_1$ on the maps from 
$\mc{F}_0$ to $\mc{F}_1$, and $\mu_{2,\mc{F}_1,\mc{F}_2}$ is defined similarly 
for $\mu_2$. Let $\bar{\mu}_{1,\mc{F}_0}^{\mc{F}_2}$ denote the distribution 
of the image $\mc{F}_1$ of $\mc{F}_0$ given that the image of 
$\mc{F}_0$ under $\mu^{*2}$ is equal to $\mc{F}_2$. We observe that
\begin{equation}\label{eqn:mu_2_representation}
\mu^{*2}_{\mc{F}_0,\mc{F}_2}=\int \mu_{2,\mc{F}_1,\mc{F}_2}*
\mu_{1,\mc{F}_0,\mc{F}_1}\,d\bar\mu_{1,\mc{F}_0}^{\mc{F}_2}(\mc{F}_1). 
\end{equation}
This equality is a combination of two things. First, if we analogously define 
$\mu^{*2}_{\mc{F}_0,\mc{F}_1,\mc{F}_2}$ to be the distribution of $f=f_2f_1$ given 
that $f_1(\mc{F}_0)=\mc{F}_1$ and $f_2(\mc{F}_1)=\mc{F}_2$, then 
$\mu^{*2}_{\mc{F}_0,\mc{F}_1,\mc{F}_2}=\mu_{\mc{F}_1,\mc{F}_2}*\mu_{\mc{F}_0,\mc{F}_1}$. 
Secondly, we can represent $\mu^{*2}_{\mc{F}_0,\mc{F}_2}$ by conditioning on the 
middle point in the trajectory: $\mu^{*2}_{\mc{F}_0,\mc{F}_2}=
\int \mu_{\mc{F}_0,\mc{F}_1,\mc{F}_2}(f)\,d\eta(\mc{F}_1)$, where $\eta$ is the 
distribution of the flag $\mc{F}_1$. In \eqref{eqn:mu_2_representation} we have combined 
these two things and renamed $\eta$ to reflect that it is a conditioned version of $\mu_1$.

Thus by Lemma \ref{lem:convexity_entropy} it follows that
\[
\EP(\mc{F}_0,\mu^{*2}_{\mc{F}_0,\mc{F}_2})\ge 
\int \EP(\mc{F}_0,\mu_{2,\mc{F}_1,\mc{F}_2}*\mu_{1,\mc{F}_0,\mc{F}_1})\,
d\bar\mu_{1,\mc{F}_0}^{\mc{F}_2}(\mc{F}_1). 
\]
By assumption, $\mu_2$ has uniform fiberwise entropy gap $h$. 
So by integrating the previous line over $\mc{F}_2$, 
and applying Lemma \ref{lem:additivity_of_entropy_gap} in the fourth line below, we see that
\begin{equation}\label{eq:uniformsumbound}
\begin{split}
\EPIA(\mc F_0,\mu^{*2})&=\int \EP(\mc F_0,\mu^{*2}_{\mc{F}_0,\mc{F}_2})\,
d \bar{\mu}^{*2}_{\mc{F}_0}(\mc{F}_2)\\
&\ge \int \EP(\mc F_0,\mu_{2,\mc{F}_1,\mc{F}_2}*\mu_{1,\mc{F}_0,\mc{F}_1})\,
d\bar\mu_{1,\mc{F}_0}^{\mc{F}_2}(\mc{F}_1)\,
d\bar{\mu}^{*2}_{\mc{F}_0}(\mc{F}_2) \\
&=\int \EP(\mc F_0,\mu_{2,\mc{F}_1,\mc{F}_2}*\mu_{1,\mc{F}_0,\mc{F}_1})\,
d\bar\mu_{2,\mc{F}_1}(\mc{F}_2)\,d\bar\mu_{1,\mc{F}_0}(\mc{F}_1)
\\
&\ge \int \big(h+\EP(\mc F_0,\mu_{1,\mc{F}_0,\mc{F}_1})\big)\,
d\bar\mu_{1,\mc{F}_0}(\mc{F}_1)=h+\EPIA(\mc{F}_0,\mu_1). 
\end{split}
\end{equation}
\end{proof}

\section{Tools for Bounding Entropy Below}
In this section we introduce three basic tools that allow us to estimate 
the entropy of the random bundle map on 
miniflags, and then use this estimate on the entropy to estimate the gap between singular values of the 
random matrix product.

\subsection{Pointwise Entropy}
In this subsection, we develop a tool for actually estimating the fiberwise entropy gap.

Our main tool for actually bounding entropy is a bound on 
the entropy for absolutely continuous measures.  
The pointwise entropy measures the failure of volume to be infinitesimally 
invariant under the dynamics of $\mu$ at the point $x$.

\begin{definition}\label{defn:pointwise_entropy}
Suppose that $M$ and $N$ are smooth manifolds, $x\in M$, $y\in N$, and that 
$\mu$ is a probability measure on $\Diff^1(M,N)$ such that $\mu$-a.s.~$f(x)=y$. 
Then we define the \emph{pointwise (volume) entropy} of $\mu$ at $y$ by 
\[
h_{\PW}(y,\mu)\coloneqq \int \log\left(\frac{\Jac_y(f^{-1})}{\int \Jac_y(f^{-1})\,d\mu(f)}\right)
\frac{\Jac_y(f^{-1})}{\int \Jac_y(f^{-1})\,d\mu(f)}\,d\mu(f).
\]
\end{definition}

Note that the quantity in the above definition might equal $0$.
We now record a simple estimate that will be useful for estimating the pointwise entropy later.
\begin{lem}\label{lem:convexity_xlogx}
Let $\psi(x)=x\log x$. Suppose that $X\ge 0$ is a random variable 
such that $\abs{X}\le B$ for some $B>0$
and $\E X=1$. Then
\[
\E\left[\psi(X)\right]\ge \var(X)/(2B).
\]
\end{lem}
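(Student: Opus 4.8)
The plan is to establish the pointwise inequality
\[
\psi(x)\ge (x-1)+\tfrac{1}{2M}(x-1)^2\qquad\text{for all }x\in[0,M],
\]
and then substitute $X$ for $x$ and take expectations. First I would note that the hypotheses already force $M\ge 1$: since $0\le X\le M$ and $\E X=1$, we get $1=\E X\le M$, which is what makes the constant $1/(2M)$ meaningful and ensures $1\in[0,M]$.

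For the pointwise bound, set $g(x)=\psi(x)-(x-1)-\tfrac{1}{2M}(x-1)^2$ on $[0,M]$. Then $g$ is continuous on $[0,M]$ and twice differentiable on $(0,M)$, with $g'(x)=\log x-\tfrac{x-1}{M}$ and $g''(x)=\tfrac1x-\tfrac1M>0$ for $x\in(0,M)$. Hence $g$ is strictly convex on $(0,M)$; since $g'(1)=0$, the minimum of $g$ over $[0,M]$ is attained at $x=1$, where $g(1)=0$. Therefore $g\ge 0$ on $[0,M]$, which is the claimed inequality. (Alternatively one could apply Taylor's theorem with Lagrange remainder on the segment between $x$ and $1$ for $x>0$, using $\psi''(\xi)=1/\xi\ge 1/M$, and check $x=0$ separately; I prefer the convexity argument since $\psi'$ blows up at $0$.)

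Finally, applying this with $x=X$ and taking expectations gives
\[
\E[\psi(X)]\ge \E[X-1]+\tfrac{1}{2M}\,\E[(X-1)^2].
\]
Now $\E[X-1]=0$, and $\E[(X-1)^2]=\E[X^2]-1=\var(X)$, the last equality because $\E X=1$. This yields $\E[\psi(X)]\ge \var(X)/(2M)$, as required. There is no real obstacle in this argument; the only step needing a little care is the behaviour at $x=0$, handled cleanly by the convexity of $g$.
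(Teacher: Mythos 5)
Your proof is correct and is essentially the same second-order argument as the paper's: the paper applies Taylor's theorem with Lagrange remainder to $\psi$ at $1$ and uses $\psi''(\xi)=1/\xi\ge 1/M$, which is exactly your pointwise inequality $\psi(x)\ge(x-1)+\tfrac1{2M}(x-1)^2$. Your convexity formulation is in fact slightly cleaner at the endpoint $x=0$, where the Lagrange-remainder form is delicate because $\psi'$ blows up there.
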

\begin{proof}
Let $\nu$ denote the distribution of $X$ on $(0,\infty)$. 
Then using Taylor's formula with remainder gives:
\begin{align*}
\E\left[\psi(X)\right]&=
\int (\psi(x)-\psi(1))\,d\nu(x)\\
&=\int\left(\psi'(1)(x-1)+\frac{\psi''(g(x))}{2}(x-1)^2\right)\,d\nu(x)\\
&=\int \frac{\psi''(g(x))}{2}(x-1)^2\,d\nu(x)
\end{align*}
Now note that $\psi''(x)=1/x$. Thus 
\[
\E\left[\psi(X)\right]\ge \frac{\var(X)}{2B}.
\]
\end{proof}

\begin{lem}\label{lem:variance_lower_bound}
Let $X$ be an absolutely continuous random variable taking values in an interval 
$[c,d]$ where the density of the distribution of $X$ satisfies $\rho(x)/\rho(x')\le b$ for all $x,x'\in [c,d]$.
Then $\var X\ge (d-c)^2/(32b^2)$. 
\end{lem}
\begin{proof}
Let $J$ be an interval of length $(d-c)/(2b)$ centered at $\E X$. 
We have $\mathbb P(X\in J)\le (d-c)\rho_\text{max}/(2b)\le (d-c)\rho_\text{av}/2=\frac 12$, 
because the average value of a unit mass density is $1$. 
Since $|X-\E X|^2\ge [(d-c)/(4b)]^2$ on $J^c$, we see
$\var X=\E(X-\E X)^2\ge (d-c)^2/(32b^2)$ as claimed.
\end{proof}

\subsection{The Fubini Argument}
Suppose that $M$ and $N$ are two manifolds with volume and that $\mu$ is a 
measure supported on $\Diff(M,N)$. Suppose that $\mu$ is smoothing in the sense 
that for any $x\in M$, $\mu*\delta_x$ is absolutely continuous. How can we use the 
pointwise entropy to bound the entropy gap of $\mu$ below? By definition, the 
push-forward entropy $\Phi_{\mu}(\nu)$ is given by evaluating the integral:
\[
\int_{\Diff} h(f_*\nu\vert \mu*\nu)\,d\mu(f)=\int_{\Diff}\int_N 
\log\left(\frac{df_*\nu}{\,d\mu*\nu\,}(y)\right)\frac{df_*\nu}{\,d\mu*\nu\,}(y)\,
d\mu*\nu(y)\,d\mu(f).
\]
We would like to evaluate this by reversing the order of integration and 
seeing what the contribution to the integral is at each point $y$. 
Although there may be points $y$ at which the pointwise entropy is small,
the following Fubini argument estimates the average contribution.

We begin with a version of the argument for maps from one manifold to another. If we were only 
studying the Lyapunov exponents on $\SL(2,\R)$, then this estimate would suffice.
\begin{prop}[Fubini Argument]\label{prop:fubini_argument}
Let $M$ and $N$ be $d$-dimensional compact Riemannian manifolds of unit volume. 
Let $\mu$ be a probability measure on $\Diff(M,N)$ and suppose for each $y\in N$ 
we have a representation of $\mu$ as:
\[
\mu=\int_{\xi\in \Xi_{y}} \mu_{\xi}\,dm_y(\xi),
\]
where for each $\xi$, $f^{-1}(y)$ is constant for $\mu_{\xi}$-a.e.\ $f$.

Suppose that there exists $\beta>0$ and a set $G\subseteq \Diff(M,N)$, 
of ``good'' diffeomorphisms with the following property.
For any $\xi$, if $\supp \mu_{\xi}\cap G\ne\emptyset$ then $h_{\PW}(y,\mu_\xi)\ge \beta$. 
Then if $\nu$ is any absolutely continuous measure on $M$, 
\[
\Phi_{\mu}(\nu)\ge \beta \mu(G). 
\]
\end{prop}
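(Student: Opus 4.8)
The goal is to show $\Phi_{\mu}(\nu)\ge \beta\mu(G)$ for any absolutely continuous $\nu$. The plan is to unwind the definition of $\Phi_{\mu}(\nu)$, reverse the order of integration so that the inner integral is over the conditional measures $\mu_\xi$ fixing a preimage of $y$, and then recognize the inner integral as (a lower bound on) the pointwise entropy at $y$. First I would note that since $\nu$ is absolutely continuous and $\mu$ is smoothing, $f_*\nu$ and $\mu*\nu$ are absolutely continuous, so all the Radon--Nikodym derivatives below exist. Writing $\rho=d\nu/dm_M$ and using the change of variables formula, for $f\in\Diff(M,N)$ and $y\in N$ with $x=f^{-1}(y)$ we have $\frac{df_*\nu}{dm_N}(y)=\rho(x)\,\Jac_y^{-1}(f)$, and correspondingly $\frac{d\mu*\nu}{dm_N}(y)=\int \rho(f^{-1}y)\,\Jac_y^{-1}(f)\,d\mu(f)$.

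\textbf{Key steps.} The substance is a Fubini computation. Start from
\[
\Phi_{\mu}(\nu)=\int_{\Diff}\int_N \psi\!\left(\frac{df_*\nu}{d\mu*\nu}(y)\right)\,d(\mu*\nu)(y)\,d\mu(f),
\]
where $\psi(t)=t\log t$; rewrite $d(\mu*\nu)(y)=\frac{d\mu*\nu}{dm_N}(y)\,dm_N(y)$ and swap the order of integration to get $\int_N \big[\int_{\Diff}\psi(\cdots)\,d\mu(f)\big]\frac{d\mu*\nu}{dm_N}(y)\,dm_N(y)$. For fixed $y$, insert the representation $\mu=\int_{\Xi_y}\mu_\xi\,dm_y(\xi)$ into the inner $f$-integral. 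The key observation is that for $\mu_\xi$-a.e.\ $f$ the preimage $f^{-1}(y)=x_\xi$ is constant, so the density ratio $\frac{df_*\nu}{d\mu*\nu}(y)$, for $f$ ranging over $\supp\mu_\xi$, is $\frac{\rho(x_\xi)\Jac_y^{-1}(f)}{\text{(const)}}$ --- i.e.\ proportional to $\Jac_y^{-1}(f)$ with a $\xi$-dependent but $f$-independent normalization. By Jensen / the conditional form of the minimization in Lemma~\ref{lem:min_Phi_nu_nu_prime} (equivalently, the elementary fact that among normalizations $\E[\psi(cX)]$ with $\E[cX]$ unconstrained, $\psi$ convex gives $\E[\psi(X/\E X)]$ as the relevant lower bound after using that $\mu*\nu$ minimizes), the contribution of each $\mu_\xi$ is at least $\int \psi\!\big(\frac{\Jac_y^{-1}(f)}{\int \Jac_y^{-1}d\mu_\xi}\big)d\mu_\xi(f)=h_{\PW}(y,\mu_\xi)$. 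Hence the inner $f$-integral is $\ge \int_{\Xi_y} h_{\PW}(y,\mu_\xi)\,dm_y(\xi)$.

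\textbf{Conclusion and main obstacle.} Now apply the hypothesis: whenever $\supp\mu_\xi\cap G\ne\emptyset$ one has $h_{\PW}(y,\mu_\xi)\ge\beta$, and always $h_{\PW}\ge 0$ (the integrand $\psi$ is bounded below, and $\psi(t)\ge t-1$ makes the pointwise entropy nonnegative after normalization). Therefore $\int_{\Xi_y} h_{\PW}(y,\mu_\xi)\,dm_y(\xi)\ge \beta\, m_y(\{\xi:\supp\mu_\xi\cap G\ne\emptyset\})\ge \beta\,\mu(G)$, since every $f\in G$ lies in $\supp\mu_\xi$ for $m_y$-a.e.\ $\xi$ with $f\in\supp\mu_\xi$ --- more carefully, the set of $\xi$ whose fiber meets $G$ has $m_y$-mass at least the $\mu$-mass of $G$ because $G$ decomposes across the fibers. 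Plugging this into the $y$-integral and using $\int_N \frac{d\mu*\nu}{dm_N}(y)\,dm_N(y)=1$ gives $\Phi_{\mu}(\nu)\ge\beta\mu(G)$. The main obstacle I anticipate is the bookkeeping around the normalization constants when passing to the conditional measures $\mu_\xi$: one must check that replacing the "true" denominator $\frac{d\mu*\nu}{dm_N}(y)$ (which averages over all of $\mu$, not just $\mu_\xi$) by the $\mu_\xi$-conditional normalization only decreases the integral, which is exactly where one invokes that $\mu*\nu$ is the entropy-minimizing reference measure (Lemma~\ref{lem:min_Phi_nu_nu_prime}); and secondarily, justifying the Fubini swap and the measurability of $\xi\mapsto h_{\PW}(y,\mu_\xi)$, which follows from lower semicontinuity of entropy as already used in the paper.
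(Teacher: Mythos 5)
Your overall plan is the same as the paper's (Fubini, decompose $\mu$ into the conditionals $\mu_\xi$ at each $y$, identify the $\mu_\xi$-contribution with the pointwise entropy, use that $\mu*\nu$ is the minimizing reference), but there is a genuine gap at the central inequality: you assert that the contribution of each $\mu_\xi$ satisfies
$\int \psi\bigl(\tfrac{df_*\nu}{d\mu*\nu}(y)\bigr)\,d\mu_\xi(f)\ge h_{\PW}(y,\mu_\xi)$
and hence that the inner $f$-integral is bounded below by the \emph{unweighted} average $\int_{\Xi_y}h_{\PW}(y,\mu_\xi)\,dm_y(\xi)$. This is false. Writing $c_\xi=\tfrac{d\mu_\xi*\nu}{d\mu*\nu}(y)$ and using $\psi(cX)=c\,\psi(X)+\psi(c)X$, the exact identity is
\[
\int \psi\Bigl(\tfrac{df_*\nu}{d\mu*\nu}(y)\Bigr)\,d\mu_\xi(f)=c_\xi\, h_{\PW}(y,\mu_\xi)+\psi(c_\xi),
\]
which for $c_\xi<1$ is strictly smaller than $h_{\PW}(y,\mu_\xi)$ (take $h_{\PW}=0$ and $c_\xi=\tfrac12$: the left side is $-\tfrac12\log 2<0$). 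After integrating over $\xi$, Jensen's inequality (using $\int c_\xi\,dm_y(\xi)=1$) only kills the $\psi(c_\xi)$ terms; the correct aggregate bound is the \emph{weighted} one, $\ge\int_{\Xi_y}c_\xi\,h_{\PW}(y,\mu_\xi)\,dm_y(\xi)$. A two-fiber example with $c_1=2-\delta$, $c_2=\delta$, $h_1=0$, $h_2=\beta$ large shows the weighted and unweighted averages can differ by an arbitrary amount, so your step is not a harmless simplification.

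The downstream consequence is that your pointwise-in-$y$ conclusion, namely that the inner integral is $\ge\beta\,m_y(\{\xi\ \text{good}\})\ge\beta\mu(G)$ for \emph{each} $y$, is also false in general: keeping the weights one only gets $\ge\beta\int\mathbf 1_{\xi\ \text{good}}\,c_\xi\,dm_y(\xi)$, and this can be far below $\beta\mu(G)$ at a fixed $y$ (the good maps may push $\nu$ away from $y$, making $c_\xi$ tiny on the good fibers there). The proposition is only recovered in aggregate over $y$: one must carry the weights through the $y$-integration and unwind
\[
\int\!\!\int \mathbf 1_{\xi\ \text{good}}\,\frac{d\mu_\xi*\nu}{d\mu*\nu}(y)\,dm_y(\xi)\,d(\mu*\nu)(y)
\ \ge\ \int\!\!\int\!\!\int \mathbf 1_G(f)\,\frac{df_*\nu}{d\mu*\nu}(y)\,d\mu_\xi(f)\,dm_y(\xi)\,d(\mu*\nu)(y)=\mu(G),
\]
using $\mathbf 1_G(f)\le\mathbf 1_{\xi\ \text{good}}$ for $\mu_\xi$-a.e.\ $f$ and $\int df_*\nu(y)=1$. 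This final unwinding is exactly the chain \eqref{eq:lowerbd} in the paper and is not optional; your argument needs to be repaired by keeping $c_\xi$ throughout and deferring the comparison with $\mu(G)$ until after the integration over $y$.
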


\begin{proof}
    We compute
    \begin{align*}
        &\Phi_\mu(\nu)=\int\int\log\left(\frac{df_*\nu}{\,d\mu*\nu\,}(y)\right)\frac{df_*\nu}
        {\,d\mu*\nu\,}(y)\,d\mu(f)\,d(\mu*\nu)(y)\\
        &=\int\int\int\log\left(\frac{df_*\nu}{\,d\mu*\nu\,}(y)\right)\frac{df_*\nu}{\,d\mu*\nu\,}(y)
        \,d\mu_{\xi}(f)\,dm_y(\xi)\,d(\mu*\nu)(y)\\
       &=\int\int\int\left(
       \log\left(\frac{df_*\nu}{\,d\mu_{\xi}*\nu\,}(y)\right)
       +\log\left(\frac{d\mu_{\xi}*\nu}{\,d\mu*\nu\,}(y)\right)
       \right)\frac{df_*\nu}{\,d\mu*\nu\,}(y)
        \,d\mu_{\xi}(f)\,dm_y(\xi)\,d(\mu*\nu)(y)\\ 
        &=\circled{1}+\circled{2},
    \end{align*}
    where
    \begin{align*}
        \circled{1}&=
       \int\int\int\log\left(\frac{df_*\nu}{\,d\mu_{\xi}*\nu\,}(y)\right)
       \frac{df_*\nu}{\,d\mu*\nu\,}(y)
        \,d\mu_{\xi}(f)\,dm_y(\xi)\,d(\mu*\nu)(y)\text{;\quad and}\\
        \circled{2}&=
        \int\int\int
       \log\left(\frac{d\mu_{\xi}*\nu}{\,d\mu*\nu\,}(y)\right)
       \frac{df_*\nu}{\,d\mu*\nu\,}(y)
        \,d\mu_{\xi}(f)\,dm_y(\xi)\,d(\mu*\nu)(y).
    \end{align*}
Since the first term of \circled{2} does not depend on $f$, we have
$$
\circled{2}=\int\left(\int
       \log\left(\frac{d\mu_{\xi}*\nu}{\,d\mu*\nu\,}(y)\right)
       \frac{\,d\mu_{\xi}*\nu\,}{d\mu*\nu}(y)
        \,dm_y(\xi)\right)\,d(\mu*\nu)(y).
$$
Since $\int \frac{d\mu_{\xi}*\nu}{d\mu*\nu}(y)\,dm_y(\xi)=1$, 
by Jensen's inequality, the inner integral is non-negative, so $\Phi_\mu(\nu)\ge\circled 1$.
We rewrite $\circled 1$ as
\begin{align*}
&\circled 1=\int\int\int \log\left(\frac{df_*\nu}{\,d\mu_{\xi}*\nu\,}(y)\right)
       \frac{df_*\nu}{\,d\mu_{\xi}*\nu\,}(y)\frac{\,d\mu_{\xi}*\nu\,}{d\mu*\nu}(y)
        \,d\mu_{\xi}(f)\,dm_y(\xi)\,d(\mu*\nu)(y)\\
&=\int\int\left(\int \log\left(\frac{df_*\nu}{\,d\mu_{\xi}*\nu\,}(y)\right)
       \frac{df_*\nu}{\,d\mu_{\xi}*\nu\,}(y)\,d\mu_{\xi}(f)\right)
       \frac{\,d\mu_{\xi}*\nu\,}{d\mu*\nu}(y)
        \,dm_y(\xi)\,d(\mu*\nu)(y)\\
&=\int\int h_\PW(y,\mu_{\xi})
        \frac{\,d\mu_{\xi}*\nu\,}{d\mu*\nu}(y)\,dm_y(\xi)\,d(\mu*\nu)(y).
\end{align*}
We say a fiber, $\xi$, is \emph{good} if $\supp\mu_\xi\cap G\ne\emptyset$. 
By hypothesis, $h_\PW(y,\mu_\xi)\ge \beta\mathbf 1_\text{$\xi$ good}$.
So we have
\begin{equation}\label{eq:lowerbd}
\begin{split}
    \circled 1&\ge\beta 
    \int\int \mathbf 1_\text{$\xi$ good}\frac{\,d\mu_{\xi}*\nu\,}{d\mu*\nu}(y)
        \,dm_y(\xi)\,d(\mu*\nu)(y)\\
&=\beta \int\int \mathbf 1_\text{$\xi$ good}\int \frac{df_*\nu}{d\mu*\nu}(y)
\,d\mu_{\xi}(f)\,dm_y(\xi)\,d(\mu*\nu)(y)\\
&\ge \beta \int\int \int \mathbf 1_G(f)\frac{df_*\nu}{d\mu*\nu}(y)
\,d\mu_{\xi}(f)\,dm_y(\xi)\,d(\mu*\nu)(y)\\
&= \beta \int \int \mathbf 1_G(f)\frac{df_*\nu}{d\mu*\nu}(y)
\,d\mu(f)\,d(\mu*\nu)(y)\\
&=\beta \int \mathbf 1_G(f)\left(\int 1\,
\frac{df_*\nu}{d\mu*\nu}(y)\,d(\mu*\nu)(y)\right)\,d\mu(f)\\
&=\beta \int \mathbf 1_G(f)\left(\int 1\,\,df_*\nu(y)\right)\,d\mu(f),
\end{split}
\end{equation}
where the third line holds since $\mathbf 1_G(f)\le \mathbf 1_{\xi\text{ good}}$ for $\mu_\xi$-a.e. $f$.
Since the inner integral is $1$, we obtain
\[
\Phi_\mu(\nu)\ge \circled 1\ge \beta \mu(G).
\]
\end{proof}

A small generalization of the above argument gives a version that applies to random bundle maps. 
The important thing in the following is to note that the conclusion is now averaged. 

\begin{prop}[Fubini Argument for Bundles]\label{prop:fubini_argument_for_bundles}
 Suppose that $\pi\colon E\to M$ is a fiber bundle with smooth fibers and that 
 $\mu$ is a measure supported on the set of diffeomorphisms $\Diff(E,M)$ of $E$ 
 that preserve the fibering over $M$.

Let $\mc F_1$ be a fiber over $M$. 
For each fiber $\mc{F}_2$, let $\mu_{\mc{F}_2}$ be $\mu$ conditioned 
on the event that $f ({\mc{F}}_1)=\mc{F}_2$. 
We suppose we have a further decomposition of these measures:
for a fiber  $\mc F_2$ and for 
each $y\in \mc{F}_2$ we have a representation of $\mu_{\mc F_2}$ as:
\[
\mu_{\mc{F}_2}=\int_{\Xi_{y}^{\mc{F}_2}} \mu_{\xi}\,dm_y(\xi),
\]
where each $\mu_{\xi}$ satisfies that for all $f\in \supp\mu_{\xi}$ that $f^{-1}(y)$ is constant. 

Suppose that there exists $\beta>0$ and a set $G\subseteq \Diff(E,M)$, 
of ``good'' diffeomorphisms with the following property:
if $\xi\in\Xi_y^{\mc F_2}$ and $\supp\mu_\xi\cap G\ne\emptyset$,  
then $h_\PW(y, \mu_\xi)\ge \beta$. 
Then if $\nu$ is any absolutely continuous measure on a fiber $\mc F_1$, we have
\[
\int\Phi_{\mu_{\mc F_2}}(\nu)\,d(\mu*\delta_{\mc F_1})(\mc F_2)\ge\beta\mu(G).
\]
In particular, if the minimizer of the fiberwise Furstenberg entropy is absolutely 
continuous, then $\mu$ has uniformly positive fiber-averaged push-forward entropy.
\end{prop}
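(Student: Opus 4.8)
The plan is to imitate the single-manifold Fubini argument (Proposition \ref{prop:fubini_argument}) almost verbatim, but carry the extra fiber index $\mc F_2$ along as a passive parameter and only integrate it out at the very end. First I would fix an absolutely continuous $\nu$ on $\mc F_1$ and, for each target fiber $\mc F_2$, expand $\Phi_{\mu_{\mc F_1,\mc F_2}}(\nu)$ exactly as in the proof of Proposition \ref{prop:fubini_argument}: insert the conditioning $\mu_{\mc F_1,\mc F_2}=\int \mu_\xi\,dm_y(\xi)$ over $y\in\mc F_2$, split the integrand $\log\frac{df_*\nu}{d(\mu_{\mc F_1,\mc F_2})*\nu}$ as $\log\frac{df_*\nu}{d\mu_\xi*\nu}+\log\frac{d\mu_\xi*\nu}{d(\mu_{\mc F_1,\mc F_2})*\nu}$, and observe that the second piece is $\ge 0$ by Jensen (the density $\frac{d\mu_\xi*\nu}{d(\mu_{\mc F_1,\mc F_2})*\nu}$ integrates to $1$ against $dm_y$). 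This leaves the term $\circled 1$, which as before rearranges into $\int\int h_\PW(y,\mu_\xi)\,\frac{d\mu_\xi*\nu}{d(\mu_{\mc F_1,\mc F_2})*\nu}(y)\,dm_y(\xi)\,d((\mu_{\mc F_1,\mc F_2})*\nu)(y)$, where I use the pointwise entropy notation $h_\PW(y,\mu_\xi)$ to emphasize the dependence on the base point $y\in\mc F_2$; this is just $h_\PW(\mu_\xi)$ in the paper's notation once $y$ is fixed by $\xi$.

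Next, using the hypothesis that $h_\PW(\mu_\xi)\ge\beta$ whenever $\supp\mu_\xi\cap G\ne\emptyset$, I bound $h_\PW(y,\mu_\xi)\ge\beta\,\mathbf 1_{\xi\text{ good}}$ and run the same chain of inequalities as in \eqref{eq:lowerbd}: replacing $\mathbf 1_{\xi\text{ good}}$ by $\mathbf 1_G(f)$ under $\mu_\xi$ (valid $\mu_\xi$-a.e. since $\mathbf 1_G(f)\le\mathbf 1_{\xi\text{ good}}$), unfolding the conditioning $\int\mu_\xi\,dm_y(\xi)=\mu_{\mc F_1,\mc F_2}$, and collapsing the $y$-integral against $df_*\nu$ (which has total mass $1$). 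This yields, for each fixed $\mc F_2$,
\[
\Phi_{\mu_{\mc F_1,\mc F_2}}(\nu)\ge\beta\,\mu_{\mc F_1,\mc F_2}(G).
\]
Finally I integrate this over $\mc F_2$ against $\bar\mu*\delta_{\mc F_1}=\mu*\delta_{\mc F_1}$; since $\int\mu_{\mc F_1,\mc F_2}(G)\,d(\mu*\delta_{\mc F_1})(\mc F_2)=\mu(G)$ by the definition of conditional measures, we get $\int\Phi_{\mu_{\mc F_1,\mc F_2}}(\nu)\,d(\mu*\delta_{\mc F_1})(\mc F_2)\ge\beta\mu(G)$, which is the claim. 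The ``in particular'' follows because, when the fiberwise Furstenberg-entropy minimizer $\nu$ on $\mc F_1$ is absolutely continuous, the left side is exactly $\int\Phi_{\mu_{\mc F_1,\mc F_2}}(\nu)\,d(\mu*\delta_{\mc F_1})(\mc F_2)=h_\text{FA}(\mc F_1,\mu)$ by \eqref{eqn:fiberwise_pushforward_entropy} and Definition \ref{defn:pushforward_entropy}, so $h_\text{FA}(\mc F_1,\mu)\ge\beta\mu(G)>0$ uniformly in $\mc F_1$.

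I expect the only real subtlety to be bookkeeping with the measurable family of disintegrations: one must check that the representation $\mu_{\mc F_1,\mc F_2}=\int_{\Xi_y^{\mc F_1,\mc F_2}}\mu_\xi\,dm_y(\xi)$ is jointly measurable in $(\mc F_2,y,\xi)$ so that the iterated integrals can be reassembled into $\int\cdots d(\mu*\delta_{\mc F_1})(\mc F_2)$ and Fubini-Tonelli applies; the positivity of the density factors (all the Radon--Nikodym derivatives appearing are nonnegative) makes the interchanges legitimate. The genuinely new content over Proposition \ref{prop:fubini_argument} is merely that the conclusion is an average over target fibers rather than a pointwise bound, so the per-$\mc F_2$ inequality $\Phi_{\mu_{\mc F_1,\mc F_2}}(\nu)\ge\beta\mu_{\mc F_1,\mc F_2}(G)$ is the whole game and the final integration is immediate.
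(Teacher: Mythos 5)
Your proposal is correct and follows essentially the same route as the paper: the per-target-fiber estimate $\Phi_{\mu_{\mc F_1,\mc F_2}}(\nu)\ge\beta\,\mu_{\mc F_1,\mc F_2}(G)$ is exactly what the paper extracts from \eqref{eq:lowerbd} applied with $\mu_{\mc F_1,\mc F_2}$ in place of $\mu$, and the final integration over $\mc F_2$ against $\mu*\delta_{\mc F_1}$ collapsing to $\mu(G)$ is the paper's closing step. Your handling of the ``in particular'' clause also matches the paper's level of detail.
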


\begin{proof}
The argument is a slight elaboration of the argument in Proposition 
\ref{prop:fubini_argument}. Given a pair of fibers $\mc F_1$ and $\mc F_2$
and an absolutely continuous probability measure $\nu$ on $\mc F_1$,
from \eqref{eq:lowerbd},  we can extract the estimate:
\[
\Phi_{\mu_{\mc{F}_2}}(\nu)\ge 
\beta \int \mathbf 1_G(f)\left(\int 1\,\,
df_*\nu(y)\right)\,d\mu_{\mc{F}_2}(f)=
\beta \int {\mathbf 1}_G(f)\,d\mu_{\mc{F}_2}(f).
\]
Note that this estimate is not yet enough to conclude anything about the 
fiberwise entropy for any particular 
fiber: we don't know how much of $\mu_{\mc{F}_1,\mc{F}_2}$ is comprised of ``good" $f$. 
Now we can integrate 
this over all images $\mc{F}_2$ of $\mc{F}_1$. 
\[
\int \Phi_{\mu_{\mc{F}_2}}(\nu)\,d(\mu*\delta_{\mc{F}_1})(\mc{F}_2)\ge 
\beta\int_{M} \int \mathbf 1_G(f)\,d\mu_{\mc{F}_2}(f)\,
d(\mu*\delta_{\mc{F}_1})(\mc F_2)= \beta\mu(G).
\]
As long as the minimizers of the Furstenberg entropy are actually absolutely continuous, 
the final conclusion of the Proposition follows.
\end{proof}

\subsection{Mean miniflag volume distortion implies singular value gaps}
We are interested in obtaining the ratio of a pair of consecutive singular 
values of a product of matrices. 
The following proposition relates the logarithm of the ratio of the $k$th and 
$(k+1)$st singular values of a matrix
to an average value of the logarithm of the 1st and 2nd singular values 
for a corresponding fibered action.
For a matrix $A$, we write $\Svals_i^j(A)$ for the product 
$\sval_i(A)\sval_{i+1}(A)\cdots \sval_j(A)$ where
these are the $i$th through $j$th singular values, listed in non-increasing order.

Given a nested pair of subspaces $U\subseteq V$, we denote the orthogonal
complement of $U$ in $V$ by $V\ominus U$. Given a matrix $B$, 
we define the action of $B$ on the orthogonal complement of $U$ in $V$, denoted 
$B|_{V\perp U}$, by $\proj_{B(V)\ominus B(U)}B\proj_{V\ominus U}$, where $\proj_V$ 
denotes the orthogonal projection onto $V$.
For a pair of matrices $B_1$, $B_2$, one can check that
$(B_2B_1)|_{V\perp U}=(B_2)_{V_1\perp U_1}\cdot (B_1)_{V\perp U}$, where $U_1=B_1(U)$ and $V_1=B_1(V)$.
If $U$ is a $k$-dimensional subspace of an $\ell$-dimensional subspace $V$, we have the equality. 
\begin{equation}\label{eqn:singular_values_subspaces}
\Svals_1^{\ell}(B|_V)\coloneqq \Svals_1^\ell(B\proj_V)=\Svals_1^k(B|_U)\Svals_1^{\ell-k}(B|_{V\perp U}).
\end{equation}
This can be checked by writing the transformation $B$ with respect to an orthonormal basis 
with $U\oplus U^{\perp}$ on the domain and $B(U)\oplus B(U)^{\perp}$ on the codomain. The 
resulting matrix is block triangular and the singular values of the diagonal 
blocks are exactly those in the formula.

Given a partial flag
$\PF(i_1,i_2,\ldots,i_j)$, let $\lambda_{i_1,\ldots,i_j}$ be the normalized 
Haar measure, invariant under the
action of the orthogonal group, and recall that if $(i_{m_1},\ldots,i_{m_k})$ is a subsequence
of $(i_1,\ldots,i_j)$, then $\lambda_{i_{m_1},\ldots,i_{m_k}}$ is the 
marginal of $\lambda_{i_1,\ldots,i_j}$
under projection onto those coordinates. 

\begin{prop}\label{prop:gapest}
    For each $d\in\N$, there exists a constant $C_d$, such that for all $B\in\GL(d,\R)$,
    and all $1\le k<d$,
$$
\left|\int \log \frac {\sval_1(B|_{V_{k+1}\perp V_{k-1}})}{\sval_2(B|_{V_{k+1}\perp V_{k-1}})}\,
d\lambda_{k-1,k+1}(V_{k-1},V_{k+1})
-\log\frac {\sval_k(B)}{\sval_{k+1}(B)}\right|\le C_d.
$$
\end{prop}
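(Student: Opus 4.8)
The plan is to rewrite $\log\tfrac{\sval_k(B)}{\sval_{k+1}(B)}$ in terms of products of singular values and reduce the whole estimate to two elementary ``Plücker overlap'' integrals over Grassmannians. First I would reduce to the case $B=\diag(\sval_1,\dots,\sval_d)$ with $\sval_1\ge\dots\ge\sval_d>0$ (abbreviating $\sval_i=\sval_i(B)$): each $\lambda_{i_1,\dots,i_j}$ is $O(d)$-invariant, and the singular values of $B|_{V_{k+1}\perp V_{k-1}}$ are unchanged if $B$ is replaced by $O_1BO_2$ — for the left factor because postcomposing with the isometry $O_1\colon BV_{k+1}\ominus BV_{k-1}\to O_1BV_{k+1}\ominus O_1BV_{k-1}$ preserves singular values, for the right factor by substituting $V_i\mapsto O_2V_i$ and using invariance of $\lambda_{k-1,k+1}$ — so neither side of the proposition changes. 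Write $e_i$ for the standard basis; note $B|_{V_{k+1}\perp V_{k-1}}$ is invertible for every flag since $B$ is. For any invertible map $C$ between two planes, $\log\tfrac{\sval_1(C)}{\sval_2(C)}=2\log\sval_1(C)-\log\bigl(\sval_1(C)\sval_2(C)\bigr)$, and \eqref{eqn:singular_values_subspaces} applied to $V_{k-1}\subset V_{k+1}$ gives $\sval_1(B|_{V_{k+1}\perp V_{k-1}})\sval_2(B|_{V_{k+1}\perp V_{k-1}})=\Svals_1^{k+1}(B|_{V_{k+1}})/\Svals_1^{k-1}(B|_{V_{k-1}})$. Since the marginals of $\lambda_{k-1,k+1}$ on its two coordinates are $\lambda_{k-1}$ and $\lambda_{k+1}$, integrating this identity reduces the proposition to
\[
\Bigl|\int\log\Svals_1^j(B|_W)\,d\lambda_j(W)-\log\Svals_1^j(B)\Bigr|\le c_{j,d}\quad(1\le j\le d),
\]
\[
\Bigl|\int\log\sval_1(B|_{V_{k+1}\perp V_{k-1}})\,d\lambda_{k-1,k+1}-\log\sval_k(B)\Bigr|\le c_d,
\]
because applying the first with $j=k\pm1$ turns the two $\Svals$-integrals into $-\log(\sval_k\sval_{k+1})+O_d(1)$, and $2\log\sval_k-\log(\sval_k\sval_{k+1})=\log(\sval_k/\sval_{k+1})$.

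For the first estimate, write $\Svals_1^j(B|_W)=\|\Lambda^jB\cdot\hat\omega_W\|$ with $\hat\omega_W$ the unit Plücker vector of $W$. The upper bound is immediate from $\|\Lambda^jB\cdot\hat\omega_W\|\le\|\Lambda^jB\|_{\mathrm{op}}=\Svals_1^j(B)$; for the lower bound, pairing with the top singular vector $e_1\wedge\dots\wedge e_j$ of $\Lambda^jB$ gives $\Svals_1^j(B|_W)\ge\Svals_1^j(B)\,|\langle\hat\omega_W,e_1\wedge\dots\wedge e_j\rangle|$, so it is enough that a Plücker coordinate be log-integrable over $\Gr(j,d)$ with a bound depending only on $j,d$. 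This is standard: in the graph chart $W=\{(x,Xx):x\in\R^j\}$ one has $|\langle\hat\omega_W,e_1\wedge\dots\wedge e_j\rangle|=\det(I_j+X^\top X)^{-1/2}$ while $d\lambda_j\propto\det(I_j+X^\top X)^{-d/2}\,dX$, and the resulting integral of $\log\det(I_j+X^\top X)$ converges because $j\ge1$. Applying this with $j=k\pm1$ handles the two $\Svals$-integrals.

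The second estimate is the crux. Apply \eqref{eqn:singular_values_subspaces} to each $k$-plane $W$ with $V_{k-1}\subset W\subset V_{k+1}$ to get $\Svals_1^k(B|_W)=\Svals_1^{k-1}(B|_{V_{k-1}})\,\sval_1(B|_{W\perp V_{k-1}})$; since the lines $W\ominus V_{k-1}$ sweep out the $2$-plane $P:=V_{k+1}\ominus V_{k-1}$ and $\sval_1(B|_{W\perp V_{k-1}})=\|\proj_{(BV_{k-1})^\perp}Bu\|$ for $u$ a unit vector of $W\ominus V_{k-1}$, taking the maximum over $W$ gives $\max_W\Svals_1^k(B|_W)=\Svals_1^{k-1}(B|_{V_{k-1}})\,\sval_1(B|_{V_{k+1}\perp V_{k-1}})$, i.e. $\log\sval_1(B|_{V_{k+1}\perp V_{k-1}})=\log\bigl(\max_W\Svals_1^k(B|_W)\bigr)-\log\Svals_1^{k-1}(B|_{V_{k-1}})$; the subtracted integral is controlled by the first estimate with $j=k-1$. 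For the max term, $\max_W\Svals_1^k(B|_W)\le\Svals_1^k(B)$ by the compression bound, and writing $\hat\omega_W=\omega_{V_{k-1}}\wedge u$ with $u$ a unit vector of $P$ and pairing with $e_1\wedge\dots\wedge e_k$,
\[
\max_W\Svals_1^k(B|_W)\ \ge\ \Svals_1^k(B)\max_{u\in P,\,\|u\|=1}\bigl|\langle\omega_{V_{k-1}}\wedge u,\ e_1\wedge\dots\wedge e_k\rangle\bigr|\ =\ \Svals_1^k(B)\,\|\proj_P\xi\|,
\]
where $\xi:=\iota_{\omega_{V_{k-1}}}(e_1\wedge\dots\wedge e_k)$; note $\xi\perp V_{k-1}$ since $\langle\xi,v\rangle=\langle e_1\wedge\dots\wedge e_k,\,\omega_{V_{k-1}}\wedge v\rangle=0$ for $v\in V_{k-1}$. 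Conditioning on $V_{k-1}$ — so that $P$ is Haar among $2$-planes of $V_{k-1}^\perp$ — the integral $\int\log\|\proj_P\xi\|$ splits as $\int\log\|\xi\|\,d\lambda_{k-1}$ plus a universal constant (log of the norm of the projection of a fixed unit vector onto a Haar $2$-plane in $\R^{d-k+1}$), and $\|\xi\|^2=\|\proj_{\Lambda^{k-1}\operatorname{span}(e_1,\dots,e_k)}\omega_{V_{k-1}}\|^2$ is a product of cosines of principal angles between $V_{k-1}$ and $\operatorname{span}(e_1,\dots,e_k)$, hence almost surely positive and log-integrable with a bound depending only on $d$. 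Therefore $\int\log\max_W\Svals_1^k(B|_W)\,d\lambda_{k-1,k+1}=\log\Svals_1^k(B)+O_d(1)$, and the second estimate follows since $\log\Svals_1^k(B)-\log\Svals_1^{k-1}(B)=\log\sval_k(B)$.

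Combining the two estimates and taking $C_d$ to be the worst of the constants over $k\in\{1,\dots,d-1\}$ finishes the proof. I expect the one genuinely delicate point to be the lower bound in the second estimate, namely verifying that the overlap $\|\proj_P\xi\|$ \emph{achievable within a single miniflag} is almost surely positive with $\log$-integrable norm and a $d$-dependent bound; the remaining ingredients are the trivial compression inequality, the elementary identity $\log\tfrac{\sval_1}{\sval_2}=2\log\sval_1-\log(\sval_1\sval_2)$ together with \eqref{eqn:singular_values_subspaces}, and the classical log-integrability of Plücker coordinates over a Grassmannian.
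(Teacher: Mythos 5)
Your proposal is correct, and its skeleton matches the paper's: reduce to a diagonal matrix by $O(d)$-invariance, bound each restricted singular value above by the compression inequality and below by a Pl\"ucker-type overlap with the standard coordinate subspace, and finish with log-integrability of that overlap over the Grassmannian (your graph-chart computation of $\int|\log\det(I+X^\top X)^{-1/2}|\,d\lambda_j$ proves the same fact as the paper's Lemma \ref{lem:integrable}, which uses Gaussian sampling and Gram--Schmidt instead). The one step you handle genuinely differently is the lower bound on $\sval_1(B|_{V_{k+1}\perp V_{k-1}})$ -- the point you correctly identify as the crux. The paper sidesteps the maximum entirely: it picks an \emph{arbitrary} $V_k$ in the miniflag, gets the pointwise bound $\sval_1(D|_{V_{k+1}\perp V_{k-1}})\ge\alpha_k(V_k)b_k$ for every such $V_k$, and then integrates over the full flag space $\PF(k-1,k,k+1)$, using that $\lambda_k$ is a marginal of $\lambda_{k-1,k,k+1}$ to convert the auxiliary variable into an integral of $\log\alpha_k$ over $\Gr(k,d)$. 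You instead compute $\max_W \Svals_1^k(B|_W)$ over the miniflag exactly, identify the best achievable overlap as $\|\proj_P\xi\|$ with $\xi=\iota_{\omega_{V_{k-1}}}(e_1\wedge\cdots\wedge e_k)$, and then need two extra integrability facts: that $\|\xi\|$ (a product of cosines of principal angles, bounded below by the paper's $\alpha_{k-1}$) is log-integrable in $V_{k-1}$, and that the projection of a fixed unit vector of $V_{k-1}^\perp$ onto the conditionally Haar-distributed $2$-plane $P$ has log-integrable norm. Both facts are true and your verification of them is sound (including the orthogonality $\xi\perp V_{k-1}$ and the conditional Haar structure of $P$ given $V_{k-1}$), so the argument closes; the paper's marginal trick buys a shorter route to the same place by never needing the interior-product identity or the conditional two-plane estimate, while your version is more self-contained at that step and makes the extremal $V_k$ explicit.
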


For $V_\ell\in\grass_{\ell}$, let $\alpha_\ell(V_\ell)=S_1^{\ell}(\proj_{E_\ell}\proj_{V_\ell})$, 
where $E_\ell=\lin(e_1,\ldots,e_\ell)$.
This is a measure of the ``component'' of $V_\ell$ in the $E_\ell$ direction.

\begin{lem}\label{lem:gapcomp}
    Let $D=\diag(b_1,\ldots,b_d)$ and let $1\le k<d$ and let $(V_{k-1},V_k,V_{k+1})\in \PF(k-1,k,k+1)$. 
    Writing $\alpha_i$ for $\alpha_i(V_i)$, We have
$$\alpha_k^2\alpha_{k-1}\cdot\frac {b_k}{b_{k+1}}
\le \frac{\sval_1(D|_{V_{k+1}\perp V_{k-1}})}{\sval_2(D|_{V_{k+1}\perp V_{k-1}})}
\le \frac{1}{\alpha_{k-1}^2\alpha_{k+1}}\cdot \frac {b_k}{b_{k+1}}
$$
\end{lem}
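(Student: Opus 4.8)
The strategy is to reduce the two-dimensional quantity $\sval_1(D|_{V_{k+1}\perp V_{k-1}})/\sval_2(D|_{V_{k+1}\perp V_{k-1}})$ to products of singular values of $D$ restricted to the various flag subspaces via the multiplicativity relation \eqref{eqn:singular_values_subspaces}, and then control each such factor by the $\alpha_i$'s. Concretely, applying \eqref{eqn:singular_values_subspaces} to the pair $V_{k-1}\subset V_{k+1}$ gives $\Svals_1^{k+1}(D|_{V_{k+1}})=\Svals_1^{k-1}(D|_{V_{k-1}})\cdot\Svals_1^{2}(D|_{V_{k+1}\perp V_{k-1}})$. Since $\Svals_1^{2}(D|_{V_{k+1}\perp V_{k-1}})=\sval_1\sval_2$ of that $2\times2$ map, and we also know its larger singular value $\sval_1(D|_{V_{k+1}\perp V_{k-1}})\le \|D|_{V_{k+1}\perp V_{k-1}}\|\le b_1$ (and is bounded below appropriately), the ratio $\sval_1/\sval_2=(\sval_1)^2/(\sval_1\sval_2)$ is controlled once I control $\sval_1(D|_{V_{k+1}\perp V_{k-1}})$ and the product $\Svals_1^2$. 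So the real content is: (i) estimate $\Svals_1^{j}(D|_{V_j})$ for $j=k-1,k,k+1$ in terms of $\alpha_j$, and (ii) estimate $\sval_1(D|_{V_{k+1}\perp V_{k-1}})$.

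For step (i): the key sub-claim is that $\alpha_j\cdot b_1b_2\cdots b_j \le \Svals_1^{j}(D|_{V_j})\le b_1\cdots b_j$, and moreover a matching lower bound $\Svals_1^{j}(D|_{V_j})\ge \alpha_j\, b_1\cdots b_j$ — here $\alpha_j(V_j)=\Svals_1^{j}(\proj_{E_j}\proj_{V_j})$ measures how close $V_j$ is to the coordinate subspace $E_j$ on which $D$ acts by the top $j$ singular values. The upper bound is immediate since $D$ restricted to any $j$-plane has operator-norm-type bounds; for the lower bound one writes $\Svals_1^j(D|_{V_j}) = \Svals_1^j(D\proj_{V_j})$ and uses that $D$ expands the $E_j$-direction most, so projecting first onto $E_j$ only loses the factor $\alpha_j$: $\Svals_1^j(D\proj_{V_j})\ge \Svals_1^j(D\proj_{E_j}\proj_{V_j})\ge b_1\cdots b_j\cdot \Svals_1^j(\proj_{E_j}\proj_{V_j})=b_1\cdots b_j\,\alpha_j$. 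Combining the estimates for $j=k-1$ and $j=k+1$ in the identity from the previous paragraph yields two-sided bounds on $\Svals_1^2(D|_{V_{k+1}\perp V_{k-1}})=\sval_1\sval_2$ of the shape $\alpha_{k+1}\,b_kb_{k+1}\lesssim \sval_1\sval_2\lesssim \alpha_{k-1}^{-1}\,b_kb_{k+1}$.

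For step (ii), I need $\sval_1(D|_{V_{k+1}\perp V_{k-1}})$ — the top singular value of the $2$-dimensional map. Here the role of $\alpha_k$ enters: since $V_{k-1}\subset V_k\subset V_{k+1}$, the line $V_k\ominus V_{k-1}$ lies inside $V_{k+1}\ominus V_{k-1}$, and $D$ on this line has a norm comparable (up to $\alpha_k$ and $\alpha_{k-1}$ factors) to $b_k$ — more precisely $\sval_1(D|_{V_{k+1}\perp V_{k-1}})\ge \|D|_{V_k\perp V_{k-1}}\| = \Svals_1^k(D|_{V_k})/\Svals_1^{k-1}(D|_{V_{k-1}})\ge \alpha_k\,b_k\cdot(b_1\cdots b_{k-1})/(b_1\cdots b_{k-1}) = \alpha_k\, b_k$, using step (i) twice; for the upper bound $\sval_1(D|_{V_{k+1}\perp V_{k-1}})\le \sval_1(D|_{V_{k+1}})\le b_1$, but a sharper bound $\le \alpha_{k-1}^{-1}b_k$ comes again from the multiplicativity identity by isolating the top singular value. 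Then $\sval_1/\sval_2=\sval_1^2/(\sval_1\sval_2)$, and feeding in the bounds from (i) and (ii) gives the claimed $\alpha_k^2\alpha_{k-1}\cdot b_k/b_{k+1}\le \sval_1/\sval_2\le \alpha_{k-1}^{-2}\alpha_{k+1}^{-1}\cdot b_k/b_{k+1}$ after cancellation.

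**Main obstacle.** The bookkeeping is the bulk of the work, but the genuinely delicate point is step (ii): bounding the \emph{individual} top singular value $\sval_1(D|_{V_{k+1}\perp V_{k-1}})$, as opposed to products $\Svals_1^j$, because the orthogonal-complement construction $B|_{V\perp U}=\proj_{B(V)\ominus B(U)}\,B\,\proj_{V\ominus U}$ involves post-composition with a projection that can in principle shrink vectors, so one has to argue that on the relevant line (the image of $V_k\ominus V_{k-1}$) this projection does not lose more than an $\alpha$-controlled factor. I expect this to require carefully tracking which vectors realize the norm and using that $DV_{k-1}\subseteq DV_k\subseteq DV_{k+1}$ so the nesting is preserved under $D$, making $(D|_{V_{k+1}\perp V_{k-1}})$ genuinely block-triangular with respect to $(DV_k\ominus DV_{k-1})\oplus(DV_{k+1}\ominus DV_k)$; the off-diagonal block then only helps the upper bound on $\sval_1$ and is harmless for the lower bound.
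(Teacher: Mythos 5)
Your proposal is correct and follows essentially the same route as the paper: the two-sided bound $\alpha_j b_1\cdots b_j\le \Svals_1^j(D|_{V_j})\le b_1\cdots b_j$, the multiplicativity identity \eqref{eqn:singular_values_subspaces} to control $\Svals_1^2(D|_{V_{k+1}\perp V_{k-1}})$ and (via the intermediate $V_k$) the individual $\sval_1$, and finally the identity $\sval_1/\sval_2=\sval_1^2/(\sval_1\sval_2)$. The ``main obstacle'' you flag is resolved exactly as you anticipate: for nested subspaces the two orthogonal projections agree on the relevant line, so the comparison $\sval_1(D|_{V_{k+1}\perp V_{k-1}})\ge\sval_1(D|_{V_k\perp V_{k-1}})$ goes through.
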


\begin{proof}
For each $\ell$, we have the following inequalities:
\begin{equation}\label{eq:SVbounds}
    \alpha_\ell b_1\cdots b_\ell\le \Svals_1^\ell(D|_{V_\ell})\le b_1\cdots b_\ell,\\
\end{equation}
where the upper bound is by standard properties of singular values and the left hand side 
follows from 
\begin{align*}
    \Svals_1^\ell(D|_{V_\ell})&=\Svals_1^\ell(D\proj_{V_\ell})\\
    &\ge \Svals_1^\ell(\proj_{E_\ell}D\proj_{V_\ell})\\
    &=\Svals_1^\ell(\proj_{E_\ell}D\proj_{E_\ell}\proj_{E_\ell}\proj_{V_\ell})\\
    &=\Svals_1^\ell(\proj_{E_\ell}D\proj_{E_\ell})\Svals_1^\ell(\proj_{E_\ell}\proj_{V_\ell})\\
   &= \alpha_\ell\cdot b_1\cdots b_\ell.
\end{align*}
The second line follows by standard properties of singular values. The third line follows since
$\proj_{E_\ell}D=\proj_{E_\ell}D\proj_{E_\ell}^2$ and the fourth line follows by 
multiplicativity of determinants
since $\proj_{E_\ell}D\proj_{E_\ell}$ and $\proj_{E_\ell}\proj_{V_\ell}$ are rank $\ell$ matrices
where the domain of the first matches the range of the second. 
From earlier, if $W$ is any $k$-dimensional subspace containing $V_{k-1}$,
\begin{align}
    \Svals_1^{k}(D|_{W})&=\Svals_1^{k-1}(D|_{V_{k-1}})\sval_1(D|_{W\perp V_{k-1}});
    \text{\quad and}\label{eq:s1bound}
    \\
    \Svals_1^{k+1}(D|_{V_{k+1}})&=\Svals_1^{k-1}(D|_{V_{k-1}})
    \Svals_1^2(D|_{V_{k+1}\perp V_{k-1}}),\label{eq:s12bound}.
\end{align}

We then have
$$
\sval_1(D|_{W\perp V_{k-1}})=\frac{\Svals_1^k(D|_{W})}{\Svals_1^{k-1}(D|_{V_{k-1}})}
\le \frac{\Svals_1^k(D)}{\alpha_{k-1} b_1\cdots b_{k-1}}
=\frac{b_k}{\alpha_{k-1}}.
$$

Since $W$ was arbitrary, $\sval_1(D|_{\R^d\perp V_{k-1}})\le \frac{b_k}{\alpha_{k-1}}$, so that in 
particular
we see 
$$
\sval_1(D|_{V_{k+1}\perp V_{k-1}})\le \frac {b_k}{\alpha_{k-1}}.
$$
Now, taking $W$ to be $V_k$ in \eqref{eq:s1bound}, we have
$$
\sval_1(D|_{V_{k+1}\perp V_{k-1}})
\ge \sval_1(D|_{V_k\perp V_{k-1}})=\frac {\Svals_1^k(D|_{V_{k}})}{\Svals_1^{k-1}(D|_{V_{k-1}})}
\ge \frac{\alpha_k b_1\cdots b_k}{b_1\cdots b_{k-1}}
=\alpha_kb_k.
$$
Combining the bounds, we have
\begin{equation}\label{eq:s1bounds}
\alpha_k b_k\le \sval_1(D|_{V_{k+1}\perp V_{k-1}})\le \frac{b_k}{\alpha_{k-1}}.
\end{equation}

Since $\Svals_1^2(D|_{V_{k+1}\perp V_{k-1}})=\Svals_1^{k+1}(D|_{V_{k+1}})/\Svals_1^{k-1}(D|_{V_{k-1}})$,
applying the bounds in \eqref{eq:SVbounds} gives
\begin{equation}\label{eq:s12bounds}
    \alpha_{k+1}b_kb_{k+1}\le \Svals_1^2(B_{V_{k+1}\perp V_{k-1}})\le \frac{b_kb_{k+1}}{\alpha_{k-1}}.
\end{equation}

Dividing the square of \eqref{eq:s1bounds} by \eqref{eq:s12bounds} gives
the claimed result.
\end{proof}

\begin{lem}\label{lem:integrable}
    Let $\alpha_k\colon \grass_k\to\R$ be defined by 
    $\alpha_k(V_k)=\Svals_1^k(\proj_{E_k}\proj_{V_k})$ as above. 
    For all $1\le k<d$, $\log\alpha_k$ is an integrable function of 
    $V_k$ with respect to $\lambda_k$. 
\end{lem}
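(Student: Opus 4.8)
The plan is to realize $\lambda_k$ probabilistically and thereby reduce the integrability of $\log\alpha_k$ to that of the logarithm of a chi-squared variable. Let $g_1,\dots,g_k$ be i.i.d.\ standard Gaussian random vectors in $\R^d$ and set $V_k=\lin(g_1,\dots,g_k)$, which is $k$-dimensional almost surely since $k<d$. Because the joint law of $(g_1,\dots,g_k)$ is invariant under the diagonal action of $\SO(d)$, the law of $V_k$ on $\grass_k$ is $\SO(d)$-invariant, hence equals $\lambda_k$ (the unique $\SO(d)$-invariant probability measure on $\grass_k$). Thus it suffices to show $\E\,\abs{\log\alpha_k(V_k)}<\infty$.

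Next I would compute $\alpha_k(V_k)$ explicitly in terms of the frame $g_1,\dots,g_k$. Let $G$ be the $d\times k$ matrix with columns $g_1,\dots,g_k$, and let $G_1$ be the $k\times k$ matrix formed by the first $k$ rows of $G$, i.e.\ the matrix of $E_k$-components of the $g_i$. The columns of $G(G^\top G)^{-1/2}$ form an orthonormal basis of $V_k$, so the restriction of $\proj_{E_k}$ to $V_k$ is represented in this basis by $G_1(G^\top G)^{-1/2}$; taking the product of its singular values,
\[
\alpha_k(V_k)=\Svals_1^k\!\left(\proj_{E_k}\proj_{V_k}\right)=\abs{\det\!\left(G_1(G^\top G)^{-1/2}\right)}=\frac{\abs{\det G_1}}{\sqrt{\det(G^\top G)}}.
\]
Hence $\log\alpha_k(V_k)=\tfrac12\log\det(G_1^\top G_1)-\tfrac12\log\det(G^\top G)$, and it is enough to show that $\log\det(G_1^\top G_1)$ and $\log\det(G^\top G)$ are each integrable.

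For this I would invoke the chi-squared (Bartlett/Gram--Schmidt) decomposition of the Gram determinant of a Gaussian matrix: if $H$ is an $m\times n$ standard Gaussian matrix with $n\le m$, then $\det(H^\top H)$ has the distribution of $\prod_{j=1}^{n}\chi^2_{m-j+1}$ with independent factors, because the $j$-th factor is the squared distance of the $j$-th column of $H$ from the span of the preceding ones. Applying this to $G_1$ (with $m=n=k$) and to $G$ (with $m=d$, $n=k$),
\[
\det(G_1^\top G_1)\ \overset{d}{=}\ \prod_{j=1}^{k}\chi^2_{j},\qquad\qquad \det(G^\top G)\ \overset{d}{=}\ \prod_{j=1}^{k}\chi^2_{d-j+1}.
\]
Every index appearing here is at least $1$ (indeed $d-k+1\ge 2$ since $k<d$), and a $\chi^2_m$ variable with $m\ge 1$ has a density that behaves like a constant times $t^{m/2-1}$ as $t\to0^+$ and decays exponentially as $t\to\infty$, so $\E\,\abs{\log\chi^2_m}<\infty$ for every such $m$. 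Therefore $\log\det(G_1^\top G_1)$ and $\log\det(G^\top G)$ lie in $L^1$, and so does their difference $2\log\alpha_k(V_k)$; this proves the lemma.

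I expect the only genuinely delicate point to be conceptual rather than computational: $\alpha_k$ really does vanish — on the set of $V_k$ meeting $E_k^\perp$ nontrivially, a proper algebraic subvariety of $\grass_k$ — so $\log\alpha_k$ equals $-\infty$ on a $\lambda_k$-null subset, and the substance of the lemma is that this singularity is only logarithmically bad. The virtue of the Gaussian model is that this is handled automatically and transparently: the one potential source of non-integrability would be a factor $\log\chi^2_m$ with $m=0$, which never occurs here because the smallest index appearing in either product is $1$. (Alternatively, one could work in the affine chart $V_k=\{(v,Xv):v\in E_k\}$, where $\alpha_k=\det(I+X^\top X)^{-1/2}$ and $\lambda_k$ has density proportional to $\det(I+X^\top X)^{-d/2}$, and bound $\log\det(I+X^\top X)\le C_\varepsilon\det(I+X^\top X)^{\varepsilon}$; but the Gaussian argument is cleaner.)
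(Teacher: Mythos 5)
Your argument is correct, and although it shares the paper's starting point---realizing $\lambda_k$ as the law of $\lin(g_1,\dots,g_k)$ for i.i.d.\ standard Gaussian vectors---the way you establish integrability is genuinely different. The paper writes $\alpha_k(V_k)$ as the product $\prod_{j=1}^k d\big(\proj_{E_k}\hat{\mathbf u}_j,\proj_{E_k}\lin(\mathbf v_1,\dots,\mathbf v_{j-1})\big)$ over the Gram--Schmidt frame and then estimates the tail $\lambda_k(\{\alpha_k\le e^{-t}\})$ by a union bound over the $k$ factors, an exchangeability reduction to the last factor, and a Gaussian anti-concentration bound (the chance that a Gaussian vector lies within $\delta$ of a hyperplane is $O(\delta)$ up to norm-tail corrections), finally integrating the tail via the layer-cake formula. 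You instead identify $\alpha_k(V_k)$ exactly as the ratio $\abs{\det G_1}/\sqrt{\det(G^\top G)}$ of Gram determinants---which is the same determinant the paper's product formula computes---and then invoke the Bartlett decomposition to obtain exact laws $\prod_{j=1}^k\chi^2_j$ and $\prod_{j=1}^k\chi^2_{d-j+1}$ for the two pieces, reducing everything to $\E\abs{\log\chi^2_m}<\infty$ for $m\ge1$. (The lack of independence between $G_1$ and $G$ is immaterial, as you note, since $L^1$ is closed under differences.) Your route yields an exact distributional identity for $\alpha_k^2$ and avoids all tail estimates, making it shorter and arguably sharper; the paper's route is more hands-on but uses only anti-concentration, so it would tolerate cruder inputs. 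For the lemma as stated, your proof is complete.
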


\begin{proof}
    Since $\alpha_k(V_k)\le 1$ for all $V_k\in\grass_k$ 
    (so that $\log\alpha_k$ takes non-positive values),
    we have the equality
\begin{align*}
    \int|\log\alpha_k(V_k)|\,d\lambda_k(V_k)&=
    \int_0^\infty \lambda_k(\{V_k\colon |\log\alpha_k(V_k)|\ge t\}\,dt\\
    &=\int_0^\infty \lambda_k(\{V_k\colon \alpha_k(V_k)\le e^{-t}\})\,d\!t.
\end{align*}
Recall that we may obtain a uniform random element of $V_k$ by sampling 
vectors $\mathbf v_1,\ldots,\mathbf v_k$ independently with independent standard normal entries
and setting $V_k=\lin(\mathbf v_1,\ldots,\mathbf v_k)$. Let 
$\mathbf u_i=\proj_{\lin(\mathbf v_1,\ldots,\mathbf v_{i-1})^\perp}(\mathbf v_i)$ and 
set $\hat{\mathbf u}_i=\mathbf u_i/\|\mathbf u_i\|$.
That is, $\hat{\mathbf u}_1,\ldots,\hat{\mathbf u}_k$ are the Gram-Schmidt orthonormalization
of $\mathbf v_1,\ldots,\mathbf v_k$. 
We then have 
$$
\alpha_k(V_k)=\prod_{j=1}^k
d\big(\proj_{E_k}\hat{\mathbf u}_j,\proj_{E_k}\lin(\mathbf v_1,\ldots,\mathbf v_{j-1})\big).
$$
In particular, if $\alpha_k(V_k)\le e^{-t}$, then for some $j\le k$, 
$d\big(\proj_{E_k}\hat{\mathbf u}_j,\proj_{E_k}\lin(\mathbf v_1,\ldots,
\mathbf v_{j-1})\big)\le e^{-t/k}$. 
That is
\begin{align*}
\lambda_k(\{V_k\colon \alpha_k(V_k)\le e^{-t}\})&\le
\sum_{j=1}^k \mathbb P\Big(d\big(\proj_{E_k}\hat{\mathbf u}_j,
\proj_{E_k}\lin(\mathbf v_1,\ldots,\mathbf v_{j-1})\big)\le e^{-t/k}\Big)\\
&\le k\,\mathbb P\Big(d\big(\proj_{E_k}\hat{\mathbf u}_k,
\proj_{E_k}\lin(\mathbf v_1,\ldots,\mathbf v_{k-1})\big)\le e^{-t/k}\Big)\\
&=k\,\mathbb P\Big(d\big(\proj_{E_k}\mathbf u_k,
\proj_{E_k}\lin(\mathbf v_1,\ldots,\mathbf v_{k-1})\big)\le e^{-t/k}\|\mathbf u_k\|\Big)\\
&=k\,\mathbb P\Big(d\big(\proj_{E_k}\mathbf v_k,
\proj_{E_k}\lin(\mathbf v_1,\ldots,\mathbf v_{k-1})\big)\le e^{-t/k}\|\mathbf u_k\|\Big)\\
&\le k\,\mathbb P\Big(d\big(\proj_{E_k}\mathbf v_k,
\proj_{E_k}\lin(\mathbf v_1,\ldots,\mathbf v_{k-1})\big)\le e^{-t/k}\|\mathbf v_k\|\Big)\\
\end{align*}

We observe
\begin{align*}
&\mathbb P\Big(d\big(\proj_{E_k}\mathbf v_k,
\proj_{E_k}\lin(\mathbf v_1,\ldots,\mathbf v_{k-1})\big)\le e^{-t/k}\|\mathbf v_k\|\Big)\\
&\le
\mathbb P\Big(\|\mathbf v_k\|\ge t\Big)+\mathbb P\Big(d\big(\proj_{E_k}\mathbf v_k,
\proj_{E_k}\lin(\mathbf v_1,\ldots,\mathbf v_{k-1})\big)\le te^{-t/k}\Big).
\end{align*}

The first term is $O(e^{-t^2/(2d)})$: at least one of the entries has to have 
absolute value exceeding $t/\sqrt d$,
so this is an integrable function of $t$. To estimate the second term, 
let $W$ be an arbitrary $(k-1)$-dimensional 
subspace of $\R^k$ and let $\mathbf V$ be a random vector in $\R^k$ with 
independent standard normal entries. 
We want to estimate $\mathbb P(d(\mathbf V,W)\le te^{-t/k})$.
We have
\begin{align*}
\mathbb P(d(\mathbf V,W)\le te^{-t/k})
&\le
\mathbb P(\|\mathbf V\|\ge t)+\operatorname{Vol}(B_{te^{-t/k}}(W)\cap B_t(0))\\
&=O(e^{-t^2/(2k^2)})+O(t^ke^{-t/k}).
\end{align*}
Since this holds for arbitrary $W$, 
we deduce
$$
\mathbb P\Big(d\big(\proj_{E_k}\mathbf v_k,
\proj_{E_k}\lin(\mathbf v_1,\ldots,\mathbf v_{k-1})\big)\le te^{-t/k}\Big)=O(t^ke^{-t/k}).
$$
Hence
$$
\int_0^\infty \lambda_k(\{V_k\colon \alpha_k(V_k)\le e^{-t}\})\,d\!t<\infty,
$$
as required.
\end{proof}

\begin{proof}[Proof of Proposition \ref{prop:gapest}]
By the singular value decomposition, there exist orthogonal matrices matrices $O_1$ and $O_2$ 
and a diagonal matrix $D$ such that $B=O_1DO_2$. 
If $O$ is an orthogonal matrix and $V$ is a subspace of $\R^d$, we first observe
$O\proj_V=\proj_{O(V)}O$.
If $A$ is any matrix and $U$ is a proper subspace of a subspace $V$,
then 
\begin{align*}
\sval_i((AO)_{V\perp U})&=\sval_i(\proj_{AO(V)\ominus AO(U)}AO\proj_{V\ominus U})\\
&=\sval_i(\proj_{AO(V)\ominus AO(U)} A\proj_{O(V)\ominus O(U)}O)\\
&=\sval_i(\proj_{AO(V)\ominus AO(U)} A\proj_{O(V)\ominus O(U)})\\
&=\sval_i(A|_{O(V)\perp O(U)}).
\end{align*}
Similarly, 
\begin{align*}
\sval_i((OA)_{V\perp U})&=\sval_i(\proj_{OA(V)\ominus OA(U)} OA\proj_{V\ominus U})\\
&=\sval_i(O\proj_{A(V)\ominus A(U)}A\proj V)\\
&=\sval_i(OA_{V\perp U})\\
&=\sval_i(A_{V\perp U}).
\end{align*}
Hence we have $\sval_i(B|_{V_{k+1}\perp V_{k-1}})=\sval_i((DO_2)|_{V_{k+1}\perp V_{k-1}})=
\sval_i(D_{O_2(V_{k+1})\perp O_2(V_{k-1})})$. Additionally, $\sval_i(B)=\sval_i(D)$. 
Since $\lambda_{k-1,k+1}$ is invariant under the left action of the orthogonal group,
we see
\begin{equation}\label{eq:D=B}
\begin{split}
&\left|\int \log \frac {\sval_1(B|_{V_{k+1}\perp V_{k-1}})}{\sval_2(B|_{V_{k+1}\perp V_{k-1}})}\,
d\lambda_{k-1,k+1}(V_{k-1},V_{k+1})
-\log\frac {\sval_k(B)}{\sval_{k+1}(B)}\right|\\
=&
\left|\int \log \frac {\sval_1(D|_{V_{k+1}\perp V_{k-1}})}{\sval_2(D|_{V_{k+1}\perp V_{k-1}})}\,
d\lambda_{k-1,k+1}(V_{k-1},V_{k+1})
-\log\frac {\sval_k(D)}{\sval_{k+1}(D)}\right|.
\end{split}
\end{equation}

By Lemma \ref{lem:gapcomp},
we have for
$\lambda_{k-1,k,k+1}$-a.e. $(V_{k-1},V_k,V_{k+1})$, 
$$
2\log \alpha_k+\log \alpha_{k-1}\le
\log\frac{\sval_1(D|_{V_{k+1}\perp V_{k-1}})}{\sval_2(D|_{V_{k+1}\perp V_{k-1}})}-
\log\frac {\sval_k(D)}{\sval_{k+1}(D)}\le -2\log\alpha_{k-1}-\log\alpha_{k+1}.
$$
Applying \eqref{eq:D=B} and integrating the inequality with respect to $\lambda_{k-1,k,k+1}$,
we obtain
\begin{align*}
&\int\left|\log\frac{\sval_1(B|_{V_{k+1}\perp V_{k-1}})}{\sval_2(B|_{V_{k+1}\perp V_{k-1}})}-
\log\frac {\sval_k(B)}{\sval_{k+1}(B)}\right|\,d\lambda_{k-1,k,k+1}(V_{k-1},V_k,V_{k+1})\\
\le &\int\left(2|\log\alpha_{k-1}|+2|\log\alpha_k|+|\log\alpha_{k+1}|\right)\,
d\lambda_{k-1,k,k+1}(V_{k-1},V_k,V_{k+1}).
\end{align*}
Since $\lambda_{k-1}$, $\lambda_k$, $\lambda_{k+1}$ and $\lambda_{k-1,k+1}$ are marginals of 
$\lambda_{k-1,k,k+1}$, applying Lemma \ref{lem:integrable}, we obtain the desired conclusion, 
observing that the right side does not depend on $B$.
\end{proof}

\begin{rem}
The estimate in Proposition \ref{prop:gapest} is closely related to certain estimates in the theory 
of Dedieu-Shub measures \cite{dedieu2003shub,burns1999recent}. For example, 
\cite{rivin2005mean} shows that 
\[
\int_{O(n)} \rho(AO)\,d\text{Haar}(O) \ge C_d \|A\|,
\]
where $O_n$ is the orthogonal group and $\rho(AO)$ is the spectral radius of $AO$. 
See also \cite{armentano2024random}.
\end{rem}
\subsection{Mean miniflag fiber distortion implies mean singular value gaps} 

The following lemma provides the necessary connection between the entropy on miniflags
and the gaps between singular values. 

\begin{prop}\label{prop:miniflag_distortion_implies_singular_value_gaps}
Let $m$ be the Haar measure on $\PF(k-1,k+1)$. 
Let $\mu$ be a measure on $\GL(d,\R)$. Then:
\[
\E_{\mu}\left[\log \sval_{k}(A)-\log\sval_{k+1}(A)\right]\ge 
\int_{\PF(k-1,k+1)} \EPIA(\mc F,\mu)\,dm(\mc F)-C_d,
\]
where $C_d$ is the constant from Proposition \ref{prop:gapest}.
In other words, the mean fiber-averaged entropy gives a lower bound on the mean gap 
between singular values.
\end{prop}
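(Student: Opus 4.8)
The plan is to combine three ingredients already in hand: the entropy bound on volume distortion (Lemma \ref{lem:entropy_volume_distortion}), the identification of the one-dimensional volume distortion of the induced map on a miniflag with $\sval_1/\sval_2$ of the corresponding linear map (Lemma \ref{lem:volume_distortion_RP1} together with the Proposition immediately following it), and Proposition \ref{prop:gapest}, which converts an average of $\log(\sval_1/\sval_2)$ over miniflags into $\log(\sval_k/\sval_{k+1})$. Throughout I would identify $m$ with $\lambda_{k-1,k+1}$ and regard each $A\in\GL(d,\R)$ as a bundle automorphism of $\PF(k-1,k,k+1)\to\PF(k-1,k+1)$, so that $\EPIA(\mc F,\mu)$ is defined.

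First I would fix a miniflag $\mc F=\mc F(V_{k-1},V_{k+1})$ with its unit-mass Haar measure $m_{V_{k-1},V_{k+1}}$. For each image miniflag $\mc F'$, the conditional measure $\mu_{\mc F,\mc F'}$ pushes forward to a measure on diffeomorphisms between the unit-volume $1$-manifolds $\mc F$ and $\mc F'$, so Lemma \ref{lem:entropy_volume_distortion} yields $\EP(\mu_{\mc F,\mc F'})\le\int\log\mc N(f)\,d\mu_{\mc F,\mc F'}(f)$; moreover, when $f$ is induced by $A\in\GL(d,\R)$, one has $\mc N(f)=\sval_1(A|_{V_{k+1}\perp V_{k-1}})/\sval_2(A|_{V_{k+1}\perp V_{k-1}})$ by Lemma \ref{lem:volume_distortion_RP1} and the Proposition following it. Integrating over $\mc F'$ against $\bar\mu_{\mc F}$ and recognizing the iterated integral as integration against $\mu$ itself (this is the disintegration of $\mu$ along $A\mapsto A(\mc F)$), I obtain
\[
\EPIA(\mc F,\mu)=\int\EP(\mu_{\mc F,\mc F'})\,d\bar\mu_{\mc F}(\mc F')\le\int_{\GL(d,\R)}\log\frac{\sval_1(A|_{V_{k+1}\perp V_{k-1}})}{\sval_2(A|_{V_{k+1}\perp V_{k-1}})}\,d\mu(A).
\]

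Next I would integrate this over $\mc F$ against $m=\lambda_{k-1,k+1}$. Since $\sval_1\ge\sval_2$ the integrand is nonnegative, so Tonelli lets me interchange the two integrations, after which Proposition \ref{prop:gapest} bounds the inner $\lambda_{k-1,k+1}$-integral by $\log(\sval_k(A)/\sval_{k+1}(A))+C_d$; the constant integrates against the probability measure $\mu$ to $C_d$, and rearranging gives precisely the asserted inequality. (If the double integral of $\log(\sval_1/\sval_2)$ diverges, then by Proposition \ref{prop:gapest} so does $\E_{\mu}[\log\sval_k-\log\sval_{k+1}]$ and the statement is trivial, so finiteness may be assumed.) The one point requiring care is the fiberwise application of Lemma \ref{lem:entropy_volume_distortion}: one needs the conditional measures $\mu_{\mc F,\mc F'}$ to depend measurably on $(\mc F,\mc F')$ --- which is exactly what makes $\EP(\mu_{\mc F,\mc F'})$ a measurable integrand in \eqref{eqn:fiberwise_pushforward_entropy} --- and to be genuinely supported on diffeomorphisms of the $1$-dimensional fibers with their Haar volumes; both follow from the bundle structure set up in Subsection \ref{subsec:miniflags}. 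I expect this measurable-disintegration bookkeeping, rather than any analytic difficulty, to be the main obstacle; once it is in place the proposition is a mechanical assembly of the earlier inequalities together with two interchanges of integration justified by nonnegativity of $\log(\sval_1/\sval_2)$.
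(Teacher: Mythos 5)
Your proposal is correct and follows essentially the same route as the paper's proof: both chain Lemma \ref{lem:entropy_volume_distortion} (applied fiberwise to the conditional measures $\mu_{\mc F,\mc F'}$), the identification of the miniflag volume distortion with $\sval_1/\sval_2$ of the restricted map, a Fubini interchange, and Proposition \ref{prop:gapest}. The only difference is presentational --- you build the inequality from the entropy side upward while the paper starts from Proposition \ref{prop:gapest} and works down --- and your added remarks on measurability and the divergent case are reasonable but not substantive departures.
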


\begin{proof} 
Integrating Proposition \ref{prop:gapest} over $\mu$ gives
\begin{align*}
&\E_{\mu}\left[\log \sval_{k}(A)-\log\sval_{k+1}(A)\right]\\
&\ge \E_{\mu}\left[\int \log 
\frac {\sval_1(A|_{V_{k+1}\perp V_{k-1}})}{\sval_2(A|_{V_{k+1}\perp V_{k-1}})}\,
dm(\mc F_1)\right]-C_d\\
&=\int\int \log \frac {\sval_1(A|_{V_{k+1}\perp V_{k-1}})}{\sval_2(A|_{V_{k+1}\perp V_{k-1}})}
\,d\mu(A)\,dm(\mc F_1)-C_d\\
&=\int\int\log (J(A|_{V_{k+1}\perp V_{k-1}})^{-1})_\text{max}\,d\mu(A)\,dm(\mc F_1)-C_d\\
&=\int\int\int\log (J(A|_{V_{k+1}\perp V_{k-1}})^{-1})_\text{max}\,d\mu_{\mc F_1,\mc F_2}(A)
\,d\mu_{\mc F_1}(\mc F_2)\,dm(\mc F_1)-C_d\\
&\ge \int\int \EP(\mc F_1,\mu_{\mc F_1,\mc F_2})\,d\mu_{\mc F_1}(\mc F_2)\,dm(\mc F_1)-C_d\\
&=\int\EPIA(\mc F_1,\mu)\,dm(\mc F_1)-C_d,
\end{align*}
where $\mc F_1$ denotes the pair $(V_{k-1},V_{k+1})$ and we used Lemma 
\ref{lem:entropy_volume_distortion}
in the sixth line. 
\end{proof}

\section{Entropy of the induced random bundle map on miniflags}\label{sec:pointwise_entropy_est}

Next we will show that a perturbation of size $\epsilon$ gives rise to 
pointwise entropy that is quadratic in $\epsilon$. 

Before we begin the proof, we outline the approach. We recall that we are considering the action on certain partial flags by 
matrices of the form $A+\xi$ where $A$ is fixed and $\xi$ is a random perturbation.
Let $(V_{k-1},V_{k+1})\in \PF(k-1,k+1)$, let $\mc F_0=\mc F(V_{k-1},V_{k+1})$,
and let $x=(V_{k-1},V_k,V_{k+1})\in \PF(k-1,k,k+1)$ be a point in the corresponding
miniflag. Note that matrices act naturally on partial flags.
We then form an $x$-dependent partition of the $\xi\in \R^{d\times d}$,
where $\xi\sim\xi'$ if $(A+\xi)(x)=(A+\xi')(x)$.
Call this partition $\mc{Q}$. For $Q\in \mc Q$, $(A+\xi)(V_{k-1},V_{k+1})$
is a constant element of $\PF(k-1,k+1)$ for all $\xi\in Q$. We consider the action of
the $(A+\xi)_{\xi\in Q}$ on $\mc F(V_{k-1},V_{k+1})$ and look, in particular, at the 
variability of the Jacobian of the action at the point $x$ in the miniflag. 

The most important point in the calculation is the following: there are some images of $x$ 
that are quite rare and hence do not contribute much entropy for two reasons. First, they do not 
contribute much due to their rarity, but second there is also not much variation under the dynamics. 
For example, consider the case where $\xi$ is extremely close to the boundary of the support of 
$\mu_{\epsilon}$: it is hard to ``vary" $\xi$ much while keeping $(A+\xi)(x)$ 
fixed. For this reason, we will simplify our work by restricting to images $(\mc{F}_1, y)$ 
given by an element $A+\xi$ where $\xi$ is deep inside the support of $\mu_{\epsilon}$.
The concavity of entropy allows us to derive a lower bound for the entropy based on the
action of these deep $\xi$'s. 

\begin{prop}\label{prop:fiberwise_entropy_invertible_case}
Suppose that $\phi$ is a continuous density on 
$M_{d\times d}(\R)$ such that there exists $C>0$ with $\phi(A)\le C/\|A\|^{d^2+1}$
for all $A$. There exists $C_{\phi}$ 
such that the following holds. Suppose that $M\ge 1$ and $0\le \epsilon\le 1$ are given. 
Let $E$ be the absolutely continuously distributed matrix-valued random variable with density $\phi$.
If $A$ is any matrix with $\|A\|\le M$,
let the measure $\mu_{\epsilon}$ be the distribution of
\[
A+\epsilon E.
\]
Then $\mu_{\epsilon}$ has uniform fiberwise entropy gap of at least $C_{\phi}\epsilon^2/M^2$ when acting on 
the bundle of miniflags, $\PF(k-1,k,k+1)$ over $\PF(k-1,k+1)$
for any $1\le k<d$. 
\end{prop}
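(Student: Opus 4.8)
The plan is to prove a quadratic‑in‑$\epsilon$ lower bound for the pointwise entropy and feed it into the Fubini argument for bundles, Proposition~\ref{prop:fubini_argument_for_bundles}. I would first normalize away $M$: since a linear map and any nonzero scalar multiple of it induce the same map on every flag and every miniflag, $\mu_{\epsilon}$ has the same fiberwise push‑forward entropy as the distribution of $(A/M)+(\epsilon/M)E$, so it is enough to establish a lower bound of the form $C_\phi\epsilon^2$ under the extra assumption $\|A\|\le 1$; applying it with noise level $\epsilon/M$ then gives $C_\phi\epsilon^2M^{-2}$ in general. Under this normalization every $A+\epsilon E$ with $\|E\|$ bounded has norm bounded by a constant depending only on $\phi$ and $d$, which is what forces the estimate to be uniform in $A$. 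By continuity of $\phi$ choose $\xi_*$, $r_0>0$ and $c_0>0$ with $\phi\ge c_0$ on $B_{2r_0}(\xi_*)$, and let $G=\{A+\epsilon E:\ E\in B_{r_0}(\xi_*)\}$, so that $p_0:=\mu_{\epsilon}(G)\ge c_0\vol(B_{r_0})>0$ depends only on $\phi$ and $d$. I will verify the hypotheses of Proposition~\ref{prop:fubini_argument_for_bundles} with this $G$ and $\beta=C_\phi\epsilon^2$.

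Now the pointwise estimate. Fix fibers $\mc F_1=\mc F(V_{k-1},V_{k+1})$ and $\mc F_2$, a point $y\in\mc F_2$ and a point $x=(V_{k-1},V_k,V_{k+1})\in\mc F_1$; set $Q=\{E:(A+\epsilon E)(x)=y\}$ and $\mu_\xi=\mu_{\epsilon}|_Q$ (this is $\mu_{\mc F_1,\mc F_2}$ conditioned on the preimage of $y$ being $x$), and assume $\supp\mu_\xi\cap G\neq\emptyset$. Pick unit vectors $\hat\ell$ spanning $V_k\ominus V_{k-1}$ and $\hat u$ spanning $V_{k+1}\ominus V_k$; for $E\in Q$ the images $(A+\epsilon E)(V_{k-1})\subset (A+\epsilon E)(V_k)\subset (A+\epsilon E)(V_{k+1})$ are constant, so let $\hat\ell',\hat u'$ be fixed unit vectors spanning the corresponding one‑dimensional pieces downstairs. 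Then the induced map of miniflags $\mc F_1\to\mc F_2$, read as a map of projective lines, is the projectivization of the upper‑triangular matrix $\left(\begin{smallmatrix}a(E)&b(E)\\ 0&c(E)\end{smallmatrix}\right)$ in these bases, and a direct computation as in the proof of Lemma~\ref{lem:volume_distortion_RP1} shows its inverse Jacobian at the (fixed) point $y$ equals $|a(E)/c(E)|$; write $X=|a/c|/\E_{\mu_\xi}|a/c|$. The key algebraic fact I would establish is that replacing $\epsilon E$ by $\epsilon E+s\,\hat\ell'\hat\ell^{\top}$ changes $a$ to $a+s$ and leaves $b$, $c$ and those three images unchanged, while replacing $\epsilon E$ by $\epsilon E+\delta\,\hat u'\hat u^{\top}$ changes $c$ to $c+\delta$ and nothing else — one uses $\hat\ell\perp V_{k-1}$, $\hat u\perp V_k$, $\hat\ell'\in(A+\epsilon E)(V_k)$, $\hat u'\in(A+\epsilon E)(V_{k+1})$ — so both moves keep $E$ in $Q$. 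With an $O(1)$ translation budget inside $B_{2r_0}(\xi_*)$ I would first pass to a point $E^{**}\in Q\cap B_{2r_0}(\xi_*)$ with $c_1\epsilon\le a(E^{**}),c(E^{**})\le C_2$ ($c_1,C_2$ depending only on $\phi,d$), then restrict to a box $\mc B$ around $E^{**}$ inside $Q$ of constant size on which the conditional density of $\mu_\xi$ is comparable to Lebesgue (coarea) and $|a/c|$ stays within a bounded factor of $a(E^{**})/c(E^{**})$. On $\mc B$ the function $(a(E^{**})+\epsilon s)/(c(E^{**})+\epsilon\delta)$ has standard deviation at least a constant times $\epsilon\,(a(E^{**})^{-1}+c(E^{**})^{-1})$ times its mean, hence at least $c_3\epsilon$ times its mean since $a(E^{**}),c(E^{**})\le C_2$; moreover the decay $\phi(E)\le C\|E\|^{-d^2-1}$ bounds the total $\phi$‑mass of every cell $Q$ and so gives $\mu_\xi(\mc B)\ge c_4>0$. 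Lemma~\ref{lem:convexity_xlogx}, applied on $\mc B$ to $X/X_{\mc B}$ (which is bounded there), turns the first fact into $\E_{\mu_\xi|_{\mc B}}\psi(X/X_{\mc B})\ge c_5\epsilon^2$, where $\psi(t)=t\log t$ and $X_{\mc B}=\E_{\mu_\xi|_{\mc B}}X$.

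The step I expect to be the main obstacle, and where I would spend the most effort, is converting this box estimate into a genuine lower bound for $h_{\PW}(y,\mu_\xi)=\E_{\mu_\xi}\psi(X)$: the integral runs over the whole cell $Q$, $X$ is unbounded there (the second singular value of $(A+\epsilon E)|_{V_{k+1}\perp V_{k-1}}$ is not bounded below on $Q$), and a naive restriction to $\mc B$ is defeated by the $-1/e$ floor of $\psi$ on the complement. I would get around this by using, at the last step, the tail‑insensitive bound $\E\psi(X)\ge\tfrac12(\E|X-1|)^2$ for $X\ge 0$ with $\E X=1$ (which follows from $t\log t-t+1\ge (t-1)^2/(t+1)$ together with Cauchy--Schwarz), and then bounding $\E_{\mu_\xi}|X-1|\ge\int_{\mc B}|X-1|\,d\mu_\xi$ from below by a constant multiple of $\epsilon$: if $X_{\mc B}$ is bounded away from $1$ this is immediate from $\mu_\xi(\mc B)\ge c_4$, and if $X_{\mc B}$ is comparable to $1$ it follows from the within‑box relative spread of size at least $c_3\epsilon$ (and the estimate above). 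This yields $h_{\PW}(y,\mu_\xi)\ge C_\phi\epsilon^2$ whenever $\supp\mu_\xi$ meets $G$, so Proposition~\ref{prop:fubini_argument_for_bundles} with $\beta=C_\phi\epsilon^2$ gives $\int\Phi_{\mu_{\mc F_1,\mc F_2}}(\nu)\,d(\mu_{\epsilon}*\delta_{\mc F_1})(\mc F_2)\ge C_\phi\epsilon^2 p_0$ for every absolutely continuous $\nu$ on $\mc F_1$, and since $\mu_{\epsilon}$ is absolutely continuous the minimizers of the fiberwise Furstenberg entropy are absolutely continuous, so $\EPIA(\mc F,\mu_{\epsilon})\ge C_\phi\epsilon^2 p_0$ uniformly in $\mc F$ — which is the assertion, with $\epsilon/M$ replacing $\epsilon$ for general $M$. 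The remaining ingredients (the scalar reduction, the coarea and disintegration comparisons for the conditional densities, and the measurability bookkeeping) are routine.
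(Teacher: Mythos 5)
Your overall architecture matches the paper's (a quadratic pointwise-entropy bound fed into Proposition~\ref{prop:fubini_argument_for_bundles}), and several of your ingredients are sound: the scalar reduction to $\|A\|\le 1$, the observation that the rank-one perturbations $s\,\hat\ell'\hat\ell^{\top}$ and $\delta\,\hat u'\hat u^{\top}$ preserve the cell $Q$ while shifting only $a$ and $c$, the use of the decay of $\phi$ to bound the total mass of each cell, and the Pinsker-type inequality $\E\psi(X)\ge\tfrac12(\E|X-1|)^2$. But there is a genuine gap at exactly the step you flagged as the main obstacle, and the Pinsker workaround does not close it.

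The problem is your choice of conditional measures: you take $\mu_\xi$ to be $\mu_\epsilon$ conditioned on the \emph{whole} cell $Q=\{E\colon (A+\epsilon E)(x)=y\}$. On $Q$, in the adapted bases, both $a$ and $c$ are free affine coordinates, and $\Jac_y^{-1}=|a/c|$. The normalizer in Definition~\ref{defn:pointwise_entropy} is $\int \Jac_y^{-1}(f)\,d\mu_\xi(f)=\E_{\mu_\xi}|a/c|$, and this is generically $+\infty$: the marginal density of $c$ under $\mu_\xi$ is bounded and typically positive and continuous at $c=0$ (take $A=0$ and $\phi$ positive near the origin), so integrating $|c|^{-1}$ against it diverges. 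When the normalizer is infinite, $X\equiv 0$, the hypothesis $\E X=1$ of your Pinsker step fails, and $h_{\PW}(y,\mu_\xi)$ as defined is $0$, not $\ge C_\phi\epsilon^2$; even when the normalizer is finite it may be dominated by the small-$|c|$ part of $Q$, which your box $\mc B$ does not see. The paper avoids this by refining the partition one step further, into lines $\{B+t\,f_k\otimes e_k\}$ on which $c$ is constant and only $a=t$ varies: there the normalizer is $|c|^{-1}\E|t|<\infty$ by the decay of $\phi$, the conditional law of the Jacobian is an explicit rescaled one-dimensional distribution handled by Lemma~\ref{lem:convexity_xlogx}, and the lines with small $|c|$ are simply discarded by the good-set indicator in the Fubini argument (contributing $h_{\PW}\ge 0$). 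Relatedly, the paper's good set $G_{\epsilon,2}$ is a cube of size $\Theta(\epsilon)$ around a perturbed $A'$ with $s_d(A')\ge\epsilon/2$, so the conorm bound (hence $|a|,|c|\gtrsim\epsilon$) holds on the entire good region, not merely at a translated point $E^{**}$. Your box-and-Pinsker computation would go through essentially verbatim once you condition on such one-dimensional fibers, so the fix is to refine the partition rather than to change the final inequality.
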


\begin{proof}
We assume that $\phi$ is positive at the zero matrix. This is without loss of generality as if 
$\phi$ is positive at some matrix $E_0$, then we may replace $\phi(\cdot)$ by $\phi(\cdot+E_0)$
and replace $A$ by $A+\epsilon E_0$, and arrive at a similar conclusion (with $C_\phi$
modified to take account of the possibly greater norm of $A+\epsilon E_0$). 
Let $K=\max\phi$.
For simplicity, we further assume 
that $\phi(E)\ge a>0$ whenever $\|E\|_\infty\le 1$, where $\|E\|_\infty$ denotes $\max_{i,j}|E_{ij}|$. 
Clearly, a bound of this type holds on some axis-aligned cube centered at the origin. If it were
a smaller cube, some minor adjustments would need to be made. 

Next, we will find a subset of the support of $\mu_{\epsilon}$ that is disjoint from the
non-invertible matrices and large enough that we may ``vary freely" within it.
Let $A$ be fixed with $\|A\|\le M$ and
let $O_1DO_2$ be a singular value decomposition of $A$.
Then there exists a diagonal matrix $D'$ with $s_d(D')\ge \frac\epsilon 2$ such that $\|D'-D\|\le
\frac\epsilon2$. Set $A'=O_1D'O_2$ so that $\|A'-A\|_\infty\le \|A'-A\|=\|D'-D\|\le\frac\epsilon 2$
and $s_d(A')=s_d(D')\ge\frac\epsilon 2$.
Now if $\|B-A'\|_\infty<\frac\epsilon{4 d}$, then $\|B-A'\|<\frac\epsilon 4$, so that 
by the Lipschitz property of singular values, $s_d(B)\ge s_d(A')-\frac\epsilon 4\ge \frac\epsilon 4$. 
Let $G_{\epsilon,1}=\{B\colon \|B-A'\|_\infty\le\frac\epsilon{4 d}\}$ and
let $G_{\epsilon,2}=\{B\colon \|B-A'\|_\infty\le \frac\epsilon{8 d}\}$.
Note that any line $\ell$ 
containing a point $B\in G_{\epsilon,2}$ satisfies that the length of the intersection 
$G_{\epsilon,1}\cap \ell$ is at least $\frac{\epsilon}{4 d}$. 
We see that $\mu_\epsilon(G_{\epsilon,1})\ge a(\frac 1{2d})^{d^2}$, as the space of $d\times d$ matrices has dimension $d^2$.
We suppress the $\epsilon$ subscript, so just write $\mu$ for the remainder
of the proof. 
By Lemma \ref{lem:convexity_entropy}, at the cost of introducing a factor
$a/(2d)^{d^2}$, we may replace $\mu$ with normalized Lebesgue measure on $G_{\epsilon,1}$.

We aim to apply Proposition \ref{prop:fubini_argument_for_bundles} to the bundle of miniflags.
Let $\mc F_1=\MF(V_{k-1},V_{k+1})$ be a miniflag. Let $v_1,\ldots,v_d$ 
be a (measurably chosen) orthonormal basis for $\R^d$ such that $\lin(v_1,\ldots,v_j)=V_j$
for $j=k-1,k+1$. 
In order to apply the proposition, we need to condition on the image
of $\mc F_1$, $\mc{F}_2$, giving the measure $\mu_{\mc{F}_2}$ and then subdivide further with a partition 
$\Xi_y$ that keeps the preimage of the point $y\in \MF(\mc F_2)$ constant.

To this end, we define five measurable partitions of $M_{d\times d}(\R)$, $\Xi_1,\ldots,\Xi_5$. 
The goal is to find partitions suitable for Proposition \ref{prop:fubini_argument_for_bundles}
where the densities of the fiber measures are easy to compute. It turns out
that these computations are simpler if the partitioning is done in stages rather than all 
at once. In particular, we repeatedly make use of the fact that if the elements of a partition 
into parallel hyperplanes have absolutely continuous fiber measures and the partition
is refined by a second partition into lower-dimensional parallel hyperplanes 
then, up to normalization, the fiber measures of the refined partition have the 
same density as the fibers of the coarser partition.

For $\mathbf w=(w_1,\ldots,w_{k-1},w_{k+2},\ldots,w_d)\in (\R^d)^{d-2}$, let
$W_{k-1}=\lin(w_1,\ldots,w_{k-1})$, and define
$$
\xi_1(\mathbf w)=\{A\colon Av_j=w_j\text{ for all $j\ne k,k+1$}\},
$$
and we let $\Xi_1=\{\xi_1(\mathbf w)\colon \mathbf w\in (\R^d)^{d-2}\}$. 
Since $\Xi_1$ is a decomposition of $M_{d\times d}(\R)$ into parallel hyperplanes, the fiber measures on the $\xi_1(\mathbf w)$
atoms have a density proportional to the original density $\phi$. The $\Xi_1$ fibers are $2d$-dimensional.

Given $\mathbf w$ as above, for $b_k,b_{k+1}\in W_{k-1}$, define
$$
\xi_2(\mathbf w,b_k,b_{k+1})=\{A\in \xi_1(\mathbf w)\colon \Pi_{W_{k-1}}(Av_j)=b_j\text{ for $j=k,k+1$}\}.
$$
Here, as elsewhere, $\Pi_V$ denotes the orthogonal projection onto $V$.
We then let $\Xi_2$ be $\{\xi_2(\mathbf w,b_k,b_{k+1})\colon \mathbf w\in (\R^d)^{d-2},\ b_k,b_{k+1}\in W_{k-1}\}$.
As above, the fiber measures on the fibers of $\Xi_2$ have density $\phi$. The fibers of $\Xi_2$ are 
$(2d-2k+2)$-dimensional.
Note that although $\Xi_1$ is a parallel decomposition of $M_{d\times d}$ and $\Xi_2$ is a parallel 
decomposition of $\Xi_1$, $\Xi_2$ is not, itself, a parallel decomposition of $M_{d\times d}$. This
is the reason that we choose to use a sequence of partitions.

Next, for $U\in \Gr_2(W_{k-1}^\perp)$, define
$$
\xi_3(\mathbf w,b_k,b_{k+1},U)=\{A\in \xi_2(\mathbf w,b_k,b_{k+1})\colon \Pi_{W_{k-1}^\perp}(Av_j)\in U\text{ for $j=k,k+1$}\}.
$$
Let $\Xi_3$ be $\{\xi_3(\mathbf w,b_k,b_{k+1},U)\colon \mathbf w\in (\R^d)^{d-2},\ b_k,b_{k+1}\in W_{k-1},
U\in\Gr_2(W_{k-1}^\perp)\}$.
The density on $\xi_3(\mathbf w,b_k,b_{k+1},U)$ is proportional to
\begin{equation}\label{eq:xi3fibredensity}
\begin{split}
\phi_3(A)&=
\sin\angle(\Pi_U(Av_k),\Pi_U(Av_{k+1}))\big)^{d-k-1}\mathbf 1_{G_{\epsilon,1}}(A)\\
&
=\|\Pi_UAv_k\wedge \Pi_UAv_{k+1}\|^{d-k-1}\mathbf 1_{G_{\epsilon,1}}(A).
\end{split}
\end{equation}
A proof of this is given, using the co-area formula, in Appendix \ref{app:coarea}.
The $\Xi_3$ fibers are 4-dimensional. 

So far, $\Xi_1$ specifies the image under $A$ of $v_1,\ldots,v_{k-1}$ and $v_{k+2},\ldots,v_d$; 
$\Xi_2$ additionally specifies the projection of $Av_k$ and $Av_{k+1}$ to $W_{k-1}$;
$\Xi_3$ further specifies the 2-dimensional subspace of $W_{k-1}^\perp$ that $\Pi_{W_{k-1}^\perp}Av_k$ and
$\Pi_{W_{k-1}^\perp}Av_{k+1}$ lie in. 
Given an element $\mc F_2=(W_{k-1},W_{k+1})\in\PF(k-1,k+1)$, we observe that the elements of 
$M_{d\times d}$ sending $\mc F_1$ to $\mc F_2$ are given by 
$$
\bigcup \xi_3(\mathbf w,b_k,b_{k+1},U),
$$
where the union is taken over $\mathbf w$ such that $\lin(w_1,\ldots,w_{k-1})=W_{k-1}$; $b_k,b_{k+1}\in W_{k-1}$; and
$\lin(W_{k-1},U)=W_{k+1}$. That is, the partition of $M_{d\times d}$ according to $A(\mc F_1)$
is coarser than $\Xi_3$. 

We now further partition an element $\xi_3(\mathbf w,b_k,b_{k+1},U)$ of $\Xi_3$.
Since these data will not change, we 
suppress the $\mathbf w,b_k,b_{k+1},U$ from the notation.
Let $W_{k+1}=\lin(W_{k-1},U)$ and let $(W_{k-1},W_k,W_{k+1})$ be an
element of $\MF(W_{k-1},W_{k+1})$ (playing the role of $y$ in Proposition \ref{prop:fubini_argument_for_bundles}). 
Let $y$ be a unit vector in $U\cap W_k$ 
and let $y^\perp$ be a unit vector in $U$ perpendicular to $y$.
Let
$$
\xi_4(\theta)=\{A\in \xi_3\colon \langle y^\perp,
\Pi_{W_{k-1}^\perp} A(\cos\theta \,v_k+\sin\theta\, v_{k+1})\rangle=0\},
$$
and let $\Xi_4=\{\xi_4(\theta)\colon \theta\in [0,\pi)\}$. 
That is, we partition $\xi_3$ according to the direction of the preimage 
of $y$ in $\lin(v_k,v_{k+1})$:
$\xi_4(\theta)$ consists of those $A$'s such that 
$\Pi_{W_{k-1}^\perp}A(\cos\theta\,v_k+\sin\theta\,v_{k+1})\in\lin(y)$.
These fibers are 3-dimensional.
The fiber measure density is proportional to
\begin{equation}\label{eq:xi4fibredensity}
\phi_4(A)=|\langle y^\perp,A(-\sin \theta \,v_k+\cos\theta\,v_{k+1})\rangle|\phi_3(A).
\end{equation}
Again, a proof is given in Appendix \ref{app:coarea}.

Finally, we partition $\xi_4(\theta)$ into 1-dimensional fibers
by specifying $\Pi_U(A(-\sin\theta\,v_k+\cos\theta\, v_{k+1}))$.
The partition $\Xi_5$ arising from this is again a parallel decomposition of $\xi_4(\theta)$ so 
the fiber density, $\phi_5(A)$, is proportional to $\phi_4(A)$. 

By definition, the $\Xi_5$ fiber consists of matrices $A$ such that
$Av_j=w_j$ for all $j\ne k,k+1$; $\Pi_{W_{k-1}}Av_j=b_j$ for $j=k,k+1$;
$\Pi_{W_{k-1}^\perp}A(\cos\theta v_j+\sin\theta v_{j+1})\in\lin(y)$
and $\Pi_{W_{k-1}^\perp}A(-\sin\theta v_j+\cos\theta v_{j+1})=z$ for some $z\in W_{k+1}\ominus W_{k-1}$.
For a cleaner description, define a new orthonormal basis by 
$v'_j=v_j$ for $j\ne k,k+1$, $v'_k=\cos\theta v_k+\sin\theta v_{k+1}$, 
$v'_{k+1}=-\sin\theta v_k+\cos\theta v_{k+1}$. Let $b'_k=\cos\theta b_k+\sin\theta b_{k+1}$
and $b'_{k+1}=-\sin\theta b_k+\cos\theta b_{k+1}$. 
With respect to this basis, $\xi$ is the set of $A_t$ where $A_t$ satisfies
\begin{equation}\label{eq:A_tdef}
\begin{split}
A_tv_j'&=w_j\text{ for $j\ne k,k+1$;}\\
A_tv_k'&=ty+b_k'\\
A_tv_{k+1}'&=z+b_{k+1}'.
\end{split}
\end{equation}
That is, $A_t=X+tY$ where $X=WV^T$, $Y=yv_k'^T$, $W$ is the matrix with columns $w_1,\ldots,w_{k-1},
b_k',z+b_{k+1}',w_{k+2},\ldots,w_d$, and $V^T$ is the matrix with rows $v'_1,\ldots,v'_d$.

To apply Proposition \ref{prop:fubini_argument_for_bundles}, 
it suffices to show that for each element of the partition $\Xi_5$ that intersects $G_{\epsilon,2}$ 
that there is a lower bound on the pointwise entropy of the conditional measure along that atom.
We now do this, making use of the expression for the density in the fibers that we have computed.
Let $\xi=\xi_5(\theta,z)$. 
Notice that if $b_k'=\beta_1w_1+\ldots +\beta_{k-1}w_{k-1}$, then 
$A_t(v_k'-(\beta_1v_1'+\ldots +\beta_{k-1}v_{k-1}'))=ty$
so that if $A_t\in G_{\epsilon,1}$ then $|t|\ge \frac\epsilon 4$ as $y$ is a unit vector. 
We also have $\|A_t\|\le M+1$ if $A_t\in G_{\epsilon,1}$ by construction of $G_{\epsilon,1}$. 
Since $\|A_t\|\ge t$, we see that $|t|\le M+1$ if $A_t\in G_{\epsilon,1}$. 
As observed above,
if $\xi$ intersects $G_{\epsilon,2}$,
then the set of $t$ for which $A_t\in G_{\epsilon,1}$ is an interval of length
at least $\frac{\epsilon}{4d}$.
Since $A_t=X+tY$, where $Y$ is a unit matrix, $\|A_t-A_{t'}\|=|t-t'|$ so 
by the definition of $G_{\epsilon,1}$, the interval has length at most $\frac\epsilon 2$. 
Letting $[p,q]$ be the interval of $t$'s 
for which $A_t\in G_{\epsilon,1}$ we have shown
\begin{equation}\label{eq:p,q_bounds}
\begin{split}
&\tfrac\epsilon 4\le p<q\le M+1;\\
&\tfrac\epsilon{4d}\le |p-q|\le \tfrac\epsilon 2.
\end{split}
\end{equation}
Recall the density on the $\xi_5$ fiber, $\phi(A_t)$ for $t\in [p,q]$ is 
proportional to 
$$
\|\Pi_U(A_tv_k)\wedge \Pi_U(A_tv_{k+1})\|^{d-k-1}|\langle y^\perp,A_tv'_{k+1}\rangle|\mathbf 1_{G_{\epsilon,1}}(A_t).
$$
Using \eqref{eq:A_tdef}, this is proportional to $\rho(t)=t^{d-k-1}\mathbf 1_{[p,q]}(t)$.

Using \eqref{eq:p,q_bounds}, we see that for all $p\le t,t'\le q$,
\begin{equation}\label{eq:density_ratio_bound}
\frac{\rho(t)}{\rho(t')}\le \left(\frac qp\right)^{d-k-1}\le 3^{d-k-1}.
\end{equation}
We now compute the Jacobian of the map $\bar A_t^{-1}$ induced by $A_t^{-1}$ from $\mc F(W_{k-1},W_{k+1})$
to $\mc F(V_{k-1},V_{k+1})$ at the point $y=(W_{k-1},W_k,W_{k+1})$.
The elements of the miniflags are then parameterized by parameters
$\sigma$ and $\tau$ corresponding to the direction $\cos\sigma v_k'+\sin\sigma v_{k+1}'$ in 
$\MF(\mc F_1)$ and $\cos\tau y+\sin\tau y^\perp$ in $\MF(\mc F_2)$. 
Let $z=ay+by^\perp$. We then have $\Pi_U A_t v_k'=ty$ and $\Pi_U A_t v_{k+1}'=ay+by^\perp$ for all $t$. 
Hence 
\[
\Pi_U A_t(v_k'+h v_{k+1}')=(t+ah)\left(y+\frac {bh}{t+ah}y^\perp\right).
\]
As $\lim_{h\to 0}\big(\frac{bh}{t+ah}/h)=b/t$, so we see that the
Jacobian of $\bar A_t$ at $v_k'$ is given by $b/t$.
Now consider the quantity:
\[
Z\coloneqq \Jac_y(\bar A_t^{-1}) / \int \Jac_y(\bar A_t^{-1})\,d\mu_\xi(A_t),
\]
which appears in the definition of the pointwise entropy. We see the $Z$ value 
corresponding to $t$ is $Z_t=(t/b)/\int_p^q (s/b)\rho(s)\,ds=t/t^*$, where
$t^*=\int_p^q s\rho(s)\,ds$.
Using \eqref{eq:density_ratio_bound}, we see the quantity 
$Z:=\Jac_y(\bar A_t^{-1}) / \int \Jac_y(\bar A_t^{-1})\,d\mu_\xi(A_t)$ appearing in 
the pointwise entropy is bounded above by $3^{d-k-1}$. 
That is, $Z$ takes values in $[\frac p{t^*},\frac q{t^*}]$ and has density $\rho_Z(z)=\rho(z/t^*)/t^*$.
Using Lemma \ref{lem:convexity_xlogx}, we obtain 
$h_\PW(y,\mu_\xi)\ge \tfrac16 \var Z$.
Since $\rho_Z(z)/\rho_Z(z')\le 3^{d-k-1}$ for all $z,z'$ in the range of $Z$, Lemma 
\ref{lem:variance_lower_bound} applies, giving $h_\PW(y,\mu_\xi)\ge \frac 16(p-q)^2/({t^*}^23^{2d-2k-2})$.
By \eqref{eq:p,q_bounds}, $t^*$ is bounded above  by $M+1$, so we find that: 
$$
h_\PW(y,\mu_\xi)\ge C\epsilon^2/M^2,
$$
where $C$ does not depend on $A$ or the particular fiber as long as it intersects $G_{\epsilon,2}$.
Applying Proposition \ref{prop:fubini_argument_for_bundles} and 
re-inserting the factor $a/(2d)^{d^2}$ from above from when we replaced $\mu$ by a measure
supported on a cube, the conclusion now follows.
\end{proof}

\section{Conclusion}\label{sec:conclusion}
We can now establish the main theorem of this paper.
This 
section is structured as follows. First, we obtain a consequence of the main result of 
\cite{gorodetski2023nonstationary}, 
which shows that almost surely the logarithmic singular values of
realizations of matrix products differ from their expectation by $o(n)$.
We then relate our $\GL(d,\R)$ matrices to $\SL(d,\R)$
matrices and control some moments required by \cite{gorodetski2023nonstationary}
before finishing the proof.

\subsection{Another consequence of Gorodetski-Kleptsyn}
We first record a consequence of the main theorem in \cite{gorodetski2023nonstationary}. 
We denote by $\SLpm(d,\R)$ the collection of $d\times d$ matrices of determinant $\pm 1$.
Given a matrix 
$A\in \SLpm(\R^d)$, we will say that $A$ acts on we write $\bigwedge^k\R^d$ by
\[
(\textstyle{\bigwedge^k}A)(v_1\wedge \cdots \wedge v_n)=(Av_1)\wedge\cdots\wedge(Av_n). 
\]
Similarly, given a miniflag $\mc F=\mc F(V_{k-1},V_{k+1})$, the action of $A$ on $\mc F$ is given
by the matrix $A|_{V_{k+1}\perp V_k}$.

\begin{lem}\label{lem:no_invariants_big_support}
Suppose that $\mu$ is a measure defined on $\SLpm(d,\R)$ that has a component that equivalent to 
volume on a neighborhood of $\Id\in \SLpm(d,\R)$. Then there do not exist any pair of probability 
measures $\nu_1$ and $\nu_2$ on $\mathbb{P}(\R^{d})$, such that $\mu$-a.s.~$A_*\nu_1=\nu_2$. 
Further, for $1\le k\le d-1$, there do not exist probability measures $\nu_1$ and $\nu_2$ on 
$\mathbb{P}(\bigwedge^k \R^d)$ such that $\mu$-almost surely $(\bigwedge^k A)_*\nu_1=\nu_2$. 

Further, for $1\le k\le d-1$, there does not exist a non-trivial pair of subspaces $V_1$ 
and $V_2$ of $\bigwedge^k \R^d$ such that $\mu$-a.s.~$(\bigwedge^k A)_*V_1=V_2$.
\end{lem}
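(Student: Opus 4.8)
The plan is to upgrade each ``$\mu$-almost surely'' hypothesis to genuine $\SL(d,\R)$-invariance and then to rule out $\SL(d,\R)$-invariant probability measures and invariant subspaces directly. For the upgrade: since $\mu$ has a component equivalent to volume on a neighborhood $U$ of $\Id$, any event holding $\mu$-almost surely holds for Lebesgue-almost every $A\in U$. In each of the three cases the relevant set --- $\{A\colon A_*\nu_1=\nu_2\}$, $\{A\colon(\bigwedge^kA)_*\nu_1=\nu_2\}$, or $\{A\colon(\bigwedge^kA)V_1=V_2\}$ --- is closed in $\GL(d,\R)$, since $A\mapsto A_*\nu_1$ and $A\mapsto(\bigwedge^kA)_*\nu_1$ are weak-$*$ continuous and $A\mapsto(\bigwedge^kA)V_1$ is continuous into the relevant Grassmannian. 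A closed set of full Lebesgue measure inside the open set $U$ is all of $U$, so each identity in fact holds for every $A\in U$. Evaluating at $A=\Id$ forces $\nu_1=\nu_2=:\nu$ (resp.\ $V_1=V_2=:V$), and then $\{A\colon A_*\nu=\nu\}$ (resp.\ $\{A\colon(\bigwedge^kA)V=V\}$) is a closed subgroup of $\GL(d,\R)$ containing the identity neighborhood $U\cap\SL(d,\R)$ of the connected group $\SL(d,\R)$. Since a connected topological group is generated by any neighborhood of the identity, $\nu$ is $\SL(d,\R)$-invariant and $V$ is an $\SL(d,\R)$-invariant subspace of $\bigwedge^k\R^d$.

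The subspace assertion then follows from irreducibility of $\bigwedge^k\R^d$ as an $\SL(d,\R)$-module for $1\le k\le d-1$: an invariant subspace is invariant under the diagonal torus, hence spanned by a subset $\mc S$ of the basis vectors $e_I=e_{i_1}\wedge\cdots\wedge e_{i_k}$; applying the elementary unipotent $\Id+tE_{ab}$ with $a\notin I\ni b$ shows that $\mc S$ is stable under replacing one index of $I$, and since the resulting ``swap graph'' on $k$-subsets is connected, $\mc S$ is empty or consists of all of them. Hence $V\in\{0,\bigwedge^k\R^d\}$, contradicting non-triviality.

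For the measure assertions, write $W$ for $\R^d$ (the $k=1$ case) or $\bigwedge^k\R^d$ --- an irreducible, non-trivial $\SL(d,\R)$-module --- and suppose $\nu$ on $\mathbb P(W)$ is $\SL(d,\R)$-invariant. Let $W_0\subseteq W$ have minimal dimension subject to $\nu(\mathbb P(W_0))=1$; since $\mathbb P(W_1\cap W_2)=\mathbb P(W_1)\cap\mathbb P(W_2)$ this $W_0$ is unique, hence $\SL(d,\R)$-invariant, hence (as $W_0\ne 0$ and $W$ is irreducible) all of $W$. So no proper subspace of $W$ has full $\nu$-measure; in particular $\nu(\mathbb P(H))<1$ for every hyperplane $H$. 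Now fix a diagonal one-parameter subgroup $g_t=\diag(e^{\lambda_1t},\dots,e^{\lambda_dt})\in\SL(d,\R)$ whose induced action on $W$ has a strictly dominant eigenvalue, with eigenline $[v_0]$ and with the remaining eigenspaces spanning a hyperplane $H$: for $W=\R^d$ take $g_t=\diag(e^{(d-1)t},e^{-t},\dots,e^{-t})$ and $[v_0]=[e_1]$; for $W=\bigwedge^k\R^d$ take $\lambda_1>\dots>\lambda_d$ with $\sum_i\lambda_i=0$ and all $\binom dk$ partial sums $\sum_{i\in I}\lambda_i$ distinct (for instance $\lambda_i$ equal to $3^{d-i}$ minus their common mean), so that $[v_0]=[e_1\wedge\cdots\wedge e_k]$. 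Then $g_t[\omega]\to[v_0]$ for every $[\omega]\notin\mathbb P(H)$, and since $(g_t)_*\nu=\nu$ a Fatou-type argument yields $\nu(\{[v_0]\})\ge 1-\nu(\mathbb P(H))>0$. Consequently every point of the $\SL(d,\R)$-orbit of $[v_0]$ is an atom of $\nu$ of this same positive mass; but that orbit is all of $\mathbb P(\R^d)$ when $W=\R^d$ and the Pl\"ucker image of $\Gr(k,d)$ when $W=\bigwedge^k\R^d$, both infinite, so $\nu$ would have infinite total mass --- a contradiction.

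The only genuinely non-routine point is the measure case on $\mathbb P(\bigwedge^k\R^d)$, which is not $\SL(d,\R)$-homogeneous once $1<k<d-1$, so a pure transitivity argument is unavailable; the minimal-subspace reduction (forcing $\nu$ to charge no hyperplane), together with a ``separating'' diagonal element and the fact that the orbit of a decomposable vector is the infinite Grassmannian, is what bridges this. The remaining ingredients --- the closedness/full-measure reduction, generation of $\SL(d,\R)$ by an identity neighborhood, and irreducibility of $\bigwedge^k\R^d$ --- are standard, and I would state them briefly.
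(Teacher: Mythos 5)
Your proof is correct, and while the opening reduction is essentially the paper's (both arguments use that the invariance conditions are closed, that a closed full-measure subset of the identity neighborhood is the whole neighborhood, and that a neighborhood of $\Id$ generates the connected group $\SL(d,\R)$, so the a.s.\ statements upgrade to genuine $\SL(d,\R)$-invariance of a single $\nu$ or $V$), you diverge in how the invariant objects are then ruled out. The paper cites two external facts: irreducibility of $\bigwedge^k\R^d$ as an $\SL(d,\R)$-module (via complexification and a reference to Knapp), and, for the measure case, proximality of $\bigwedge^k g$ for $g$ with eigenvalues of distinct moduli (Abels--Margulis--Soifer), from which it concludes the invariant measure would be a $\delta$-mass and then derives a contradiction from irreducibility. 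You instead prove irreducibility directly (torus weight spaces, elementary unipotents, connectivity of the Johnson graph on $k$-subsets) and replace the proximality citation with a self-contained Furstenberg-type argument: the unique minimal full-measure subspace is invariant, hence all of $W$ by irreducibility, so no hyperplane carries full mass; a diagonal one-parameter subgroup with a strictly dominant weight then forces an atom of mass at least $1-\nu(\mathbb P(H))>0$ at the attracting point, and invariance propagates this atom over the infinite orbit of $[e_1\wedge\cdots\wedge e_k]$ (the Pl\"ucker image of $\Gr(k,d)$), contradicting finiteness. What your route buys is self-containment and the explicit handling of the possibility that the invariant measure charges the repelling hyperplane --- a point the paper's ``proximal implies $\delta$-mass'' step leaves implicit; what the paper's route buys is brevity by outsourcing both nontrivial facts to the literature. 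Two cosmetic points: your ``Lebesgue-almost every $A\in U$'' should be read as volume (Haar) on the manifold $\SLpm(d,\R)$, matching the lemma's hypothesis, and your stabilizer argument should be phrased inside $\SLpm(d,\R)$ rather than $\GL(d,\R)$; neither affects the argument.
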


\begin{proof}
We begin with the statement about the action on $\mathbb P(\R^d)$.
Suppose for a contradiction that $\nu_1$ and 
$\nu_2$ are two probability measures on $\mathbb P(\R^d)$ such that $A_*\nu_1=\nu_2$ for $\mu$-a.e. $A$.
From the assumption, it suffices to consider 
the case that $\mu$ is fully supported on an open symmetric generating set $\mc{N}$ containing 
$\Id\in \SLpm(d,\R)$; by symmetric we mean that if $A\in \mc{N}$ then $A^{-1}\in \mc{N}$. As 
preserving a measure is a closed condition, we see that for any such pair $A_*\nu_1=\nu_2$ for all 
$A\in \mc{N}$. In particular, as $\Id\in \mc{N}$, this implies $\nu_1=\nu_2$. 
As $\mc{N}$ is symmetric and $\SL(d,\R)$ 
is connected, it generates $\SL(d,\R)$ as a semigroup. In particular, this 
implies that such a measure is 
invariant under all of $\SL(d,\R)$, which is impossible because this easily implies that $\nu$ must be a 
$\delta$-mass but $\SL(d,\R)$ does not preserve any fixed lines $[v_{\delta}]\in \RP^{d-1}$ where 
$\delta$ is supported.

When we consider the induced action on $\bigwedge^k\R^d$, the push-forward of $\mu$ by the map 
$A\mapsto \bigwedge^k A$ is not fully supported in a neighborhood of $\Id\in \SL(\bigwedge^k \R^d)$. 
The same considerations as before reduce to the case of showing that there is no measure $\nu$ on 
$\mathbb{P}(\bigwedge^k\R^d)$ invariant under all the $(\bigwedge^k)_*\mu$. From 
\cite[Cor.~3.10]{abels1995semigroups}, as long as the eigenvalues of an element 
$g\in \SL(\R^d)$ are distinct in modulus, then the induced action of $\bigwedge^k g$ on 
$\bigwedge^k \R^d$ is proximal.
By proximal, we mean that the 
action of $\|\bigwedge^k g^n\|^{-1}g^n$ converges to a rank $1$-matrix. In particular, this 
implies that $\mu$ must be a $\delta$-mass sitting on some vector $v_{\delta}\in \bigwedge^k\R^d$. 
However, as $(\bigwedge^k)_*\mu$ generates $\bigwedge^k\SL(\R^d)$, we see that $v_{\delta}$ is invariant 
under all of $\bigwedge^k \SL(\R^d)$. But this is impossible: $\bigwedge^k\SL(\R^d)$ is an irreducible 
representation of $\SL(d,\R)$.\footnote{For this fact, first $\bigwedge^k \SL(d,\mathbb{C})$ is an 
irreducible representation on $\bigwedge^k \mathbb{C}^d$ \cite[p.~340]{knapp2002lie}. As this is 
the complexification of $\bigwedge^k \SL(d,\R)$, we must have that $\bigwedge^k \SL(d,\R)$ is 
irreducible as well, as otherwise its complexification would be reducible.}

The proof of the statement about subspaces is similar and just uses that the representations 
$\bigwedge^k \SL(d,\R)$ are irreducible.
\end{proof}

\begin{rem}
Lemma \ref{lem:no_invariants_big_support} is stated for measures whose support contains a neighborhood 
of the origin. However, note that if $\mu$ is a measure as in Lemma \ref{lem:no_invariants_big_support}, 
then the same conclusions follow for the measure $\delta_A*\mu$ for any matrix $A\in \SLpm(d,\R)$.
\end{rem}

\begin{prop}\label{prop:growth_in_expectations_implies_almost_sure}
Suppose that $\mc{K}$ is a compact subset of the space of probability 
measures on $\SLpm(d,\R)$ and that any 
$\mu\in \mc{K}$ has a component that is equivalent to volume on a neighborhood of $\Id\in \SLpm(d,\R)$. 
Suppose that we have the following moment condition: there exist $\gamma,C$, such that for all 
$\mu\in \mc{K}$, $\int \|A\|^{\gamma}\,d\mu(A)<C$. 
Suppose that $(\mu_n)_{n\in \N}$ is a sequence of probability measures in $\mc{K}$. Let $T_n$ 
denote the (nonstationary) random product of the first $n$ matrices, so that $T_n$ is 
distributed according to $\mu^{*n}=\mu_n*\cdots*\mu_1$. Let
\[
L_{k,n}=\E\left[\log \prod_{j=1}^k\sval_k(T_n)\right]=\E\left[\log \|\textstyle\bigwedge^k T_n\|\right].
\]
Then for each $k$,
\[
\lim_{n\to \infty} \frac{1}{n} \left(\log\|\textstyle\bigwedge^k T_n\|-L_{k,n}\right)=0. 
\]
\end{prop}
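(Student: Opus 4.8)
The plan is to establish the convergence $\frac 1n(\log\|\bigwedge^k T_n\|-L_{k,n})\to 0$ almost surely by combining the main non-stationary Furstenberg-type theorem of \cite{gorodetski2023nonstationary} with the no-invariant-measure result of Lemma \ref{lem:no_invariants_big_support}. The key observation is that $A\mapsto \bigwedge^k A$ turns the product $T_n$ into a product of matrices acting on $\bigwedge^k\R^d$, and $\log\|\bigwedge^k T_n\|$ is the top logarithmic singular value of that product. So it suffices to prove the statement in the case $k=1$, applied to the pushed-forward measures $(\bigwedge^k)_*\mu_n$ on $\SLpm(\bigwedge^k\R^d)$ — one must first check that this family of measures still lies in a compact set, still satisfies a uniform moment bound (the exponent $\gamma$ may need to be divided by something depending on $k,d$ since $\|\bigwedge^k A\|\le \|A\|^k$, but a uniform bound of the required form persists), and still has no pair of probability measures $\nu_1,\nu_2$ on $\mathbb P(\bigwedge^k\R^d)$ with $(\bigwedge^k A)_*\nu_1=\nu_2$ almost surely. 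That last point is exactly the content of Lemma \ref{lem:no_invariants_big_support}, together with the subsequent remark allowing a deterministic matrix to be convolved in, which is what lets us handle each $\mu_n$ in the non-stationary sequence rather than a single measure.

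Next I would invoke the main theorem of \cite{gorodetski2023nonstationary}: under the hypotheses (compactness of the set of driving measures, uniform moment bound, and the absence of an almost-surely invariant pair of measures on projective space), the non-stationary random walk has a uniform spectral gap at the top, and in particular $\frac 1n\log\|\bigwedge^k T_n\|$ concentrates around its mean in the sense that $\frac 1n(\log\|\bigwedge^k T_n\|-L_{k,n})\to 0$ almost surely. The mechanism in that paper is that positive push-forward entropy (guaranteed by the no-invariance hypothesis, via \cite[Lem.~2.5]{gorodetski2023nonstationary}) forces a law-of-large-numbers-type statement for the logarithmic norm growth; the compactness and moment hypotheses make all the relevant constants uniform along the sequence $(\mu_n)$, which is what allows the subadditive/almost-subadditive ergodic-type argument to go through in the non-stationary setting. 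I would cite the precise statement in that paper (their analogue of a non-stationary Furstenberg theorem with rate $o(n)$ fluctuations) and check that our hypotheses match theirs line by line.

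**The main obstacle** I anticipate is purely bookkeeping rather than conceptual: verifying that the exterior-power pushforwards $(\bigwedge^k)_*\mu_n$ genuinely satisfy the hypotheses of the $k=1$ case uniformly in $n$. Three things need care. First, the moment condition: from $\int\|A\|^\gamma\,d\mu(A)<C$ one gets $\int\|\bigwedge^k A\|^{\gamma/k}\,d((\bigwedge^k)_*\mu)(B)\le\int\|A\|^\gamma\,d\mu(A)<C$, so a uniform moment bound survives with exponent $\gamma/k$ — fine, but worth stating. Second, the "component equivalent to volume near the identity" hypothesis does \emph{not} transfer directly, since $(\bigwedge^k)_*\mu$ is supported on the subvariety $\bigwedge^k\SLpm(d,\R)\subset\SLpm(\bigwedge^k\R^d)$, which is exactly the situation already addressed inside the proof of Lemma \ref{lem:no_invariants_big_support}; so I would \emph{not} try to recover that hypothesis but instead directly use the conclusion of Lemma \ref{lem:no_invariants_big_support} (no invariant pair $\nu_1,\nu_2$), which is the only consequence of that hypothesis actually needed by \cite{gorodetski2023nonstationary}. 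Third, compactness of $\{(\bigwedge^k)_*\mu:\mu\in\mathcal K\}$ follows from continuity of $\mu\mapsto(\bigwedge^k)_*\mu$ and compactness of $\mathcal K$. Once these are in place, the result is immediate from the cited theorem, so I would keep the write-up short: reduce to $k=1$ via exterior powers, check the three hypotheses transfer (using Lemma \ref{lem:no_invariants_big_support} and its remark for the crucial one), and apply \cite{gorodetski2023nonstationary}.
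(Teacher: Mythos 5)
Your proposal is correct and follows essentially the same route as the paper: push the measures forward under $A\mapsto\bigwedge^k A$, verify the moment condition via $\|\bigwedge^k A\|\le\|A\|^k$, use Lemma \ref{lem:no_invariants_big_support} (rather than trying to transfer the ``component equivalent to volume'' hypothesis) to rule out invariant pairs of measures and subspaces, and then apply \cite[Thm.~1.1]{gorodetski2023nonstationary} on the exterior power, concluding via $\|\bigwedge^k T_n\|=\prod_{j=1}^k\sval_j(T_n)$. The only cosmetic difference is that you spell out the compactness and exponent bookkeeping slightly more explicitly than the paper does.
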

\begin{proof}
We apply \cite[Thm.~1.1]{gorodetski2023nonstationary} to the induced random 
product on the exterior power. 
Let $\mu_{k,n}$ be the push-forward of $\mu_n$ from $\SLpm(d,\R)$ to $\SLpm(\bigwedge^k\R^d)$. 
We point out that while \cite{gorodetski2023nonstationary} is stated for actions of $\SL(d,\R)$,
it applies verbatim to actions of $\SLpm(d,\R)$.

We check that the hypotheses of that theorem hold for the sequence of measures $\mu_{k,n}$. There are 
three hypotheses to check: The finite moment condition, the measures condition, and the subspaces 
condition. The latter two conditions are verified due to Lemma \ref{lem:no_invariants_big_support}. 
The moments condition is verified because $\|\bigwedge^k A\|\le \|A\|^{k}$.

Thus we may apply \cite[Thm.~1.1]{gorodetski2023nonstationary} to the measures $\mu_{k,n}$. 
The conclusion is now immediate from that theorem because 
$\|\bigwedge^k T_n\|=\prod_{j=1}^k \sval_j(T_n)$. 
\end{proof}

\subsection{Hypotheses on the noise and moments}
The results of \cite{gorodetski2023nonstationary} are phrased for matrices in $\SL(d,\R)$, 
whereas here we work with matrices that are almost surely in $\GL(d,\R)$. 
As such, we need to 
relate matrices in $\GL(d,\R)$ to matrices in $\SL^\pm(d,\R)$ and check that the appropriate moment 
estimate holds. Given a matrix $A\in \GL(d,\R)$, we write $\hat A$ for the matrix
$A/|\det A|^{1/d}$. We call this the \emph{normalization} of $A$.
We observe that it has the same projective action as $A$ on
any miniflag. 

\begin{definition}\label{defn:uniform_norm_moment}
Given a collection $\mc K$ of probability measures on $M_{d\times d}(\R)$ it
has finite \emph{uniform $\gamma$-norm moment} if
$$
\sup_{\mu\in \mc K}\int \|E\|^\gamma\,d\mu(E)<\infty.
$$
\end{definition}

\begin{definition}\label{defn:uniform_conorm_moment}
Given a collection $\mc K$ of probability measures on $M_{d\times d}(\R)$,
has finite \emph{uniform translated $\gamma$-conorm moments} if for all $M$,
$$
\sup_{\mu\in\mc K}\sup_{\|A\|\le M}\int \|(A+E)^{-1}\|^\gamma\,d\mu(E)<\infty
$$    
\end{definition}

The following lemma establishes that if $\mc K$ is a collection of probability 
measures on $\R^{d\times d}$,
satisfying norm and conorm moment conditions, then the normalizations of $A+E$ where $E$ is distributed
as $\mu$ satisfy moment conditions, possibly for some smaller exponent.
\begin{lem}\label{lem:specialization}
Suppose that $\mc K$ is a collection of probability measures on $M_{d\times d}(\R)$ with 
finite uniform $\gamma$-norm moment and finite uniform translated 
$\gamma$-conorm moments for some $\gamma>0$. 
Then there exists $\gamma'>0$ such that for all $M$,
\begin{align*}
&\sup_{\|A\|\le M}\sup_{\mu\in\mc K}\int \big\|\widehat{A+E}\big\|^{\gamma'}\,d\mu(E)
<\infty\text{; and}\\
&\sup_{\|A\|\le M}\sup_{\mu\in\mc K}\int \big\|(\widehat{A+E})^{-1}\big\|^{\gamma'}\,d\mu(E)<\infty.\\
\end{align*}
In particular, $\bigcup_{\|A\|\le M}(G_A)_*(\mc K)$ is tight, where $G_A(E)\coloneqq \widehat{A+E}$.
\end{lem}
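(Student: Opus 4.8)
The plan is to control the two moments $\|\widehat{A+E}\|^{\gamma'}$ and $\|(\widehat{A+E})^{-1}\|^{\gamma'}$ separately, using the norm moment to bound the numerator contributions and the conorm moment to bound $|\det(A+E)|^{-1}$ from above in an integrable way. First I would record the elementary algebraic inequalities. Since $\widehat{A+E} = (A+E)/|\det(A+E)|^{1/d}$, we have
\[
\|\widehat{A+E}\| = \frac{\|A+E\|}{|\det(A+E)|^{1/d}},
\qquad
\|(\widehat{A+E})^{-1}\| = |\det(A+E)|^{1/d}\,\|(A+E)^{-1}\|.
\]
For the second quantity, $|\det(A+E)| \le \|A+E\|^d$, so $|\det(A+E)|^{1/d} \le \|A+E\| \le M + \|E\|$; hence $\|(\widehat{A+E})^{-1}\| \le (M+\|E\|)\,\|(A+E)^{-1}\|$. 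For the first quantity, I would use that for any invertible $d\times d$ matrix $B$ one has $|\det B| \ge \|B^{-1}\|^{-d}$ (since the smallest singular value of $B$ is $\|B^{-1}\|^{-1}$ and the determinant is the product of singular values, each of which is at most $\|B\|$; more crudely $|\det B| = \prod s_i(B) \ge s_d(B)^d = \|B^{-1}\|^{-d}$). Therefore $|\det(A+E)|^{1/d} \ge \|(A+E)^{-1}\|^{-1}$, giving $\|\widehat{A+E}\| \le \|A+E\|\,\|(A+E)^{-1}\| \le (M+\|E\|)\,\|(A+E)^{-1}\|$.

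So both quantities are bounded by $(M+\|E\|)\,\|(A+E)^{-1}\|$. Now I would apply Hölder's inequality: choose $\gamma' > 0$ small enough that $2\gamma' \le \gamma$, and write, with conjugate exponents $p = q = 2$,
\[
\int (M+\|E\|)^{\gamma'}\,\|(A+E)^{-1}\|^{\gamma'}\,d\mu(E)
\le \left(\int (M+\|E\|)^{2\gamma'}\,d\mu(E)\right)^{1/2}
\left(\int \|(A+E)^{-1}\|^{2\gamma'}\,d\mu(E)\right)^{1/2}.
\]
The first factor is bounded uniformly over $\mu \in \mc K$ and $\|A\| \le M$ using $(M+\|E\|)^{2\gamma'} \le 2^{2\gamma'}(M^{2\gamma'} + \|E\|^{2\gamma'})$, the bound $\|E\|^{2\gamma'} \le 1 + \|E\|^\gamma$ (valid since $2\gamma' \le \gamma$), and the finite uniform $\gamma$-norm moment hypothesis. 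The second factor is bounded uniformly over $\mu \in \mc K$ and $\|A\| \le M$ by the finite uniform translated $\gamma$-conorm moments hypothesis, again after using $\|(A+E)^{-1}\|^{2\gamma'} \le 1 + \|(A+E)^{-1}\|^\gamma$. This yields both displayed bounds with the same $\gamma'$, and the suprema over $\|A\|\le M$ and $\mu\in\mc K$ are finite.

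For the final sentence, tightness of $\bigcup_{\|A\|\le M}(G_A)_*(\mc K)$ follows from the two moment bounds just established: a uniform bound on $\int \|\,\cdot\,\|^{\gamma'}$ implies, via Markov's inequality, that the mass outside a large norm-ball is uniformly small, and a uniform bound on $\int \|(\,\cdot\,)^{-1}\|^{\gamma'}$ keeps the mass uniformly away from the (closed) set of singular matrices; the intersection of a large norm-ball with a closed set of matrices bounded away from the non-invertible ones is compact in $\SLpm(d,\R)$ (noting that every $\widehat{A+E}$ lies in $\SLpm(d,\R)$ by construction), so the family is tight. The main obstacle is purely bookkeeping: making sure the chosen $\gamma'$ simultaneously works for both the norm and conorm Hölder splits and verifying the lower bound $|\det B|\ge \|B^{-1}\|^{-d}$ cleanly; there is no genuine analytic difficulty.
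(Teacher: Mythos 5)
Your proposal is correct and follows essentially the same route as the paper: bound $\|\widehat{A+E}\|$ and $\|(\widehat{A+E})^{-1}\|$ by products of powers of $\|A+E\|$ and $\|(A+E)^{-1}\|$ via singular-value bounds on the determinant, split the resulting integral with Cauchy--Schwarz into the two hypothesized moments, and deduce tightness from Markov's inequality. The only difference is cosmetic: you use the cruder bound $|\det B|\ge s_d(B)^d$ where the paper uses $|\det B|\ge s_1(B)s_d(B)^{d-1}$, so your admissible $\gamma'$ is slightly smaller, which is immaterial for the conclusion.
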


\begin{proof}
    Let $\gamma'=\gamma/(2-\frac 2d)$. For any matrix $B\in\GL(d,\R)$, we have the inequalities
    \begin{equation*}\label{eq:detbounds}
    \frac{\|B\|}{\|B^{-1}\|^{d-1}}=s_1(B)s_d(B)^{d-1}\le |\det B|
    \le s_1(B)^{d-1}s_d(B)=\frac{\|B\|^{d-1}}{\|B^{-1}\|}.
    \end{equation*}
    Since $\hat B=B/|\det B|^{1/d}$, substituting the bounds above, we deduce
    \begin{equation}\label{eq:hatbounds}
    \begin{split}
    \|\hat B\|&\le \|B\|^{(d-1)/d}\|B^{-1}\|^{(d-1)/d};\text{ and}\\
    \|\hat B^{-1}\|&\le \|B\|^{(d-1)/d}\|B^{-1}\|^{(d-1)/d}.
    \end{split}
    \end{equation}
    We now compute
    \begin{align*}
        &\sup_{\|A\|\le M}\sup_{\mu\in \mc K}\int \|\widehat{A+E}\|^{\gamma'}\,d\mu(E)\\
        &\le \sup_{\|A\|\le M}\sup_{\mu\in\mc K}
        \int \|A+E\|^{\gamma'(d-1)/d}\|(A+E)^{-1}\|^{\gamma'(d-1)/d}\,d\mu(E)\\
        &=\sup_{\|A\|\le M}\sup_{\mu\in\mc K}\int \|A+E\|^{\gamma/2}\|(A+E)^{-1}\|^{\gamma/2}\,d\mu(E)\\
        &\le\sup_{\|A\|\le M}\sup_{\mu\in\mc K}\left[\left(\int \|A+E\|^\gamma\,d\mu(E)\right)^{1/2}
        \left(\int \|(A+E)^{-1}\|^\gamma\,d\mu(E)\right)^{1/2}\right]\\
        &\le \left[\sup_{\mu\in\mc K}\left(\int (M+\|E\|)^\gamma\,d\mu(E)\right)^{1/2}\right]
        \left[\sup_{\|A\|\le M}\sup_{\mu\in\mc K}
        \left(\int \|(A+E)^{-1}\|^\gamma\,d\mu(E)\right)^{1/2}\right]\\  
        &\le \left[\sup_{\mu\in\mc K}
        \left(\int 2^\gamma(M^\gamma+\|E\|^\gamma)\,d\mu(E)\right)^{1/2}\right]
        \left[\sup_{\|A\|\le M}\sup_{\mu\in\mc K}
        \left(\int \|(A+E)^{-1}\|^\gamma\,d\mu(E)\right)^{1/2}\right].  
    \end{align*}
    By assumption, both terms are finite, giving the required bound. 
    Similarly, since by \eqref{eq:hatbounds} we have the same upper bounds for $\|\hat B\|$ and 
    $\|\hat B^{-1}\|$, we obtain the same estimate for $\sup_{\|A\|\le M}\sup_{\mu\in\mc K}\int\|
    (\widehat{A+E})^{-1}\|^{\gamma'}\,d\mu(E)$.
  
   To show that this implies that $\bigcup_{\|A\|\le M}(G_A)_*(\mc K)$ is tight, 
   let $L$ be defined by 
   $$
   L=\max\left(\sup_{\|A\|\le M}\sup_{\mu\in\mc K}\int \big\|\widehat{A+E}\big\|^{\gamma'}\,d\mu(E),
   \sup_{\|A\|\le M}\sup_{\mu\in\mc K}\int \big\|(\widehat{A+E})^{-1}\big\|^{\gamma'}\,d\mu(E)\right).
   $$
   In particular, for any $A$ with $\|A\|\le M$ and any $\mu\in\mc K$, we have the estimates
   $\int\|\widehat{A+E}\|^{\gamma'}\,d\mu(E)\le L$ and
   $\int\|(\widehat{A+E})^{-1}\|^{\gamma'}\,d\mu(E)\le L$. If $\epsilon>0$ is given, then
   the first estimate gives 
   $\mu(\{E\colon \|\widehat{A+E}\|>(2L/\epsilon)^{1/{\gamma'}}\})<\frac\epsilon2$.
   That is, 
   $$
   (G_A)_*\mu(\{B\colon \|B\|>(2L/\epsilon)^{1/\gamma'}\})<\tfrac\epsilon 2.
   $$ 
   Similarly, the second estimate gives 
   $$
   (G_A)_*\mu(\{B\colon \|B^{-1}\|>(2L/\epsilon)^{1/\gamma'}\})<\tfrac\epsilon 2.
   $$
   This establishes that for all $A$ with $\|A\|\le M$ and for all $\mu\in\mc K$,
   $$
   (G_A)_*\mu(\{B\colon (2L/\epsilon)^{-1/\gamma'}\le s_d(B)\le s_1(B)\le (2L/\epsilon)^{1/\gamma'}\})
   >1-\epsilon.
   $$
   Since $\{B\in \SL(d,\R)\colon a\le s_d(B)\le s_1(B)\le b\}$ is a compact set for all $a,b$,
   this establishes the required tightness of $\bigcup_{\|A\|\le M}(G_A)_*(\mc K)$.    
\end{proof}

\begin{lem}\label{lem:singleton}
    Let $\mc K$ be a set of absolutely continuous probability measures on $\GL(d,\R)$
    with uniformly bounded density (i.e. $\sup_{\mu\in\mc K}\sup_{A\in\GL(d,\R)}\frac{d\mu}{d\vol}(A)
    <\infty$)
    and
    uniformly bounded $\gamma$-norm moment. Then $\mc K$ satisfies the assumptions of
    Lemma \ref{lem:specialization}.
\end{lem}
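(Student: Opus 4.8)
The plan is to observe that the norm-moment half of the conclusion is automatic and to reduce the conorm-moment half to a single tail estimate. First, the uniform $\gamma$-norm moment required by Lemma~\ref{lem:specialization} is part of the hypothesis, and it descends to any smaller exponent: for $0<\gamma'\le\gamma$ one has $x^{\gamma'}\le 1+x^{\gamma}$ for all $x\ge 0$, so $\int\|E\|^{\gamma'}\,d\mu(E)\le 1+\sup_{\mu\in\mc K}\int\|E\|^{\gamma}\,d\mu(E)$ for every $\mu\in\mc K$. It therefore suffices to produce a single $\gamma'>0$ with
\[
\sup_{\mu\in\mc K}\ \sup_{\|A\|\le M}\int\|(A+E)^{-1}\|^{\gamma'}\,d\mu(E)<\infty\qquad\text{for every }M.
\]
Since $\|(A+E)^{-1}\|=s_d(A+E)^{-1}$, the layer-cake formula reduces this to a tail bound $\mu(\{E:s_d(A+E)<s\})\le C_M\,s^{\beta}$ valid for all $0<s\le 1$, with $\beta>0$ and $C_M$ independent of $\mu\in\mc K$ and of $A$ with $\|A\|\le M$; any $\gamma'<\beta$ will then do.

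To prove this tail bound I would write $D=\sup_{\mu\in\mc K}\|d\mu/d\vol\|_\infty$ and $Q=\sup_{\mu\in\mc K}\int\|E\|^{\gamma}\,d\mu(E)$, both finite by hypothesis, fix $M\ge 1$, $\mu\in\mc K$, $A$ with $\|A\|\le M$, $0<s\le 1$, and a cutoff $\rho\ge M$, and split according to whether $\|E\|\le\rho$ or $\|E\|>\rho$. On $\{\|E\|>\rho\}$, Markov's inequality bounds the mass by $Q\rho^{-\gamma}$. On $\{\|E\|\le\rho\}$ I would pass to Lebesgue measure using the uniform density bound: writing $B=A+E$ (so $\|B\|\le M+\rho\le 2\rho$),
\[
\mu\big(\{\|E\|\le\rho,\ s_d(A+E)<s\}\big)\le D\,\vol\big(\{B:\|B\|\le 2\rho,\ s_d(B)<s\}\big).
\]
Granting the geometric estimate below, the right-hand side is at most $c_d\,\rho^{d^2-1}s$, so $\mu(\{s_d(A+E)<s\})\le c_d D\,\rho^{d^2-1}s+Q\rho^{-\gamma}$. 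Optimizing over $\rho$ (take $\rho\asymp s^{-1/(d^2-1+\gamma)}$, which exceeds $M$ once $s$ is small, and simply bound the mass by $1$ for the finitely-controlled remaining range of $s$) gives $\mu(\{s_d(A+E)<s\})\le C_M\,s^{\beta}$ with $\beta=\gamma/(d^2-1+\gamma)$ and $C_M$ depending only on $d,D,Q,M$. Taking $\gamma'=\beta/2$ then finishes the argument, together with the first-paragraph remark about the norm moment; finally, $\bigcup_{\|A\|\le M}(G_A)_*(\mc K)$ is tight by the last part of Lemma~\ref{lem:specialization}.

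The one genuinely non-routine ingredient, and the main obstacle, is the geometric estimate
\[
\vol\big(\{B\in M_{d\times d}(\R):\|B\|\le R,\ s_d(B)<s\}\big)\le c_d\,R^{d^2-1}\,s\qquad(R\ge 1,\ 0<s\le 1);
\]
its growth in $R$ is essential, because the cutoff $\rho$ must be allowed to grow with $1/s$. By Eckart--Young, $s_d(B)$ equals the operator-norm distance from $B$ to the determinant hypersurface $\Sigma=\{\det=0\}$, so the set in question is contained in the $s$-tubular neighbourhood of the compact piece $\Sigma\cap\{\|B\|\le R+1\}$; the estimate then follows from the standard facts that a degree-$d$ real algebraic hypersurface has $(d^2-1)$-dimensional volume $O_d(R^{d^2-1})$ inside a ball of radius $R$, and that the $s$-tube of such a set has volume $O(s)$ times that volume. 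Alternatively one can note that $\|B\|\le R$ and $s_d(B)<s$ force $|\det B|=\prod_i s_i(B)< sR^{d-1}$ and invoke a standard sublevel-set estimate for the degree-$d$ polynomial $\det$, which gives the same conclusion with a possibly smaller positive power of $s$ — and that is all that is needed. Everything else is bookkeeping with finite constants.
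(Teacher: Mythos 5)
Your proposal is correct, and the overall strategy coincides with the paper's: both arguments combine (i) the uniform density bound to pass from $\mu$ to Lebesgue measure on the region where $A+E$ has bounded norm, (ii) Markov's inequality with the $\gamma$-norm moment to control the large-norm tail, and (iii) a volume estimate of the form $\vol\{B\colon \|B\|\le R,\ s_d(B)<s\}\lesssim R^{d^2-1}s$, balanced against each other to produce an exponent of the shape $\gamma/(d^2-1+\gamma)$. The differences are organizational and in the justification of (iii). You first prove a clean tail bound $\mu\{s_d(A+E)<s\}\le C_M s^\beta$ by optimizing over a single cutoff $\rho$ and then apply the layer-cake formula; the paper instead performs a dyadic double decomposition (in $\|A+E\|$ and in $s_d(A+E)$) directly on the moment integral and sums the resulting geometric series. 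Your version is arguably cleaner to read and isolates the reusable tail estimate. For the volume estimate itself, the paper gives a two-line elementary argument (if $s_d(A+B)<\alpha$ then some column of $A+B$ lies within $\alpha\sqrt d$ of the span of the others, so the set sits in a $d$-fold union of slabs), whereas you invoke Eckart--Young together with tube-volume bounds for real algebraic hypersurfaces, or alternatively a sublevel-set estimate for $\det$; these are correct and suffice (as you note, even a smaller power of $s$ and a larger power of $R$ would survive the optimization), but they import machinery the paper avoids. If you want the argument self-contained, substitute the column/slab computation for the algebraic-geometry citation.
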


\begin{proof}
    It suffices to check that $\mc K$ has a finite uniform translated $\gamma'$-conorm moment 
    for some $\gamma'>0$. In the course of the proof, the letter $C$ will denote various constants 
    that do not depend on $A$, and may vary from line to line. We recall that $s_d(A)=\|A^{-1}\|^{-1}$.
    We make use of the fact that there is a constant $C$
    such that for any matrix $A$,
    \begin{equation}\label{eq:Leb_bound}
    \text{Leb}(\{B\colon \|B\|\le 2^n,\ s_d(A+B)<\alpha\})\le 
    C2^{(d^2-1)n}\alpha\text{ for all $\alpha$}.
    \end{equation}
    To see this, notice that if $s_d(A+B)<\alpha$, then there is a unit vector $v$ such that 
    $\|(A+B)v\|<\alpha$. If $v$ is such a vector and 
    $|v_i|\ge \frac 1{\sqrt d}$, then the $i$th column of $A+B$
    is no further than $\alpha\sqrt d$ from the span of the other columns, so that 
    $\{B\colon \|B\|\le 2^n,\ s_d(A+B)<\alpha\}$ is contained in the $d$-fold union
    of sets of volume $O((2^n)^{d^2-1}\alpha)$.
   
    Since $\mc K$ has a finite $\gamma$-norm moment, we observe from Markov's inequality, there
    exists a $C$ such that for all $\mu\in \mc K$,
\begin{equation}
    \mu(\{B\colon 2^n\le \|B\|<2^{n+1}\})\le C/2^{-\gamma n},
\end{equation}
(where $C$ is the $\gamma$-norm).

Let $s\in\N$ and $\mu\in\mc K$ be fixed. Let $\delta>0$ be a constant to be chosen 
later (independently of $s$ and $\mu$). 
If $\|A\|\le 2^s$, we now estimate
\begin{align*}
    &\int \|(A+E)^{-1}\|^\delta\,d\mu(E)\le\circled{1}+\circled{2}+\circled{3}\text{ where}\\
    \circled{1}&=C\int_{\|A+E\|\le 2^{s+1}}\|(A+E)^{-1}\|^{\delta}\,d\text{Leb}(E);\\
    \circled{2}&=\mu\{E\colon s_d(A+E)\ge 1\};\text{ and}\\
    \circled{3}&=\sum_{n=s+1}^\infty\sum_{m=0}^\infty 2^{\delta (m+1)}
    \mu(\{E\colon 2^n\le \|A+E\|<2^{n+1},
    2^{-(m+1)}\le s_d(A+E)<2^{-m}\}),
\end{align*}
the constant $C$ in the first line being the uniform bound on the density. 
Clearly, $\circled{2}\le 1$. For $\circled{1}$, we estimate
$$
\circled{1}\le C\int_{\|E\|\le 2^{s+2}}\|E^{-1}\|^\delta\,d\text{Leb}(E).
$$
For $\delta<1$, this can be seen to be finite using \eqref{eq:Leb_bound}.
For $\circled{3}$, since $\|A\|<2^{n-1}$ for the $n$'s we consider, we observe
\begin{align*}
    &\mu(\{E\colon 2^n\le \|A+E\|<2^{n+1}, 2^{-(m+1)}\le s_d(A+E)<2^{-m}\})\\
    &\le \min\Big(\mu(\{E\colon 2^n\le \|A+E\|<2^{n+1}\}),C\,\text{Leb}(\{E\colon\|A+E\|<2^{n+1},
    s_d(A+E)<2^{-m}\})\Big)\\
    &\le C\min\Big(\mu(\{E\colon 2^{n-1}\le \|E\|\}),
    \text{Leb}(\{E\colon \|E\|<2^{n+2}, s_d(E)<2^{-m}\})\Big)\\
    &\le C\min(2^{-n\gamma},2^{n(d^2-1)-m}),
\end{align*}
where the same constant $C$ works for all $\mu\in\mc K$.
Hence we need to estimate
$$
\sum_{n=s}^\infty\sum_{m=0}^\infty 2^{\delta m}\min(2^{-n\gamma},2^{n(d^2-1)-m}).
$$
Provided $\delta<1$, for fixed $n$, the summands increase geometrically in $m$ with 
common ratio $2^\delta$ up to $m=n(d^2-1+\gamma)$ (at which point the two terms in the minimum agree)
and then decrease geometrically in $m$ with common ratio 2. 
Accordingly, the sum is bounded above by a constant multiple of 
$$
\sum_{n=s}^\infty 2^{\delta n(d^2-1+\gamma)}2^{-n\gamma}.
$$
Letting $\delta$ be $\frac \gamma 2/(d^2-1+\gamma)$, for example, ensures that the sum is finite. 
Hence we have shown that $\mc K$ has finite translated $\delta$-conorm moment. 
\end{proof}

\subsection{Proof of the Main Theorem. }
We first state our main theorem in a more general form and then state a more 
precise quantitative version of it. 
\begin{thm}\label{thm:main_thm_abstract}
For any dimension $d$ and any $M>0$, the following holds. Suppose that 
$(\mu_n)_{n\in \N}$ is a sequence of distributions on $M_{d\times d}(\R)$ satisfying 
\begin{enumerate}
    \item The collection $\mc K=\{\mu_n\}$ has a uniform $\gamma$-norm moment and uniform translated
    $\gamma$-conorm moment for some $\gamma>0$. 
   \item \label{it:eta_entropy}
    For any matrix $A$ with $\|A\|\le M$, $(G_A)_*\mu_n$ has a uniform fiberwise 
    push-forward entropy (Def.~\ref{defn:uniform_fiberwise_pushforward_entropy}) 
    of at least $\eta$ on the miniflags
    of core dimension $k$ for $0\le k\le d-2$, where $G_A(E)=A+E$.
\end{enumerate}
Then for any sequence of $d\times d$ matrices $(A_n)_{n\in \N}$ such that
$\|A_n\|\le M$ for each $n$, define a random product by
\[
B^{n}\coloneqq (A_n+E_n)(A_{n-1}+E_{n-1})\cdots(A_1+E_1),
\]
where the $(E_i)$ are independent and the distribution of $E_i$ is given by $\mu_i$. 
Then we have
\begin{equation}\label{eqn:expectation_grows_general}
\E\left[\log \sval_k(B^n)-\log \sval_{k+1}(B^n)\right]\ge n\eta-C_d,
\end{equation}
where $C_d$ is as in the statement of Proposition \ref{prop:gapest}; and almost surely
\begin{equation}\label{eqn:almost_sure_est}
\liminf_{n\to \infty} \tfrac 1n\big(\log \sval_k(B^n)-\log \sval_{k+1}(B^n)\big)\ge \eta.
\end{equation}
\end{thm}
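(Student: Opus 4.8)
Both displayed inequalities should follow by assembling the results proved above, the only real work being bookkeeping. The plan is as follows. Write $\tilde\mu_j=(G_{A_j})_*\mu_j$ for the law of $A_j+E_j$, so that $B^n$ has law $\tilde\mu^{*n}:=\tilde\mu_n*\cdots*\tilde\mu_1$; each $\tilde\mu_j$, and hence $\tilde\mu^{*n}$, is supported on $\GL(d,\R)$ since $E_j$ is absolutely continuous.

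For the bound in expectation, I would first use hypothesis \ref{it:eta_entropy} (with $A=A_j$) to see that each $\tilde\mu_j$ has uniform fiberwise push-forward entropy at least $\eta$ on the bundle $\PF(k-1,k,k+1)\to\PF(k-1,k+1)$ of miniflags of core dimension $k-1$. Iterating Proposition \ref{prop:additivity_of_pushforward_entropy}, at each stage splitting off the newest factor as $\tilde\mu^{*n}=\tilde\mu_n*(\tilde\mu_{n-1}*\cdots*\tilde\mu_1)$ — it being that factor which must carry uniform fiberwise entropy — gives $\EPIA(\mc F_0,\tilde\mu^{*n})\ge n\eta$ for every fiber $\mc F_0$. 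Feeding this into Proposition \ref{prop:miniflag_distortion_implies_singular_value_gaps} applied to $\tilde\mu^{*n}$ then yields
\[
\E\!\left[\log\sval_k(B^n)-\log\sval_{k+1}(B^n)\right]\ \ge\ \int_{\PF(k-1,k+1)}\EPIA(\mc F,\tilde\mu^{*n})\,dm(\mc F)-C_d\ \ge\ n\eta-C_d,
\]
which is \eqref{eqn:expectation_grows_general}.

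For the almost sure bound, the idea is to transfer the $o(n)$ fluctuation statement of Proposition \ref{prop:growth_in_expectations_implies_almost_sure} to the two-sided gap. Write $\Lambda_j(B)=\log\|\textstyle\bigwedge^j B\|$, so that $\log\sval_k(B)-\log\sval_{k+1}(B)=2\Lambda_k(B)-\Lambda_{k-1}(B)-\Lambda_{k+1}(B)$ (with $\Lambda_0\equiv0$; all indices are legitimate since $1\le k\le d-1$). Since Proposition \ref{prop:growth_in_expectations_implies_almost_sure} is stated for products in $\SLpm(d,\R)$, I would pass to the normalizations: let $\nu_j$ be the law of $\widehat{A_j+E_j}$ and $T_n=\widehat{A_n+E_n}\cdots\widehat{A_1+E_1}$, which equals $\widehat{B^n}$ since $\widehat{XY}=\widehat X\,\widehat Y$ and has law $\nu_n*\cdots*\nu_1$. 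As $\Lambda_j(\widehat{B^n})=\Lambda_j(B^n)-\tfrac jd\log|\det B^n|$ and the coefficients $(2,-1,-1)$ on the indices $(k,k-1,k+1)$ satisfy $2k-(k-1)-(k+1)=0$, the determinant corrections cancel, so $2\Lambda_k(\widehat{B^n})-\Lambda_{k-1}(\widehat{B^n})-\Lambda_{k+1}(\widehat{B^n})=\log\sval_k(B^n)-\log\sval_{k+1}(B^n)$, and likewise for the expectations $\hat L_{j,n}:=\E[\Lambda_j(\widehat{B^n})]$, whence $2\hat L_{k,n}-\hat L_{k-1,n}-\hat L_{k+1,n}\ge n\eta-C_d$ by the first part. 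To invoke Proposition \ref{prop:growth_in_expectations_implies_almost_sure} I would check its hypotheses for $\{\nu_j\}$: tightness and a uniform norm moment come from Lemma \ref{lem:specialization}, whose hypotheses are precisely the moment hypothesis (1) of the theorem; and non-degeneracy near $\Id$ — up to left translation by the bounded matrices $\widehat{A_j}$, which by the remark after Lemma \ref{lem:no_invariants_big_support} suffices — comes from absolute continuity of $\mu_j$. The proposition then gives $\tfrac1n(\Lambda_j(\widehat{B^n})-\hat L_{j,n})\to0$ a.s.\ for $j=k-1,k,k+1$; taking the $(2,-1,-1)$ combination and using the expectation bound just recalled, $\liminf_n\tfrac1n(\log\sval_k(B^n)-\log\sval_{k+1}(B^n))\ge\eta$, which is \eqref{eqn:almost_sure_est}.

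I do not expect a genuinely hard step; the points to get right are the bookkeeping ones above — the correct direction of the iteration in Proposition \ref{prop:additivity_of_pushforward_entropy}, the rewriting of the two-sided gap as the combination $2\Lambda_k-\Lambda_{k-1}-\Lambda_{k+1}$ of exterior-power norms, and, after normalizing to $\SLpm(d,\R)$, the cancellation of the determinant terms. The one place where some care is genuinely needed is the last: confirming that the normalized laws $\nu_j$ form a compact family and retain enough non-degeneracy near the identity (as translates of measures with a volume component at $\Id$) to run Proposition \ref{prop:growth_in_expectations_implies_almost_sure} — which is exactly what Lemma \ref{lem:specialization} and the remark after Lemma \ref{lem:no_invariants_big_support} are designed to supply.
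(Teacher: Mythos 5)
Your proposal is correct and follows essentially the same route as the paper: hypothesis (2) plus iterated application of Proposition \ref{prop:additivity_of_pushforward_entropy} gives $\EPIA(\mc F_0,\tilde\mu^{*n})\ge n\eta$, Proposition \ref{prop:miniflag_distortion_implies_singular_value_gaps} converts this into \eqref{eqn:expectation_grows_general}, and Proposition \ref{prop:growth_in_expectations_implies_almost_sure} (with Lemma \ref{lem:specialization} supplying its moment/tightness hypotheses) upgrades the expectation bound to the almost sure statement. Your second paragraph actually spells out more carefully than the paper does the passage to the normalized product and the cancellation of the determinant terms in the combination $2\Lambda_k-\Lambda_{k-1}-\Lambda_{k+1}$, which the paper leaves implicit.
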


\begin{proof}
It suffices to show the estimate \eqref{eqn:expectation_grows_general}; equation  
\eqref{eqn:almost_sure_est} then follows by Proposition 
\ref{prop:growth_in_expectations_implies_almost_sure}, 
whose hypotheses are satisfied due to Lemma \ref{lem:specialization} and the assumption (1) above.

Fix some $0\le k-1\le d-2$. By assumption \eqref{it:eta_entropy},
for all $n$, the distribution of $\nu_n=(G_{A_n})_*\mu_n$
has fiber-averaged push-forward entropy at least
$\eta$ on the miniflags of core dimension $k-1$. Then by Proposition 
\ref{prop:additivity_of_pushforward_entropy}, it follows 
that it follows that the distribution of $B^n$ satisfies that 
\[
\EPIA(\mc F_0,\nu_n*\cdots *\nu_1)\ge n\eta. 
\]
From Proposition \ref{prop:miniflag_distortion_implies_singular_value_gaps}, the estimate 
\eqref{eqn:expectation_grows_general} is now immediate.
\end{proof}

We can now deduce the main consequence of this, which is the theorem stated in the introduction. 

\begin{proof}[Proof of Theorem~\ref{thm:main_thm_simple}.]
We apply Theorem \ref{thm:main_thm_abstract} with $(\mu_n)$ being the constant sequence
$(\mu)$. Recall that $\mu$ is absolutely continuous with continuous density $\phi$
satisfying $\phi(E)\le C/\|E\|^{d^2+1}$. Let $K=\max\phi$.
For $0<\gamma<1$, we estimate the $\gamma$-norm moment of $\mu$ by
\begin{align*}
    \int\|E\|^\gamma\,d\mu(E)&=\int \phi(E)\|E\|^\gamma\,d\text{Leb}(E)\\
    &\le A\int_0^\infty \min(K,C{r^{-(d^2+1)}})r^\gamma r^{d^2-1}\,dr\\
    &\le AK\int_0^1 r^{d^2+\gamma-1}\,dr+AC\int_1^\infty r^{\gamma-2}\,dr<\infty.
\end{align*}
Hence Lemmas \ref{lem:singleton} and \ref{lem:specialization} imply
hypothesis (1) of the theorem is satisfied for the 
family $\mc K=\{\mu\}$.
Hypothesis \eqref{it:eta_entropy} of the theorem (with $\eta=\epsilon^2C_\phi/M^2$) 
is satisfied by
Proposition \ref{prop:fiberwise_entropy_invertible_case}.
\end{proof}

\begin{rem}
It is also possible to take the noise $\mu_{\epsilon}$ to be non-stationary but drawn from 
a family of distributions $\phi$ that is precompact.
\end{rem}

\appendix
\section{Upper bounds}\label{app:upper_bds}

For $L>0$, let $\mc M_L$ be the set of probability measures on $M_{d\times d}(\R)$ with the properties:
\begin{itemize}
    \item $\mu\{E\colon \|E\|\le L\}=1$;
    \item $\int E_{ij}\,d\mu(E)=0$ for all $i,j$.
    \item $\int E_{ij}E_{kl}\,d\mu(E)=\delta_{ik}\delta_{jl}$ for all $1\le i,j,k,l\le d$.
\end{itemize}

\begin{lem}\label{lem:conj}
Let $\SO(d)$ act on $M_{d\times d}(\R)$ by conjugation, denoted by $O(E)=OEO^{-1}$.
Then $O_*(\mc M_L)=\mc M_L$ for each $O\in \SO(d)$. 
\end{lem}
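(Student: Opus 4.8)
The plan is to verify directly that conjugation by a fixed $O\in\SO(d)$ preserves each of the three defining properties of $\mc M_L$, and then to observe that this already yields the asserted equality of sets. Note first that the map $\Phi_O\colon E\mapsto OEO^{-1}=OEO^T$ is a linear bijection of $M_{d\times d}(\R)$ with inverse $\Phi_{O^{-1}}$, and that $O_*\mu$ is by definition the law of $\Phi_O(E)$ when $E\sim\mu$. Since $O_*$ is a bijection on the space of probability measures with inverse $(O^{-1})_*$, it suffices to prove the inclusion $O_*\mu\in\mc M_L$ whenever $\mu\in\mc M_L$; applying this to $O^{-1}$ in place of $O$ then gives both inclusions.

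So suppose $E\sim\mu\in\mc M_L$ and set $E'=OEO^T$. The support condition is immediate: $\|E'\|=\|OEO^T\|=\|E\|\le L$ almost surely, because conjugation by an orthogonal matrix preserves the operator norm. The mean-zero condition follows from linearity of expectation, $\E[E']=O\,\E[E]\,O^T=0$. For the covariance, write $E'_{ij}=\sum_{a,b}O_{ia}O_{jb}E_{ab}$, so that
\[
\E[E'_{ij}E'_{kl}]=\sum_{a,b,c,e}O_{ia}O_{jb}O_{kc}O_{le}\,\E[E_{ab}E_{ce}]
=\sum_{a,b,c,e}O_{ia}O_{jb}O_{kc}O_{le}\,\delta_{ac}\delta_{be}
=\Big(\sum_a O_{ia}O_{ka}\Big)\Big(\sum_b O_{jb}O_{lb}\Big)
=(OO^T)_{ik}(OO^T)_{jl}=\delta_{ik}\delta_{jl},
\]
using $OO^T=\Id$. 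Hence $E'$ satisfies all three conditions, so $O_*\mu\in\mc M_L$, and the lemma follows.

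There is no genuine obstacle here; the only point worth flagging is the conceptual content behind the index computation. The identity $\E[E_{ij}E_{kl}]=\delta_{ik}\delta_{jl}$ says exactly that the covariance operator of $E$, viewed as a random element of $(M_{d\times d}(\R),\langle\cdot,\cdot\rangle_{\mathrm{HS}})$ with $\langle A,B\rangle_{\mathrm{HS}}=\Tr(A^TB)$, is the identity; and $\Phi_O$ is orthogonal for this inner product, since $\langle OAO^T,OBO^T\rangle_{\mathrm{HS}}=\Tr(OA^TO^T OBO^T)=\Tr(A^TB)=\langle A,B\rangle_{\mathrm{HS}}$ by cyclicity of the trace. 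The displayed computation is just the concrete manifestation of this invariance, and together with orthogonal invariance of the operator norm it makes all three clauses of the definition of $\mc M_L$ manifestly conjugation-invariant.
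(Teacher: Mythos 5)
Your proof is correct and follows essentially the same route as the paper: verify each of the three defining conditions directly (orthogonal invariance of the operator norm, linearity for the mean, and the index computation using $OO^T=\Id$ for the covariance), then use the same argument applied to $O^{-1}$ to upgrade the inclusion to an equality. The closing remark about the covariance operator being the identity for the Hilbert--Schmidt inner product is a nice conceptual gloss but does not change the substance of the argument.
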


\begin{proof}
    Let $m\in\mc M_L$. For $O\in \SO(d)$, since $\|E\|=\|OEO^{-1}\|$, $O_*\mu$ satisfies the first condition. 
    The measure $O_*\mu$ satisfies the second condition follows by linearity. 
    For the third condition, we have
    \begin{align*}
        \int (OEO^{-1})_{ab}(OEO^{-1})_{cd}\,d\mu(E)&=\sum_{i,j,k,l}O_{ai}O^{-1}_{jb}O_{ck}O^{-1}_{ld}\int E_{ij}E_{kl}\,d\mu(E)\\
        &=\sum_{i,j,k,l}O_{ai}O^{-1}_{jb}O_{ck}O^{-1}_{ld}\delta_{ik}\delta_{jl}\\
        &=\sum_{i,j}O_{ai}O_{ci}O^{-1}_{jb}O^{-1}_{jd}\\
        &=(OO^T)_{ac}(OO^T)_{db}\\
        &=\delta_{ac}\delta_{db},
    \end{align*}
    as required. Hence $O_*\mc M_L\subseteq M_L$. Since $O^{-1}_*\mc M_L\subseteq \mc M_L$ also, it follows that
    $O_*\mc M_L=\mc M_L$ as required.
\end{proof}

\begin{lem}\label{lem:second_order_taylor_expansion}
Let $A>0$ be fixed. Then there exists $\epsilon_0>0$ such that if $Y$, $X_1$, $X_2$ 
are random variables such that $|X_1|\le A$, $|X_2|\le A$ and 
$|Y-(1+\epsilon X_1+\epsilon^2X_2)|\le A\epsilon^3$
almost surely, then for all $0<\epsilon<\epsilon_0$,
\[
\E\,\log Y=\epsilon \mathbb{E}X_1 + \epsilon^2 \big(\E X_2-\tfrac12\E X_1^2\big)+O(\epsilon^3).
\]
\end{lem}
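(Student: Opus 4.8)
The plan is to rewrite everything in terms of the single random variable $Z=Y-1$ and reduce to the elementary Taylor expansion of $\log(1+Z)$ at $Z=0$, being careful that every error term is controlled by a deterministic almost-sure bound so that expectations pass through freely and the implied constants depend only on $A$.

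First I would set $R=Y-(1+\epsilon X_1+\epsilon^2 X_2)$, so that $|R|\le A\epsilon^3$ and $Z:=Y-1=\epsilon X_1+\epsilon^2 X_2+R$. Using $|X_1|,|X_2|\le A$ and $\epsilon\le 1$ gives $|Z|\le 3A\epsilon$ almost surely. Choosing $\epsilon_0$ small enough that $3A\epsilon_0<\tfrac12$ guarantees $|Z|\le\tfrac12$, hence $Y\ge\tfrac12>0$ and $\log Y=\log(1+Z)$ is well defined. On $|Z|\le\tfrac12$ one has $|\log(1+Z)-Z+\tfrac12 Z^2|\le C|Z|^3$ for a universal constant $C$ (bound the tail of the alternating series). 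Since $|Z|^3\le 27A^3\epsilon^3$, taking expectations yields
\[
\E\log Y=\E Z-\tfrac12\E Z^2+O(\epsilon^3),
\]
with the implied constant depending only on $A$.

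Next I would expand the two moments. For the first, $\E Z=\epsilon\,\E X_1+\epsilon^2\,\E X_2+\E R$ and $|\E R|\le A\epsilon^3$, so $\E Z=\epsilon\,\E X_1+\epsilon^2\,\E X_2+O(\epsilon^3)$. For the second, expand $Z^2=\epsilon^2 X_1^2+2\epsilon^3 X_1X_2+\epsilon^4 X_2^2+2\epsilon X_1R+2\epsilon^2 X_2R+R^2$; each term other than $\epsilon^2 X_1^2$ is bounded pointwise by a constant depending only on $A$ times $\epsilon^3$, so $\E Z^2=\epsilon^2\,\E X_1^2+O(\epsilon^3)$. Substituting both into the display above gives exactly
\[
\E\log Y=\epsilon\,\E X_1+\epsilon^2\big(\E X_2-\tfrac12\E X_1^2\big)+O(\epsilon^3).
\]

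There is no genuine obstacle here; the only point requiring care is that the $O(\epsilon^3)$ must be uniform over all admissible triples $(Y,X_1,X_2)$, which is automatic since every estimate used is a deterministic almost-sure bound phrased in terms of $A$ alone. If desired, $\epsilon_0$ and the implied constant could be made fully explicit, but that bookkeeping is not needed for the application.
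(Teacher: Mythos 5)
Your proof is correct and follows essentially the same route as the paper: write $Y=1+Z$ with $Z=\epsilon X_1+\epsilon^2X_2+R$, expand $\log(1+Z)$ to second order with a cubic remainder (the paper uses the Lagrange form, you use the series tail bound — an immaterial difference), and truncate the moments of $Z$ and $Z^2$ at order $\epsilon^2$. You simply supply more of the bookkeeping (the choice of $\epsilon_0$, the pointwise bounds on the cross terms) that the paper compresses into one sentence.
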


\begin{proof}
We will use the Taylor expansion with remainder. Let $f(x)=\log(1+x)$
and write $Y=1+Z$ with $Z=\epsilon X_1+\epsilon^2X_2+R$, where $|R|\le A\epsilon^3$ a.s. Then
\[
\log Y=Z-\tfrac 12Z^2+\tfrac 16f'''(z)Z^3
\]
where $z$ is between $0$ and $Z$. 
By compactness of the support of the random variables we are considering, 
and truncating the expansions of $Z$ and $Z^2$ at the $\epsilon^2$ terms,
the error term is of size $O_A(\epsilon^3)$, and the stated estimate follows. 
\end{proof}

\begin{lem}\label{lem:Taylor}
    Let $1\le k\le d$ and let $M_{d\times d}$ act on $\bigwedge^k\R^d$ by $A(v_1\wedge\ldots\wedge v_k)
    =Av_1\wedge \ldots \wedge Av_k$. Then for all $\mu\in\mc M_L$, 
    $$
    \int\log\left\|(I+\epsilon E)(e_1\wedge\ldots\wedge e_k)\right\|\,d\mu(E)=\frac{k(d-k-1)\epsilon^2}{2}+O(\epsilon^3)
    $$ 
\end{lem}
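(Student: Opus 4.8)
The plan is to express the squared norm of the wedge product as a Gram determinant and expand it to second order in $\epsilon$. Write $v_i=(I+\epsilon E)e_i=e_i+\epsilon Ee_i$ for $1\le i\le k$, so that $(I+\epsilon E)(e_1\wedge\cdots\wedge e_k)=v_1\wedge\cdots\wedge v_k$ and $\|v_1\wedge\cdots\wedge v_k\|^2=\det G$, where $G$ is the $k\times k$ Gram matrix with entries $G_{ij}=\langle v_i,v_j\rangle$. Using $Ee_j=\sum_{m}E_{mj}e_m$ one has $\langle e_i,Ee_j\rangle=E_{ij}$, $\langle Ee_i,e_j\rangle=E_{ji}$ and $\langle Ee_i,Ee_j\rangle=\sum_{m=1}^d E_{mi}E_{mj}$, so that for $1\le i,j\le k$,
\[
G_{ij}=\delta_{ij}+\epsilon\,S_{ij}+\epsilon^2\,T_{ij},\qquad
S_{ij}:=E_{ij}+E_{ji},\qquad T_{ij}:=\sum_{m=1}^d E_{mi}E_{mj}.
\]
Since $\int\log\|(I+\epsilon E)(e_1\wedge\cdots\wedge e_k)\|\,d\mu=\tfrac12\int\log\det G\,d\mu$, it suffices to estimate $\E\log\det G$.

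Next I would expand the determinant. Using $\det(I+M)=\sum_{j\ge0}e_j(M)$, where $e_j(M)$ is the sum of the principal $j\times j$ minors of $M$ (with $e_0=1$, $e_1(M)=\Tr M$, $e_2(M)=\tfrac12((\Tr M)^2-\Tr(M^2))$), and setting $M=\epsilon S+\epsilon^2 T$, collecting powers of $\epsilon$ gives
\[
\det G=1+\epsilon X_1+\epsilon^2 X_2+R,\qquad X_1=\Tr S,\qquad X_2=\Tr T+\tfrac12\big((\Tr S)^2-\Tr(S^2)\big),
\]
where $R$ gathers all terms of order $\epsilon^3$ and higher. The hypothesis $\|E\|\le L$ a.s.\ forces $|S_{ij}|$, $|T_{ij}|$ — and hence $|X_1|$, $|X_2|$, and the coefficients of $R$ — to be bounded by a constant depending only on $L$ and $d$, so $|R|\le A\epsilon^3$ a.s.\ for $0<\epsilon\le1$, uniformly over $\mu\in\mc M_L$. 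Applying Lemma~\ref{lem:second_order_taylor_expansion} with $Y=\det G$ then yields, for all $\epsilon$ below a threshold depending only on $L$ and $d$,
\[
\E\log\det G=\epsilon\,\E X_1+\epsilon^2\big(\E X_2-\tfrac12\E X_1^2\big)+O(\epsilon^3)
=\epsilon\,\E X_1+\epsilon^2\big(\E\Tr T-\tfrac12\E\Tr(S^2)\big)+O(\epsilon^3),
\]
the last equality because the $\tfrac12(\Tr S)^2$ inside $X_2$ cancels $\tfrac12X_1^2$.

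Finally I would evaluate these expectations from the moment conditions defining $\mc M_L$. Mean-zero entries give $\E\Tr S=2\sum_{i=1}^k\E E_{ii}=0$, which also kills the $\epsilon^1$ term. The covariance identity $\E[E_{ij}E_{kl}]=\delta_{ik}\delta_{jl}$ gives $\E T_{ii}=\sum_{m=1}^d\E E_{mi}^2=d$, so $\E\Tr T=kd$; and, since $S$ is symmetric, $\E\Tr(S^2)=\sum_{i,j=1}^k\E(E_{ij}+E_{ji})^2$, where $\E(E_{ij}+E_{ji})^2=2$ for $i\ne j$ and $=4$ for $i=j$, so $\E\Tr(S^2)=2k(k-1)+4k=2k(k+1)$. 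Substituting,
\[
\E\log\det G=\epsilon^2\big(kd-k(k+1)\big)+O(\epsilon^3)=k(d-k-1)\epsilon^2+O(\epsilon^3),
\]
and dividing by $2$ gives the claimed expansion. The only delicate point is that the $O(\epsilon^3)$ error be uniform over $\mu\in\mc M_L$, which is precisely what the a.s.\ bound $\|E\|\le L$ (feeding into Lemma~\ref{lem:second_order_taylor_expansion}) provides; the remainder of the argument is routine bookkeeping with elementary symmetric functions and second moments, so I do not anticipate a real obstacle.
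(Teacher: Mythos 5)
Your proof is correct, and its skeleton is the same as the paper's: write $\|(I+\epsilon E)(e_1\wedge\cdots\wedge e_k)\|^2=1+\epsilon X_1+\epsilon^2X_2+O(\epsilon^3)$ with an a.s.\ bounded remainder, feed this into Lemma~\ref{lem:second_order_taylor_expansion}, and evaluate the moments from the defining properties of $\mc M_L$. Where you differ is in how $X_1,X_2$ are extracted. The paper expands the wedge directly in the basis $\{e_{i_1}\wedge\cdots\wedge e_{i_k}\}$ and must argue which coefficients can contribute at order $\epsilon^2$ to the sum of squares (only $e_1\wedge\cdots\wedge e_k$ itself and the wedges with one index $i\le k$ replaced by some $\ell>k$, the latter supplying the $\sum_{i\le k}\sum_{\ell>k}E_{\ell i}^2$ term). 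You instead route everything through the Gram identity $\|v_1\wedge\cdots\wedge v_k\|^2=\det\bigl(\langle v_i,v_j\rangle\bigr)$ and the principal-minor expansion of $\det(I+M)$; your $X_2=\Tr T+\tfrac12\bigl((\Tr S)^2-\Tr(S^2)\bigr)$ looks different from the paper's but has the same expectation $k(d+1-k)$, and your final arithmetic $kd-k(k+1)=k(d-k-1)$ agrees. The Gram-determinant route is arguably tidier: it compresses the case analysis of contributing wedge coefficients into one determinant identity, and uniformity of the $O(\epsilon^3)$ error over $\mc M_L$ is immediate because $\det G$ is a polynomial in $\epsilon$ whose coefficients are bounded in terms of $L$ and $d$ alone. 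The paper's direct expansion, by contrast, makes visible exactly which entries of $E$ (namely $E_{\ell i}$ with $\ell>k$, $i\le k$) are responsible for the off-block contribution $k(d-k)$.
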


\begin{proof}
Let $\mu\in\mc M_L$ and let $\V$ be the random variable 
$(I+\epsilon E)(e_1\wedge\ldots\wedge e_k)$, where $E$
is distributed as $\mu$. That is,
$$
\V=((I+\epsilon E)e_1)\wedge\ldots\wedge ((I+\epsilon E)e_k)
$$
Expanding the right side, we have
\begin{align*}
\V=&(e_1+\epsilon E_{11}e_1+\epsilon E_{21}e_2+\ldots+\epsilon E_{d1}e_d)\\
&\wedge(e_2+\epsilon E_{12}e_1+\epsilon E_{22}e_2+\ldots+\epsilon E_{d2}e_d)\\
&\wedge\quad\vdots\\
&\wedge (e_k+\epsilon E_{1k}e_1+\epsilon E_{2k}e_2+\ldots+\epsilon E_{dk}e_d).
\end{align*}
We need to compute $\E\log\|\V\|=\frac 12\E\log\|\V\|^2$. Recall that, by definition, 
if we write $\V$ as $\sum_{i_1<\ldots<i_k} a_{i_1\cdots i_k} e_{i_1}\wedge \cdots \wedge e_{i_k}$,
then $\|\V\|^2=\sum_{i_1<\ldots<i_k} a_{i_1\cdots i_k}^2$. Thus, in view of Lemma 
\ref{lem:second_order_taylor_expansion}, to compute $\E{\log\|\V\|^2}$ it suffices 
to study the sum $\sum_{i_1<\ldots<i_k} a_{i_1\cdots i_k}^2$ up to an
error of order $O(\epsilon^3)$.

We now examine the cross terms arising from the above expression for $\V$ with this in mind. 
Any term in $a_{i_1\ldots i_k}$ that is a multiple of $\epsilon^3$ or a higher power
may be discarded; similarly if $a_{i_1\ldots i_k}$ has no constant term, then any term
of order $\epsilon^2$ may be discarded since we sum the squares of the coefficients. 
Thus we see that there are two kinds of terms in $\V$ that give a contribution 
of order $\epsilon^2$ or greater to $\|\V\|^2$:
\[
e_1\wedge \cdots \wedge e_k\text{ and } e_1\wedge\ldots\wedge e_{i-1}\wedge 
e_\ell\wedge e_{i+1}\wedge\ldots\wedge e_k \text{ where }\ell>k.
\]
Let us examine these two cases:
\begin{enumerate}
    \item The coefficient of $e_1\wedge \cdots \wedge e_k$ (up to order $\epsilon^2$) is 
    \[
1+\epsilon \sum_{i=1}^kE_{ii}+
\epsilon^2\left(\sum_{i<j}(E_{ii}E_{jj}+(-1)^{j-i}
E_{ji}E_{ij})\right).
    \]
    \item
    Next, each term of the form 
    $e_1\wedge\ldots\wedge e_{i-1}\wedge e_\ell\wedge e_{i+1}\wedge\ldots\wedge e_k$
    up to order $\epsilon$ arises in a unique way: from each term of the wedge product
    expression for $\V$, we select the principal term except that in the $i$th term we select
    the $\epsilon E_{\ell i}e_\ell$ term.
\end{enumerate}

Squaring the coefficients and summing, we observe that $\|\V\|^2=
1+\epsilon X_1+\epsilon^2 X_2+O(\epsilon^3)$, where
\begin{align*}
    X_1&=2\sum_{i=1}^k E_{ii}; \text{\quad and}\\
    X_2&=\left(\sum_{i=1}^k E_{ii}\right)^2+2\sum_{i<j}\Big(E_{ii}E_{jj}+
    (-1)^{j-i}E_{ji}E_{ij}\Big)+\sum_{i=1}^k\sum_{\ell=k+1}^dE_{\ell i}^2.
\end{align*}
We observe from the expectation and covariance properties that $\E X_1=0$, $\E X_1^2=4k$
and $\E X_2=k+0+k(d-k)=k(d+1-k)$.  Applying Lemma \ref{lem:second_order_taylor_expansion}, we obtain
$$
\E\log\|\V\|^2=k(d-1-k)\epsilon^2+O(\epsilon^3).
$$

That gives
$$
\E\log\|\V\|=\tfrac 12\E\log\|\V\|^2=\frac{k(d-k-1)}2\epsilon^2+O(\epsilon^3),
$$
as required.
\end{proof}

\begin{cor}\label{cor:rank_1}
    For any $\mu\in\mc M_L$ and
    any rank 1 element, $v_1\wedge\ldots\wedge v_k$, of $\bigwedge^k\R^d$ of norm 1,
    $$
    \E\log\|(I+\epsilon E)(v_1\wedge\ldots\wedge v_k)\|=\frac{k(d-k-1)\epsilon^2}{2}+O(\epsilon^3).
    $$
\end{cor}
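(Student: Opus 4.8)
The plan is to reduce the corollary to Lemma~\ref{lem:Taylor}, which is precisely the assertion for the coordinate $k$-vector $e_1\wedge\dots\wedge e_k$, by exploiting the conjugation-invariance of the class $\mc M_L$ from Lemma~\ref{lem:conj}. First I would normalize the given $k$-vector. If $k=d$, then $\bigwedge^d\R^d$ is one-dimensional and any norm-$1$ element is $\pm e_1\wedge\dots\wedge e_d$, so the statement is immediate from Lemma~\ref{lem:Taylor} since the norm is sign-insensitive. If $k<d$, note that a nonzero decomposable $k$-vector $v_1\wedge\dots\wedge v_k$ has the $v_i$ spanning a $k$-dimensional subspace $W$, and expressing the $v_i$ in an orthonormal basis of $W$ shows $v_1\wedge\dots\wedge v_k=c\,w_1\wedge\dots\wedge w_k$ for an orthonormal system $w_1,\dots,w_k$ with $|c|=\|v_1\wedge\dots\wedge v_k\|=1$; absorbing the sign into $w_1$, I may assume $c=1$. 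Then I extend to an orthonormal basis $w_1,\dots,w_d$ of $\R^d$, flipping the sign of $w_d$ if necessary, so that $O\coloneqq[\,w_1\mid\dots\mid w_d\,]$ lies in $\SO(d)$ and satisfies $Oe_i=w_i$ for all $i$.

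Next I would push the rotation through the linear action. With $E'=O^{-1}EO$, the identity $(I+\epsilon E)O=O(I+\epsilon E')$ gives, on the $k$th exterior power,
\[
(I+\epsilon E)(v_1\wedge\dots\wedge v_k)=(\textstyle\bigwedge^k O)\bigl((I+\epsilon E')(e_1\wedge\dots\wedge e_k)\bigr),
\]
and since $\bigwedge^k O$ is an orthogonal transformation of $\bigwedge^k\R^d$ we conclude that $\|(I+\epsilon E)(v_1\wedge\dots\wedge v_k)\|=\|(I+\epsilon E')(e_1\wedge\dots\wedge e_k)\|$. Finally, if $E$ has law $\mu\in\mc M_L$, then $E'=O^{-1}EO$ has law $(O^{-1})_*\mu$, which again belongs to $\mc M_L$ by Lemma~\ref{lem:conj} (its proof uses only $OO^T=I$ and so applies verbatim to $O^{-1}$). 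Taking expectations and invoking Lemma~\ref{lem:Taylor} for the measure $(O^{-1})_*\mu\in\mc M_L$ yields $\E\log\|(I+\epsilon E)(v_1\wedge\dots\wedge v_k)\|=\tfrac{k(d-k-1)}{2}\epsilon^2+O(\epsilon^3)$, as claimed.

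I do not expect a genuine obstacle here. The only two points needing a line of care are: ensuring the rotation can be taken in $\SO(d)$ rather than merely $O(d)$ — handled by treating $k=d$ separately and by the freedom to flip $w_d$ when $k<d$ — and checking that the $O(\epsilon^3)$ in Lemma~\ref{lem:Taylor} is uniform over $\mc M_L$, which holds because the implied constant there depends only on $d$, $k$, and the support bound $L$, and conjugation by an orthogonal matrix preserves both the class $\mc M_L$ and the bound $\|E\|\le L$.
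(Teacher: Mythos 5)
Your proposal is correct and follows essentially the same route as the paper: reduce to $e_1\wedge\dots\wedge e_k$ via an orthogonal change of frame, use that $\bigwedge^k O$ is norm-preserving, invoke the conjugation-invariance of $\mc M_L$ (Lemma \ref{lem:conj}), and apply Lemma \ref{lem:Taylor}. Your extra care about realizing the rotation in $\SO(d)$ (the $k=d$ case and the sign flip of $w_d$) and about uniformity of the $O(\epsilon^3)$ constant over $\mc M_L$ fills in details the paper leaves implicit, but the argument is the same.
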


\begin{proof}
    By transitivity of the action of $\SO(d)$ on orthogonal $k$-frames, there
    exists $O\in \SO(d)$ such that $O(e_1\wedge\ldots\wedge e_k)=v_1\wedge\ldots\wedge v_k$. 
    Thus 
    \begin{align*}
    \|(I+\epsilon E)(v_1\wedge \ldots\wedge v_k)\|
    &=\|((I+\epsilon E)O)(e_1\wedge \ldots\wedge e_k)\|\\
    &=\|(O^{-1}(I+\epsilon E)O)(e_1\wedge\ldots\wedge e_k)\|\\
    &=\|(I+\epsilon O^{-1}EO)(e_1\wedge\ldots\wedge e_k)\|.
\end{align*}
    Since $(O^{-1})_*\mu\in\mc M_L$ by Lemma \ref{lem:conj}, the result follows from 
    Lemma \ref{lem:Taylor}.
\end{proof}

\begin{thm}
    Let $(\mu_n)_{n\in\N}$ be an arbitrary sequence of elements of $\mc M_L$ and
    let $(E_n)_{n\in\N}$ be an independent sequence of matrix random variables, 
    with $E_n$ having distribution $\mu_n$. 
    For $\epsilon>0$, let $M_n=(I+\epsilon E_n)\cdots (I+\epsilon E_1)$.
    Then a.s. for each $1\le k\le d$,
    \begin{equation}\label{eqn:growth_rate_k_d-2k}
    \limsup_{n\to\infty} \left|\frac 1n\log\sigma_k(M_n)-\frac {(d-2k)\epsilon^2}{2}\right|=O(\epsilon^3).
    \end{equation}
    In particular, for each $1\le k<d$,
    $$
    \limsup_{n\to\infty} \left|\frac 1n\log\frac{\sigma_k(M_n)}{\sigma_{k+1}(M_n)}-\epsilon^2\right|=O(\epsilon^3).
    $$
\end{thm}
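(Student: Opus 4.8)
The plan is to reduce the theorem to the growth of $\big\|\bigwedge^k M_n\big\|=\sigma_1(M_n)\cdots\sigma_k(M_n)$, exploiting two facts: this norm is attained on a \emph{decomposable} vector, and $\bigwedge^k(I+\epsilon E)$ carries decomposable vectors to decomposable vectors. First I would record the identity $\log\sigma_k(M_n)=\log\big\|\bigwedge^k M_n\big\|-\log\big\|\bigwedge^{k-1}M_n\big\|$ (reading $\bigwedge^0$ as the scalars, of norm $1$) together with the elementary computation $\tfrac12 k(d-k-1)-\tfrac12(k-1)(d-k)=\tfrac12(d-2k)$. These show that \eqref{eqn:growth_rate_k_d-2k} --- and then, subtracting the cases $k$ and $k+1$, the ratio statement --- follows from the claim that, almost surely and for every $1\le k\le d$,
\[
\limsup_{n\to\infty}\Big|\tfrac1n\log\big\|\textstyle\bigwedge^k M_n\big\|-\tfrac{k(d-k-1)\epsilon^2}{2}\Big|=O(\epsilon^3).
\]
Since the $\mu_n\in\mc M_L$ need not have an absolutely continuous component, Proposition \ref{prop:growth_in_expectations_implies_almost_sure} does not apply here, and I would instead argue directly via a martingale strong law.

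For the lower bound I would track a single decomposable vector. Put $v_0=e_1\wedge\cdots\wedge e_k$, $V_n=(\bigwedge^k M_n)v_0$, and $w_{j-1}=V_{j-1}/\|V_{j-1}\|$ (each $V_n$ is decomposable); then
\[
\log\|V_n\|=\sum_{j=1}^n\log\big\|\textstyle\bigwedge^k(I+\epsilon E_j)(w_{j-1})\big\|.
\]
Because $w_{j-1}$ is $\sigma(E_1,\dots,E_{j-1})$-measurable and $E_j$ is independent of it, Corollary \ref{cor:rank_1} (applied to $\mu_j\in\mc M_L$ and the unit decomposable vector $w_{j-1}$) shows that the conditional expectation of the $j$th summand equals $\tfrac{k(d-k-1)\epsilon^2}{2}$ up to an error $O(\epsilon^3)$ that is \emph{uniform} in $j$ and $w_{j-1}$ (the Taylor constants in Lemmas \ref{lem:second_order_taylor_expansion} and \ref{lem:Taylor} depend only on $L$ and $d$). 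Each summand lies in $[\,k\log(1-\epsilon L),\,k\log(1+\epsilon L)\,]$, so the martingale of centred increments has increments of size $O(\epsilon)$, whence $\tfrac1n(\text{martingale})\to0$ a.s.; dividing the telescoping sum by $n$ gives $\limsup_n\big|\tfrac1n\log\|V_n\|-\tfrac{k(d-k-1)\epsilon^2}{2}\big|\le C\epsilon^3$ a.s. Since $\big\|\bigwedge^k M_n\big\|\ge\|V_n\|$, this is the lower half of the claim.

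The main obstacle is the upper bound, because $\big\|\bigwedge^k M_n\big\|$ is a supremum over all decomposable unit vectors rather than one orbit. I would get past this with the Hilbert--Schmidt estimate
\[
\big\|\textstyle\bigwedge^k M_n\big\|^2\le\big\|\textstyle\bigwedge^k M_n\big\|_{\mathrm{HS}}^2=\sum_{|I|=k}\big\|(\textstyle\bigwedge^k M_n)e_I\big\|^2\le\binom dk\max_{|I|=k}\big\|(\textstyle\bigwedge^k M_n)e_I\big\|^2,
\]
where $e_I=e_{i_1}\wedge\cdots\wedge e_{i_k}$ ranges over the standard orthonormal basis of $\bigwedge^k\R^d$. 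Each $e_I$ is a unit decomposable vector, so the martingale argument above applies verbatim with $v_0$ replaced by $e_I$; as there are finitely many such $I$ (and finitely many $k$), one full-measure event handles them all, and on it $\limsup_n\tfrac1n\log\max_{|I|=k}\|(\bigwedge^k M_n)e_I\|=\max_{|I|=k}\limsup_n\tfrac1n\log\|(\bigwedge^k M_n)e_I\|\le\tfrac{k(d-k-1)\epsilon^2}{2}+C\epsilon^3$. Since $\tfrac1{2n}\log\binom dk\to0$, the displayed inequality yields $\limsup_n\tfrac1n\log\big\|\bigwedge^k M_n\big\|\le\tfrac{k(d-k-1)\epsilon^2}{2}+C\epsilon^3$, and combined with the lower bound this proves the claim and the theorem. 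All of this is meaningful for $\epsilon$ small enough that $\epsilon L<1$ (so that $I+\epsilon E_j$, and hence $M_n$, is invertible and the singular values are positive) and Lemma \ref{lem:second_order_taylor_expansion} applies --- precisely the regime in which the $O(\epsilon^3)$ conclusion has content.
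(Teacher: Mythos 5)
Your proof is correct, and it takes a genuinely different route from the paper. The paper's proof reduces to the stationary (IID) case: it invokes the existence of Lyapunov exponents, applies Furstenberg's formula with a stationary measure $\nu$ on $\Gr(k,d)$, and uses Corollary \ref{cor:rank_1} to note that the inner integral is constant up to $O(\epsilon^3)$ over all of $\Gr(k,d)$, so the unknown $\nu$ is harmless. You instead use Corollary \ref{cor:rank_1} as a uniform conditional-expectation estimate for the increments $\log\bigl\|\bigwedge^k(I+\epsilon E_j)(w_{j-1})\bigr\|$ along the orbit of a decomposable vector, and run a martingale strong law; the Hilbert--Schmidt bound over the $\binom dk$ basis vectors $e_I$ converts the single-orbit estimate into a matching upper bound for the operator norm. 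Both arguments hinge on the same key fact --- the uniformity of the expansion in Corollary \ref{cor:rank_1} over all unit decomposable vectors --- but your version buys something real: it proves the theorem as actually stated, for an \emph{arbitrary} sequence $(\mu_n)$ in $\mc M_L$, whereas the paper's appeal to Lyapunov exponents and Furstenberg's formula is literally valid only for a constant sequence (the paper even remarks ``here we are studying a stationary matrix product''). Your reduction $\log\sigma_k(M_n)=\log\|\bigwedge^k M_n\|-\log\|\bigwedge^{k-1}M_n\|$, the arithmetic $\tfrac12k(d-k-1)-\tfrac12(k-1)(d-k)=\tfrac12(d-2k)$, and the bounded-increment check (valid once $\epsilon L<1$) are all sound, so the argument is complete and, if anything, more self-contained than the one in the paper.
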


\begin{proof}
To begin, note that here we are studying a stationary matrix product given by a measure 
$\mu$ on $\GL(d,\R)$. 
Thus, in this case Lyapunov exponents exist.
As the Lyapunov exponents are the exponential grow rate of these singular values, 
equation \eqref{eqn:growth_rate_k_d-2k} will follow once we estimate the Lyapunov 
exponents as $n^{-1}\log \sigma_k(M_n)$ converges to $\lambda_k$. Hence it suffices to estimate $\lambda_k$.

Consider a stationary measure $\nu$ for the dynamics on $\Gr(k,d)$. 
Let $\Phi_k(A,V)\colon \GL(d,\R)\times \Gr(k,d)\to \R$ be given by 
\[
\Phi_k(A,v)=\log \frac{\|A(v_1\wedge \cdots \wedge v_k)\|}{\|v_1\wedge \cdots \wedge v_k\|},
\]
where $v_1\wedge \cdots \wedge v_k$ is any wedge representing the subspace $V\in \Gr(k,d)$. 
Then Furstenberg's formula \cite{goldsheid1989lyapunov} gives that there is some 
stationary measure $\nu$ such that:
\[
\lambda_1+\cdots+\lambda_k=\int_{\Gr(k,d)}\int_{\GL(d,\R)} \Phi(A,V)\,d\mu(A)\,d\nu(V).
\]
But note now that by the Corollary \ref{cor:rank_1}, we have already evaluated the 
inner integral for all rank-$1$ tensors, so the result follows by taking the 
difference between $\lambda_1+\cdots+\lambda_{k}$ and $\lambda_1+\cdots+\lambda_{k-1}$.
\end{proof}

\section{Fiber measure density calculations}\label{app:coarea}
We include here a brief justification of \eqref{eq:xi3fibredensity} and \eqref{eq:xi4fibredensity}.
First for the $\Xi_3$ fiber density, 
there is a fixed orthonormal basis $v_1,\ldots,v_d$. The partition $\Xi_2$ 
then partitions $M_{d\times d}(\R)$ according to the values of $Av_1,\ldots Av_{k-1},
Av_{k+2},\ldots,Av_d$. It also specifies 
$\Pi_{W_{k-1}}Av_k$ and $\Pi_{W_{k-1}}Av_{k+1}$. The fiber measure on leaves is just Lebesgue
measure. Since Lebesgue measure on the matrices is
invariant under pre- and post- multiplication by orthogonal matrices, we may assume without 
loss of generality that $v_1,\ldots,v_d$ form an orthonormal coordinate basis and that 
$\lin(w_1,\ldots,w_{k-1})$ is the linear span of the first $k-1$ orthonormal coordinate
vectors. That is, each $\xi_2$ fiber specifies all entries of $A$ except 
$A_{ij}$ for $i=k,k+1$ and $j\ge k$ which are completely unspecified. Fixing a $\xi_2$-fiber,
we restrict attention to the free $2(d-k+1)$ coordinates. Two matrices belong to the same 
$\xi_3$-fiber if they share the same linear span of these two $(d-k+1)$-vectors. 
We therefore study the map sending a pair of $(d-k+1)$-vectors to their span, given by
$F\colon (\R^{d-k+1})^2\to \Gr_2(\R^{d-k+1})$ given by
$F(u,v)=\lin(u,v)$. 

We then use the co-area formula \cite{Chavel}:
$$
\int f(x)\,dm=\int_{\Gr_2}\int_{F^{-1}(W)} \frac{f(x)}{J_\text{norm}F(x)}\,dm_W(x)\,dm_{\Gr_2}(W),
$$
where $m$ is Lebesgue measure on $(\R^{d-k+1})^2$; $m_{\Gr_2}$ is Haar measure on $\Gr_2(\R^{d-k+1})$;
$m_W$ is Lebesgue measure on $W$; and $J_\text{norm}F$ is the normal Jacobian, 
i.e.\ the determinant of the Jacobian of $F$
restricted to the orthocomplement of the kernel of the derivative. From the formula,
the fiber measure density is proportional to $1/J_\text{norm}F(x)$. 

Applying an orthogonal change of coordinates, it suffices to compute $J_\text{norm}F(u_0,v_0)$ where 
$u_0,v_0\in\lin(e_1,e_2)$. We use a standard chart for the Grassmannian in a neighborhood of 
$\lin(e_1,e_2)$: a 2-plane $V$ in the neighborhood is parameterized 
by a pair of vectors $w,x$ in $\lin(e_3,\ldots,e_{d-k+1})$
where $e_1+w$ and $e_2+x$ are the unique elements of $V\cap \{x\colon x_1=1,x_2=0\}$ and 
$V\cap \{x\colon x_1=0,x_2=1\}$ respectively. In other words, we represent this 
subspace $V$ as a graph of a map $L_V\colon \lin(e_1,e_2)\to \lin(e_1,e_2)^{\perp}$, 
and these vectors $w$ and $x$ are equal to $L_V(e_1)$ and $L_V(e_2)$, respectively. 

Hence, in these charts we can express $F$ as a map from 
$\R^{2d-2k+2}$ to $\R^{2d-2k-2}$ mapping a pair of vectors $(u,v)$ to the pair $(w,x)$ describing the
subspace $\lin(u,v)$. 

Since $u_0,v_0$ are in $\lin(e_1,e_2)$, $F(u_0,v_0)=(0,0)$. To compute the normal Jacobian, 
notice that if $u_0$ or $v_0$ are 
perturbed in the $\lin(e_1,e_2)$ plane, the image under $F$ remains unchanged,  
hence they both must lie in $\ker DF$. In fact, this is the entire kernel, 
so we now compute the derivative on its orthogonal complement. We therefore compute the derivative
when $u_0$ and $v_0$ are perturbed in the $\lin(e_3,\ldots,e_{d-k+1})$ subspace. Let $\alpha$, $\beta$,
$\gamma$ and $\delta$ be such that $\alpha u_0+\beta v_0=e_1$, $\gamma u_0+\delta v_0=e_2$. 
the map $F$ restricted to $(u_0+\lin(e_3,\ldots,e_{d-k+1}))\times
(v_0+\lin(e_3,\ldots,e_{d-k+1}))$, with respect to the chart above is linear,
given by $F(u_0+y,v_0+z)=(\alpha y+\beta z,\gamma y+\delta z)$. The Jacobian matrix is therefore
$$
\begin{pmatrix}
\alpha&&&\gamma&&\\
&\ddots&&&\ddots&\\
&&\alpha&&&\gamma\\
\beta&&&\delta&&\\
&\ddots&&&\ddots&\\
&&\beta&&&\delta\\
\end{pmatrix};
$$
and the Jacobian determinant is $|\alpha\delta-\beta\gamma|^{d-k-1}$. Since 
$(\alpha u_0+\beta v_0)\wedge (\gamma u_0+\delta v_0)=e_1\wedge e_2$, 
we see that $\|u_0\wedge v_0\|=1/|\alpha\delta-\beta\gamma|$,
so that $1/J_\text{norm}F(u_0,v_0)=\|u_0\wedge v_0\|^{d-k-1}$. 

To compute the $\Xi_4$ fiber density, we again assume that $v_1,\ldots,v_d$
is the coordinate basis and $w_1,\ldots,w_{k-1}\in\lin(e_1,\ldots,e_{k-1})$.
We further assume without loss of generality that $U=\lin(e_k,e_{k+1})$, $y=e_k$ and $y^\perp=e_{k+1}$. 
In terms of the matrix, all the entries are specified except $A_{ij}$ for $i,j\in\{k,k+1\}$, which
is completely unspecified. We write $\bar A$ for the $2\times 2$ submatrix with these entries. 
The partition element $\xi_4(\theta)$ consists of those $A$ such that
$\Pi_UA(\cos\theta v_k+\sin\theta v_{k+1})$ is parallel to $y$. That is, those $A$ such that
$\langle y^\perp,A(\cos\theta v_k+\sin\theta v_{k+1})\rangle=0$. 
Since the only elements that are varying are those in the submatrix, the partition is equivalent
to a measurable partition $\bar\Xi_4$ of the submatrices. 
Writing $\bar A=\left(\begin{smallmatrix}a&b\\c&d\end{smallmatrix}\right)$, the partition elements
are the level sets of $\bar F(\bar A)=-\tan^{-1}(\frac cd)$. As before, the density multiplier is the 
normal Jacobian. 
We verify that 
$$
\bar F^{-1}(\theta)=\left\{\begin{pmatrix}a&b\\-c\sin\theta&c\cos\theta\end{pmatrix}\colon a,b,c\in\R\right\}.
$$
The normal Jacobian to $F$ at this point is the derivative of $F$ in the 
$\left(\begin{smallmatrix}0&0\\\cos\theta&\sin\theta\end{smallmatrix}\right)$ direction. 
Hence, one can see that the reciprocal normal Jacobian is $|c|$, which can be written
in a coordinate-free way as $|\langle y^\perp,A(-\sin\theta v_k+\cos\theta v_{k+1})\rangle$.
This allows one to deduce \eqref{eq:xi4fibredensity} as claimed.
\printbibliography

\end{document}